\def\ps@pprintTitle{%
 \let\@oddhead\@empty
 \let\@evenhead\@empty
 \def\@oddfoot{}%
 \let\@evenfoot\@oddfoot}
\newtheorem{theorem}{Theorem}[section]
\newtheorem{lemma}[theorem]{Lemma}
\newtheorem{proposition}[theorem]{Proposition}
\newtheorem{corollary}[theorem]{Corollary}
\theoremstyle{definition}
\newtheorem{definition}[theorem]{Definition}
\newtheorem{example}[theorem]{Example}
\newtheorem{remark}[theorem]{Remark}
\def\DM{\operatorname{DM}}
\def\Nat{{\mathbb N}}
\def\wiel{\operatorname{Wi}}
\def\bzero{{\mathbf 0}}
\def\bunity{{\mathbf 1}}
\def\0{{\mathbf 0}}
\def\1{{\mathbf 1}}
\def\N{{\mathbb N}}
\def\R{{\mathbb R}}
\def\Rmax{\R_{\max}}
\def\Rp{\Rmax}
\def\Rpnn{\Rp^{n\times n}}
\def\digr{{\mathcal D}}
\def\crit{{\mathcal G}^c}
\def\subcrit{{\mathcal G}}
\def\mN{\mathcal{N}}
\def\Z{\mathbb{Z}}
\def\Wi{\operatorname{Wi}}
\newcommand{\walkslen}[3]{\mathcal{W}^{#3}(#1\to #2)}
\newcommand{\walkslennode}[4]{\mathcal{W}^{#3}(#1\xrightarrow{#4} #2)}
\newcommand{\walksnode}[3]{\mathcal{W}(#1\xrightarrow{#3} #2)}
\newcommand{\walks}{\mathcal{W}}
\newcommand{\trans}[1]{#1^T}
\newcommand{\old}[1]{}
\def\bnacht{B_{\operatorname{N}}}
\def\g{\operatorname{g}}
\colorlet{darkgreen}{black!50!green}
\begin{document}

%
\setcounter{footnote}{1}

\begin{frontmatter}
\setcounter{footnote}{1}
\title{On the Tightness of Bounds for Transients of Weak CSR Expansions and Periodicity Transients of 
Critical Rows and Columns of Tropical Matrix Powers\tnoteref{t1}}
\tnotetext[t1]{This work was partially supported by ANR
Perturbations grant (ANR-10-BLAN 0106). 
The work of S. Sergeev was also supported by EPSRC grant EP/P019676/1.}

\author[rvt1]{Glenn Merlet}
\ead{glenn.merlet@univ-amu.fr}

\author[rvt2]{Thomas Nowak\fnref{fntn}}
\ead{thomas.nowak@lri.fr}

\author[rvt3]{Serge{\u\i} Sergeev\corref{cor}}
\ead{sergiej@gmail.com}

\address[rvt1]{Aix Marseille Univ, CNRS, Centrale Marseille, I2M, Marseille, France}
\address[rvt2]{Universit\'e Paris-Saclay, CNRS, Orsay, France}
\address[rvt3]{University of Birmingham, School of Mathematics, 
Edgbaston B15 2TT, UK.}

\cortext[cor]{Corresponding author. Email: sergiej@gmail.com}

\fntext[fntn]{T. Nowak was with the {\'E}cole Normale Sup\'erieure, Paris,
France, when this work was initiated.}

\begin{abstract}
We study the transients of matrices in max-plus algebra.
Our approach is based on the weak CSR expansion.
Using this expansion, the transient can be expressed by 
$\max\{T_1,T_2\}$, where~$T_1$ is the weak CSR threshold
and~$T_2$ is the time after which the purely pseudoperiodic CSR terms start to
dominate in the expansion. 
Various bounds have been derived for~$T_1$ and~$T_2$, naturally leading to the 
question which matrices, if any, attain these bounds.

In the present paper we characterize the matrices attaining two particular
bounds on~$T_1$, which are
generalizations
of the bounds of Wielandt and Dulmage-Mendelsohn on the indices of
non-weighted digraphs.
This also leads to a characterization of tightness for the same bounds on the
transients of critical rows and columns.
The characterizations themselves are generalizations of those
for the non-weighted case.
\end{abstract}

\begin{keyword}
Max-plus, matrix powers, transient, periodicity, digraphs. 
\vskip0.1cm {\it{AMS Classification:}} 15A18, 15A23, 90B35
\end{keyword}
\end{frontmatter}


\section{Introduction}

Max-plus algebra is a version of linear algebra developed over the max-plus
semiring, which is the set $\Rmax=\R\cup\{-\infty\}$
equipped with 
the operations $a\oplus b:=\max\{a,b\}$ (additive) and $a\otimes b:=a+b$ 
(multiplicative). 
This semiring has a zero $\bzero:=-\infty$, neutral with respect to 
$\oplus$, and a unity $\bunity=0$, neutral with respect to $\otimes$. The multiplicative
operation is invertible, that is, for each $\alpha\neq\bzero$ there exists an element 
$\alpha^-=-\alpha$ such that $\alpha^-\otimes\alpha=\alpha\otimes \alpha^-=\bunity$. 

These arithmetical operations are extended to matrices and vectors in the usual way.
Matrix addition is defined by $(A\oplus B)_{i,j}=a_{i,j}\oplus b_{i,j}$ for two matrices 
$A=(a_{i,j})$ and $B=(b_{i,j})$ of equal dimensions, and matrix multiplication by 
$(A\otimes B)_{i,j}=\bigoplus_{k=1}^l a_{ik}\otimes b_{kj}$ for two matrices $A$ and $B$ of compatible dimensions. Here we are  
interested in tropical matrix powers:
\begin{equation}
\label{e:tropicalpowers}
A^t=\overbrace{A\otimes A\otimes A\cdots\otimes A}^{t\text{ times}},\quad t\geq 1,
\enspace.
\end{equation}
assuming that $A^0=I$, the max-plus identity matrix, in which all diagonal entries are equal to $\bunity=0$
and all off-diagonal entries are equal to $\bzero=-\infty$.

As we see, matrix powers are easy to define for natural $t$. As for negative $t$, the problem is 
that the set of max-plus matrices for which inverse exists is very scarce (see, e.g.,\cite{But} Theorem 1.1.3
for a complete description).  However, we will make use of invertible max-plus diagonal matrices: matrices
$D=(d_{i,j})$ in which $d_{i,i}$ are real for all $i$ and $d_{i,j}=-\infty$ when $i\neq j$. The inverse of 
$D$, denoted by $D^-$, is also a diagonal matrix with diagonal entries equal to $d_{i,i}^-$ for all $i$, 
so that we have $D\otimes D^-=D^-\otimes D=I$.  

In what follows, the multiplication sign $\otimes$ will be always omitted in the case 
of matrix multiplication, but always kept in the case of multiplication by scalars. In particular, we 
write $\lambda^{\otimes t}=\overbrace{\lambda\otimes \ldots\otimes
\lambda}^{t\text{ times}}=t\lambda$
and $\lambda^{\otimes 1/t}=\frac1t\lambda$ for $\lambda\in\Rmax$.

The fundamental result on tropical matrix powers~\cite{Cohen+} states
that if $A$ is irreducible 
then
there exist a real $\lambda$ and integers $\gamma$ and $T$
such that
\begin{equation}
\label{e:period} 
\forall t\geq T\colon\quad
A^{t+\gamma}=\lambda^{\otimes \gamma}\otimes A^t.
\end{equation}
The smallest nonnegative~$T$ for which~\eqref{e:period}
holds is called the {\em transient } of matrix~$A$, 
denoted by $T(A)$. 
The transient can be shown to be independent of the choice of~$\lambda$
and~$\gamma$.
In fact, $\lambda$ is the largest mean cycle weight in the weighted digraph
described by~$A$.
Bounds on
transients have been studied by many authors, e.g., Hartmann and
Arguelles~\cite{HA-99}, Bouillard and Gaujal~\cite{BG-00}
Soto~y Koelemeijer~\cite{SyK:03},  Akian et al.~\cite{AGW-05},
Charron-Bost et al.~\cite{CBFN-17}, and the authors~\cite{MNS}.
The time behavior and complexity of several real systems can be reduced to
determining the transient of max-plus matrices.
These applications include
communication networks~\cite{BH-00},
cyclic scheduling~\cite{CMQV-89},
link-reversal algorithms~\cite{CFWW-15},
network synchronizers~\cite{ER-97},
as well as transportation and manufacturing systems~\cite{BCOQ-92}.

An analog of the ultimate pseudoperiodicity of irreducible max-plus matrices
for real-valued non-negative matrices is provided by the Perron-Frobenius
theorem.
The largest mean cycle weight~$\lambda$ is analogous to the spectral radius of
non-negative matrices.
The spectral radius of stochastic matrices, i.e., non-negative matrices whose row sums are all~$1$, is equal to~$1$.
The sequence of powers of an irreducible stochastic matrix is known to converge
to a rank-$1$ stochastic matrix and the rate of convergence is known to
depend on the matrix's spectral gap, i.e., the difference between its largest
and its second largest eigenvalue.
Similarly, the transient of an irreducible max-plus matrix depends on the difference between the largest and the second-largest mean cycle weight.

If all matrix entries in $A$ are restricted to $\bzero=-\infty$ or $\bunity=0$ then 
the tropical matrix algebra becomes Boolean matrix algebra (i.e., linear algebra over the Boolean semiring),
and the associated digraph becomes unweighted.
Powers of Boolean matrices have been thoroughly studied in combinatorics (see, e.g.,~\cite{BR}),
and various bounds on their transient of periodicity, called index or exponent in these cases, have been obtained.
One well-known application is the Frobenius coin problem, which can be seen as calculating the transient of a specifically constructed graph.
For general connected graphs, Wielandt~\cite{Wie-50} proved the bound~$(n-1)^2+1$ and Dulmage and Mendelsohn~\cite{DM-64} proved $g(n-2)+n$,
where~$g$ is the girth of the graph.


Note that the same problem can be considered locally: for each pair $i,j$,
given $\gamma$ and $\lambda$ that work for \eqref{e:period}, find
the minimal $T_{i,j}$ such that 
\begin{equation}
\label{e:periodij} 
\forall t\geq T_{i,j}\colon\quad
(A^{t+\gamma})_{i,j}=\lambda^{\otimes \gamma}\otimes (A^t)_{i,j}.
\end{equation}
Denoting that minimal $T_{i,j}$ by $T_{i,j}(A)$, we can also consider 
$\max_{i\in [n]} T_{i,j}(A)$ and $\max_{j\in [n]} T_{i,j}(A)$. These quantities are called the {\em transient of the $j$th column} of $A$ and the {\em transient of the $i$th row} of $A$, respectively. The bounds for such transients can 
be much lower than those for $T(A)$, as it was shown by~\cite{MNSS} in an important special case of critical 
rows and columns, i.e., in the case where the 
index of the row or column corresponds to a critical node (see Definition
\ref{def:lambdacrit} below). 

The eventual periodicity was reformulated and generalized
by Sergeev and Schneider~\cite{Ser-09,SS-11} via the concept of CSR expansions. They observed that in the case of 
eventual periodicity of $\{A^t\}_{t\geq 1}$ there is a big enough $T$ for which we have
\begin{equation}
\label{e:CSRperiod}
\forall t\geq T\colon \quad 
A^{t}=CS^tR,
\end{equation}
where $C,S$ and~$R$ are constructed from $A$ (see Definition~\ref{def:CSR} below) in such a way that $CS^tR$
is a purely pseudoperiodic sequence (i.e. $CS^{t+\gamma}R=\lambda^{\otimes\gamma}\otimes CS^{t}R$ for any~$t$
and some $\lambda$ and $\gamma$). The smallest $T$ for which~\eqref{e:CSRperiod} holds is equal to $T(A)$.

In~\cite{MNS}, we used this approach to unify and improve the known bounds
on~$T(A)$.
To this aim, we introduced the so-called 
{\it weak CSR expansions}. Observe that, given any $A\in\Rmax^{n\times n}$, there exists a  
big enough $T$ for which 
\begin{equation}
\label{e:weakcsr}
\forall t\geq T\colon\quad A^t=CS^tR\oplus B^t. 
\end{equation}
Here $C$, $S$, and $R$ are defined as in~\eqref{e:CSRperiod} and $B$ is a matrix obtained from $A$
by setting some entries to $\bzero$. The smallest number $T$ for which~\eqref{e:weakcsr} holds is 
called the weak CSR expansion threshold and denoted by $T_1(A,B)$.
In this paper we consider only the case $B=\bnacht$, 
where all entries with an index corresponding to a critical node are set to~$\0$ (see Definition~\ref{def:CSR} below). In this case $T_1(A,B_N)$ will be abbreviated to $T_1(A)$.

For irreducible \emph{Boolean} matrices, $T(A)$ and~$T_1(A,B_N)$ coincide. For reducible ones, they coincide, as soon as $T_1(A,B_N)\ge n$.
This allows to hope for extensions of bounds on exponents of graph to $T_1$ in the framework of max-plus algebra,
In particular, Wielandt and Dulmage-Mendelsohn bounds were extended in Theorem~4.1 of~\cite{MNS}.

However, no information was given in~\cite{MNS} on the question of which classes of matrices attain these bounds for $T_1(A,B)$. 
For the index of digraphs, those results are well-known. In particular, the digraph attaining the Wielandt bound is unique
up to renumbering the nodes, and the digraphs attaining the bound of Dulmage-Mendelsohn were studied  by Shao~\cite{Shao}.
The main results of the present paper are Theorem~\ref{t:mainres}, which characterizes all matrices~$A$ (or all weighted digraphs)
such that $T_1(A,\bnacht)$ attains the Dulmage-Mendelsohn bound, and Theorem~\ref{t:wiel}, which characterizes 
those attaining the Wielandt bound. Unfortunately, we have not been able to characterize the matrices that reach 
the bounds for other
choices of~$B$ studied in~\cite{MNS}.

On the other hand, in~\cite{MNSS} we had proved that the same bounds (and
others) also apply to the transient of a critical row or column of matrices.
Theorems~\ref{t:CritColDM} and~\ref{t:CritColWiel} characterize the matrices
for which these bounds are reached.

The paper is organized as follows.
After giving preliminary definitions and results in Section~\ref{sec:prelim},
we state our characterizations in Section~\ref{sec:thms} and give a quick overview of how to prove them.
The characterizations for the Dulmage-Mendelsohn bound on the weak CSR
threshold is then proved in
Section~\ref{sec:dm} and that for the Wielandt bound in Section~\ref{sec:wiel}.
Finally, in Section~\ref{sec:crit}, we prove the characterizations for the
transients of critical rows and columns.

\section{Preliminaries}\label{sec:prelim}

\subsection{Digraphs and Walks}
Most of the techniques for analyzing the max-plus matrix powers and their behavior are 
based on consideration of  walks on the
associated weighted digraphs. Hence it is essential 
to introduce the notion of weighted digraph associated with a given
max-plus matrix, as well as the related notions of walks, connectivity, girth and cyclicity.

\begin{definition}[Associated digraph]
\label{def:digr}
Let $A\in\Rmax^{n\times n}$. The {\em digraph associated with $A$}, denoted by 
$\digr(A)$, is defined
as the pair $(N,E)$ where $N=\{1,\ldots,n\}$ is the set of nodes
and $E=\{(i,j)\in N\times N\colon a_{i,j}\neq\bzero\}$ 
is the set of arcs connecting these nodes. Arc $(i,j)$ has {\em weight}
$a_{i,j}$.
\end{definition}

\begin{definition}[Walks]
\label{def:walks}
A sequence $i_0\dots i_k$, where $i_0,\ldots,i_k\in N$, is called a {\em walk} on a digraph 
$\digr=(N,E)$ if for any $s\colon 1\leq s\leq k$ the arc 
$(i_{s-1},i_{s})$ is in $E$.\\ 
For a walk $W=i_0\dots i_k$ we define the {\em length}
of the walk as $l(W):=k$, which is the number of letters in that walk
(as a sequence of letters) minus one.\\
If $i_0=i_k$ then $W=i_0\dots i_k$ is called
 {\em closed}.\\
A closed walk with no proper closed subwalk is called a {\em cycle}.\\
If $\digr=\digr(A)$ and $W=i_0\dots i_k$ then the {\em weight} of $W$
is defined as $p(W):=a_{i_0i_1}\otimes\ldots\otimes a_{i_{k-1}i_k}$.
\end{definition}

\begin{definition}[Walk sets]
\label{def:walksets}
Let $\walks$ be a set of walks on a weighted  digraph $\digr$, and 
let $\subcrit$ be a subdigraph of $\digr$. Denote
$p(\walks):=\max\{p(W)\colon W\in\walks\}$
(the maximal weight of all walks in $\walks$).

The following sets of walks will be particularly useful.
\begin{enumerate}
\item $\walkslen{i}{j}{t}:$ the set of walks from $i$ to $j$ that have length $t$.
\item $\walkslen{i}{j}{t,\gamma}:$ the set of walks from 
$i$ to $j$ that have length $t$ modulo $\gamma$.
\item $\walkslennode{i}{j}{t,\gamma}{\subcrit}:$ 
the set of walks of 
$\walkslen{i}{j}{t,\gamma}$
that go through a node in $\subcrit$.
\end{enumerate} 

\end{definition}

Observe that a sequence of nodes is not necessarily a walk. Also observe that an easy way to change 
a walk into another walk is to remove closed subwalks from a given walk,
or to replace a (possibly empty) subwalk by another walk with the same start and same end. This will be the main tool of this article.

The following optimal walk interpretation of matrix powers is well-known:
\begin{equation}\label{e:At}
(A^t)_{i,j}=p(\walkslen{i}{j}{t}).
\end{equation}
See, for example, \cite[Example 1.2.3]{But}. 

We now give some definitions related to connectivity in digraphs.

\begin{definition}[Connectivity]
 A digraph $\digr=(N,E)$ is called {\em strongly connected} if for each $i,j\in N$ there is a walk from
$i$ to $j$.\\
A digraph $\digr'=(N',E')$ is called a {\em sub(di)graph} of $\digr$ if $N'\subseteq N$ and 
\mbox{$E'\subseteq E\cap (N'\times N').$}\\ 
{\em Maximal strongly connected component} of $\digr$, further abbreviated to {\em s.c.c.}, is a maximal strongly 
connected subgraph of $\digr$.\\ 
A digraph is called {\em completely reducible} if 
for any pair of s.c.c. of $\digr$ there is no walk connecting a node of one s.c.c. to a node of another 
s.c.c.
\end{definition}

\begin{definition}[Maximal Girth]
\label{def:girth}
For a strongly connected digraph $\digr$, the {\em girth} of $\digr$ is defined as 
the minimal length of a cycle in $\digr$.\\ 
For a completely reducible digraph $\digr$, the {\em maximal girth} of $\digr$, denoted by $\g(\digr)$,
is defined as the maximal girth of the s.c.c.'s of $\digr$.
\end{definition}
Although we use the same notation~$\g(\digr)$, note that this is not what is usually called the girth of a reducible graph,
namely the least common multiple of the girths of its s.c.c.'s, a quantity not used in this paper.

\begin{definition}[Cyclicity]
\label{def:cyclicity}
For a strongly connected graph $\digr$, the {\em cyclicity} of $\digr$, denoted by $\gamma(\digr)$, is defined as the greatest common divisor
of all cycle lengths of $\digr$.\\
For a completely reducible digraph of $\digr$, the 
{\em cyclicity} of $\digr$ is defined as the least common multiple of the cyclicity of
the strongly connected components of $\digr$.
\end{definition}

In max-plus algebra we deal not only with $\digr(A)$ but also with special subgraphs 
of it such as the critical graph of the following definition.

\begin{definition}[Maximum cycle mean and critical graph]
\label{def:lambdacrit}
The {\em maximum cycle mean} of $A$ is
\begin{equation}
\label{e:mcm}
\lambda(A)=\max_{i_1,\ldots,i_k} (a_{i_1i_2}\otimes\ldots\otimes 
a_{i_{k-1}i_k}\otimes a_{i_ki_1})^{\otimes 1/k}.
\end{equation}
The {\em critical graph} of $A$, denoted by $\crit(A)$, 
is a subdigraph of $\digr(A)$ consisting of all nodes and arcs of the
cycles $i_1\dots i_ki_1$ that attain the maximum in~\eqref{e:mcm}.
Such nodes and arcs are also called {\em critical}.
\end{definition}

\begin{definition}[Visualization]
We say that~$A$ is {\em visualized } if $a_{i,j} \leq \lambda(A)$ for
all~$i$ and~$j$ and $a_{i,j}=\lambda(A)$ whenever $(i,j)$ is an arc of $\crit(A)$.
It is {\em strictly visualized } if it is visualized and $a_{i,j}=\lambda(A)$ if and
only if $(i,j)$ is an arc of $\crit(A)$.

A {\em scaling } of~$A$ is 
a matrix of the form $B = D^{-}AD$ where~$D$ is a
diagonal matrix with finite diagonal entries.
A {\em visualization } of~$A$ is a scaling that is visualized.
Likewise, a {\em strict visualization } of~$A$ is a scaling that is strictly
visualized.
\end{definition}

\begin{theorem}[\cite{SS-91}]
Every~$A$ with $\lambda(A)\neq\0$ has a strict visualization.
\end{theorem}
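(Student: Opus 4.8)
The plan is to reduce to a normalized matrix and then build the scaling vector in two stages, the second of which carries the real content. First I would note that a diagonal scaling $D^{-1}AD$ leaves every cycle weight unchanged (the factors $d_i^{-1}$ and $d_j$ telescope around a cycle), hence preserves both $\lambda(A)$ and $\crit(A)$, while multiplying the whole matrix by a scalar shifts $\lambda$ by that scalar and leaves $\crit(A)$ intact. Replacing $A$ by $\lambda(A)^{-1}\otimes A$, I may therefore assume $\lambda(A)=\1=0$, and it suffices to find a \emph{finite} vector $d$ with $Ad\le d$ such that, writing $b_{ij}=-d_i+a_{ij}+d_j$, one has $b_{ij}=0$ exactly on the arcs of $\crit(A)$; undoing the normalization then gives the statement for general $\lambda(A)\neq\0$.

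Next, observe that \emph{any} finite $d$ with $Ad\le d$ already yields a visualization: one has $b_{ij}\le 0$ for all $i,j$, and around any critical cycle the $b$-weights are $\le 0$ yet must sum to the scaling-invariant cycle weight $0$, forcing $b_{ij}=0$ on every critical arc. So the first stage is to produce such a finite subinvariant $d$. Since all cycle weights are $\le 0$, the Kleene star $A^*=\bigoplus_{k\ge 0}A^k$ has entries in $\Rmax$ (the maximal weight of a walk between two fixed nodes is attained on a simple path), and I would set $d_i=\max_j (A^*)_{ij}$, the maximal weight of a walk starting at $i$, which is finite and satisfies $d_i\ge 0$. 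Using $AA^*\le A^*$ entrywise gives $Ad\le d$. Taking the maximum over $j$ (rather than a single column of $A^*$) is what makes $d$ finite even when $A$ is reducible.

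The crux is upgrading this visualization $B=(b_{ij})$ to a \emph{strict} one, i.e.\ pushing every saturated non-critical arc strictly below $0$. Here I would study the saturation digraph $\mathrm{Sat}$ consisting of all arcs with $b_{ij}=0$. Any cycle of $\mathrm{Sat}$ has total weight $0$, hence is a critical cycle, so every arc lying on a cycle of $\mathrm{Sat}$ is critical; equivalently, each non-critical saturated arc joins two \emph{distinct} strongly connected components of $\mathrm{Sat}$, and since the condensation is a DAG these arcs can be oriented consistently downward. This cycle-weight argument identifying $\mathrm{Sat}$-cycles with critical cycles is the main obstacle, as it is exactly what licenses the choice of potential. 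I would then pick integers $r_i$ that are constant on each s.c.c.\ of $\mathrm{Sat}$ and strictly decrease along every non-critical saturated arc (a topological ranking of the condensation), and rescale once more by $E=\diag(\delta r_1,\dots,\delta r_n)$ with $\delta>0$ small; the new entries are $c_{ij}=b_{ij}+\delta(r_j-r_i)$.

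Finally I would check the three cases. Critical arcs lie inside a single s.c.c.\ of $\mathrm{Sat}$, so there $r_i=r_j$ and $c_{ij}=0$; on non-critical saturated arcs $r_j<r_i$ gives $c_{ij}<0$; and on the finitely many unsaturated arcs, where $b_{ij}<0$, any sufficiently small $\delta>0$ keeps $c_{ij}<0$. Hence $c_{ij}\le 0$ everywhere and $c_{ij}=0$ if and only if $(i,j)\in\crit(A)$, so the composed scaling by the diagonal matrix $DE$ is the desired strict visualization. The only delicate points are the finiteness of $d$ for reducible $A$ and the identification of $\mathrm{Sat}$-cycles with critical cycles; the rest is bookkeeping.
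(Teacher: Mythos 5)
Your proof is correct, but note that the paper itself offers no proof of this statement: it is imported wholesale from Schneider and Schneider~\cite{SS-91}, where it follows from a considerably stronger result, the existence of a \emph{max-balanced} scaling (a canonical form in which, across every cut, the maximal arc weights in the two directions agree), obtained there by an iterative flow-type construction. Your route is genuinely different and more elementary. The standard direct argument in the max-plus literature (Sergeev, Schneider and Butkovi\v{c}, \emph{On visualization scaling, subeigenvectors and Kleene stars in max algebra}) picks a single subeigenvector in the relative interior of the cone $\{x : Ax\le x\}$ --- for instance the conventional arithmetic mean of the columns of $A^*$ --- and shows in one stroke that its saturation digraph is exactly $\crit(A)$. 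You instead start from an \emph{arbitrary} finite subeigenvector (your $d_i=\max_j (A^*)_{ij}$ is finite since $(A^*)_{ii}=\1$, and $Ad\le d$ follows from $AA^*\le A^*$, which is valid because $\lambda(A)\le\1$ permits removal of nonpositive cycles) and repair it a posteriori: the key lemma that every cycle of the saturation digraph has weight $\1$, hence is critical, is exactly right, since cycle weights are scaling-invariant, and it is what makes the condensation a DAG on which your potential $r$ can strictly decrease along non-critical saturated arcs; a sufficiently small $\delta$ then protects the finitely many unsaturated arcs. All three case checks go through, and composing the two diagonal scalings and undoing the normalization $A\mapsto\lambda(A)^{-}\otimes A$ gives the theorem. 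What your two-stage argument buys is self-containedness and transparency (no max-balancing machinery, no need to identify the relative interior of the subeigenvector cone); what the interior-point argument buys is brevity, since the visualization and its strictness come from a single choice of $d$ rather than a subsequent perturbation.
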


\subsection{Weak CSR Expansion} 

We now present important definitions, notations and facts related to the main theme of this work.

\begin{definition}[Kleene star]
Let $A\in\Rpnn$ with 
$\lambda(A)\leq \1$. Then
\begin{equation*}
A^*:=I\oplus A\oplus\ldots\oplus A^{n-1}
\end{equation*}
is called the {\em Kleene star} of $A$. Recall that $I$ denotes the max-plus identity matrix
(which has $\bunity$ on the diagonal and $\bzero$ off the diagonal).
\end{definition}

\begin{definition}[CSR]
\label{def:CSR}
Let $A\in\Rpnn$.
If $\lambda(A)\neq\0$, set
$M=((\lambda(A)^-\otimes A)^{\gamma})^*$, where 
$\gamma$ is the 
cyclicity of $\crit(A)$, and define matrices $C,S$ and $R$ by
\begin{equation*}
\begin{split}
c_{i,j}&:=
\begin{cases}
m_{i,j},& \text{for $j\in \crit(A)$},\\
\bzero,& \text{otherwise}, 
\end{cases},\quad
r_{i,j}:=
\begin{cases}
m_{i,j},& \text{for $i\in \crit(A)$},\\
\bzero,& \text{otherwise}, 
\end{cases},\\
s_{i,j}&:=
\begin{cases}
a_{i,j}, &\text{for $(i,j)\in\crit(A)$},\\
\bzero, &\text{otherwise.}
\end{cases}
\end{split}
\end{equation*}
If $\lambda(A)=\0$, let~$CS^tR$ be the matrix in~$\Rpnn$ with only~$\0$ entries for any~$t$.
\end{definition}

This definition is best understood in combination with Proposition~\ref{p:CSRprops} part~\ref{CSR-walks}, which 
gives an optimal walk interpretation of $(CS^tR[A])_{i,j}$: the maximal weight of walks connecting $i$ to 
$j$ that have length modulo $\gamma$. Optimal walk interpretation also gives an idea why we have ``division'' by $\lambda(A)$ in the definition of $M$ and hence $C$ and $R$: the lengths of corresponding optimal walks are not
controlled.  
Informally speaking, CSR is related to turnpike theorems in a discrete deterministic case~\cite{AGW-05} at least in some optimal long walks most of the material should be 
concentrated on the critical graph so that $A^t$ is determined by $S^t$ for large enough $t$. This also 
shows why there is no ``division'' by $\lambda(A)$ in $S$.

Below we also deal with some auxiliary matrices, for which the
CSR terms are (a priori) different from those derived from $A$. Therefore we will write $CS^tR[A]$ for a CSR term 
derived from $A$.

\begin{definition}[$\bnacht$]
\label{def:BN}
The Nachtigall matrix $B = \bnacht$ is defined as the matrix whose entries are
\begin{equation*}
(\bnacht)_{i,j}=
\begin{cases}
\bzero, & \text{if $i$ or $j$ is a critical node}\\
a_{i,j}, & \text{else.}
\end{cases}
\end{equation*}
\end{definition}

This is the most obvious choice of a matrix $B$ that appears in a weak CSR expansion, since if $B=\bnacht$ then
$B^t_{i,j}$ expresses the optimal weight of all walks that connect $i$ to $j$ and do not 
touch any critical node of $A$.

\begin{definition}[$T_1(A)=T_1(A,\bnacht)$]
The {\em weak CSR threshold } $T_1(A,\bnacht)$ is the least $T$, for which
\begin{equation*}
A^t=CS^tR\oplus \bnacht^t,\quad  t\geq T
\end{equation*}
holds.
In the sequel, $T_1(A,\bnacht)$ is abbreviated to $T_1(A)$.
\end{definition}

We will further work with the following two bounds on $T_1(A)$, which originate in the
works on digraph exponents or indices of imprimitivity and (in the case of unweighted digraphs 
and matrix powers over Boolean algebra) are due to Wielandt~\cite{Wie-50} and Dulmage and Mendelsohn~\cite{DM-64}.

\begin{definition}[$\wiel(n)$ and $\DM(g,n)$]
\label{def:WiDM}
For any $n\in\N$ (the set of natural numbers)
and any $1\le g\le n$, we define
\begin{equation}
\begin{split}
\wiel(n)&=
\begin{cases}
0, & \text{if $n=1$},\\
(n-1)^2+1, & \text{otherwise}.
\end{cases}\\
\DM(g,n)&= g(n-2)+n.
\end{split}
\end{equation}
\end{definition}

\begin{theorem}[\cite{MNS} Theorem 4.1]
\label{t:bounds}
For $A\in\Rpnn$ and $g=\g(\crit(A))$, we have:
$T_1(A)\leq\min(\wiel(n),\DM(g,n))$.
\end{theorem}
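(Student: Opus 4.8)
The plan is to prove the two inequalities $T_1(A)\le\wiel(n)$ and $T_1(A)\le\DM(g,n)$ separately, in both cases by verifying the weak CSR identity $A^t=CS^tR\oplus\bnacht^t$ directly through the optimal-walk interpretation $(A^t)_{ij}=p(\walkslen{i}{j}{t})$. First I would reduce to a normalized situation: since the scaling $A\mapsto D^{-1}AD$ and the shift $A\mapsto\lambda(A)^{-}\otimes A$ change neither $T_1(A)$, nor $g$, nor the combinatorics of walks, I may assume by the strict visualization theorem that $\lambda(A)=\0=0$ and that $A$ is strictly visualized. Then every critical arc has weight $0$, every non-critical arc has negative weight, and every cycle has nonpositive weight, with weight $0$ exactly on the critical cycles. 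The degenerate case $\lambda(A)=\0$ is disposed of separately, since there $CS^tR$ is the all-$\0$ matrix.

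The inequality $CS^tR\oplus\bnacht^t\le A^t$ holds for all sufficiently large $t$ and requires no delicate argument. Entries of $\bnacht^t$ are weights of length-$t$ walks avoiding every critical node, while an entry $M_{iu}\otimes(S^t)_{uv}\otimes M_{vj}$ of $CS^tR$ is the weight of a concatenation of a walk $i\to u$ into the critical graph, a critical traversal $u\to v$, and a walk $v\to j$ out of it. The walks realizing the Kleene-star entries $M_{iu}$ and $M_{vj}$ have length a multiple of the cyclicity $\gamma$, and critical arcs have weight $0$, so one may shorten the critical middle by that same multiple of $\gamma$ (possible once $t$ is large) to produce a genuine length-$t$ walk of equal weight. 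Hence both quantities are at most $p(\walkslen{i}{j}{t})=(A^t)_{ij}$.

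The substance is the reverse inequality $(A^t)_{ij}\le(CS^tR\oplus\bnacht^t)_{ij}$ for $t\ge\min(\wiel(n),\DM(g,n))$. Fix an optimal walk $W$ of length $t$ from $i$ to $j$. If $W$ meets no critical node, then $p(W)\le(\bnacht^t)_{ij}$ and we are done, so assume $W$ visits the critical graph and let $\mathcal C$ be a strongly connected component of $\crit(A)$ that it meets. Let $u$ be the first and $v$ the last node of $\mathcal C$ on $W$, splitting $W=W_{\mathrm{pre}}\,W_{\mathrm{mid}}\,W_{\mathrm{post}}$ with $W_{\mathrm{pre}}\colon i\to u$, $W_{\mathrm{mid}}\colon u\to v$, $W_{\mathrm{post}}\colon v\to j$. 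Because all weights are nonpositive and critical arcs have weight $0$, replacing $W_{\mathrm{mid}}$ by any equally long walk lying entirely inside $\mathcal C$ cannot decrease the weight; and inserting or deleting critical cycles in $W_{\mathrm{pre}},W_{\mathrm{post}}$ shifts their lengths into the residue class modulo $\gamma$ for which their weights are bounded by $c_{iu}=M_{iu}$ and $r_{vj}=M_{vj}$. Then $p(W)$ is bounded by a single term of $(CS^tR)_{ij}$, provided an in-component critical walk $u\to v$ of the requisite length genuinely exists.

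Establishing the existence of that in-component walk is the crux, and it is exactly where the two numerical bounds enter. The length left for the middle is $t$ minus the time to reach $\mathcal C$ from $i$ and the time to leave $\mathcal C$ toward $j$, and it must match $u\to v$ in residue modulo the cyclicity $\gamma_{\mathcal C}$ of $\mathcal C$. That such a walk exists once the middle length exceeds the index of convergence of $\mathcal C$ is precisely the classical exponent theory, applied (after factoring out $\gamma_{\mathcal C}$, i.e., passing to cyclic classes) to the critical component: the worst case of reaching times plus the exponent of $\mathcal C$ is controlled by the Wielandt number $\wiel(n)$ in general, and by the Dulmage--Mendelsohn value $\DM(g,n)=g(n-2)+n$ when $g=\g(\crit(A))$ is used as the girth. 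I expect the main obstacle to be exactly this simultaneous bookkeeping of ``reach in,'' ``traverse,'' and ``reach out,'' carried out so that their sum is bounded by $\wiel(n)$ and by $\DM(g,n)$ rather than by a larger quantity, while the residue adjustments are performed without ever losing weight and the small boundary cases (small $\mathcal C$, or $i$ or $j$ already critical) are checked by hand.
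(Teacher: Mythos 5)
Your proposal has the two inequalities the wrong way round, and this misallocates all of the effort. The direction you call the substance, $(A^t)_{ij}\le(CS^tR\oplus\bnacht^t)_{ij}$, holds for \emph{every} $t\ge 1$ by a one-line case split: an optimal walk of length $t$ either avoids $\crit(A)$, whence its weight is at most $(\bnacht^t)_{ij}$, or it passes through a critical node and tautologically has length $\equiv t\pmod\gamma$, whence its weight is at most $p(\walkslennode{i}{j}{t,\gamma}{\crit(A)})=(CS^tR)_{ij}$ by the optimal-walk interpretation (Proposition~\ref{p:CSRprops}, part~1); this is exactly the remark made in the proof of Proposition~\ref{p:twiceopt}. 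Your surgery with the first and last visits $u,v$ to a component $\mathcal{C}$ is therefore unnecessary there, and as written it also has a hole: to compare $p(W_{\mathrm{pre}})$ with $M_{iu}$ for a \emph{fixed} entry node $u$ you must bring $l(W_{\mathrm{pre}})$ to $0$ modulo $\gamma$ by appending closed critical walks at $u$, but those lengths only generate multiples of $\gamma_{\mathcal{C}}$, so the residue of $l(W_{\mathrm{pre}})$ modulo $\gamma$ cannot in general be corrected for fixed $u$; what makes the interpretation true is the maximum over entry/exit nodes sliding along the critical cycles (the cyclic-classes argument behind Proposition~\ref{p:representation}), which your entrywise Kleene-star manipulation does not capture. (Minor: your normalization should read $\lambda(A)=\1=0$, not $\0$.)

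Conversely, the direction you dismiss as requiring ``no delicate argument'' --- $(CS^tR)_{ij}\le(A^t)_{ij}$ for all $t\ge\min(\wiel(n),\DM(g,n))$ --- is the entire theorem: given an optimal walk of length $\equiv t\pmod\gamma$ through a critical node, one must produce a walk of length \emph{exactly} $t$ of no smaller weight, and ``possible once $t$ is large'' is precisely what has to be quantified. Your route via the classical exponent of $\mathcal{C}$ plus reach-in/reach-out bookkeeping does not deliver the stated constants: the weight-optimal connecting walks realizing $M_{iu}$ and $M_{vj}$ can have length up to $(n-1)\gamma$ rather than the graph distance, the residue corrections cost further insertions, and a walk may meet several components of $\crit(A)$ (handled in the paper by Propositions~\ref{p:representationSCC} and~\ref{p:T1ABN}), so the sum of component index and connection lengths overshoots $\DM(g,n)=g(n-2)+n$. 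The proof in~\cite{MNS}, reflected in this paper's toolkit, is instead a \emph{global} cycle-removal argument: treat the whole walk at once, remove a nonempty subset of its cycles whose total length is divisible by $g$ (the pigeonhole Lemma~\ref{l:HA}), and re-insert copies of a girth-$g$ critical cycle to restore the exact length. Quantitatively this is the cycle removal threshold: $T_{cr}^{g}(\subcrit)\le gn+n-n_1-1$ (Proposition~\ref{p:TcRLin}) and $T_1(A)\le T_{cr}^{g}(\subcrit)-g+1$ (Proposition~\ref{p:TcrToT1}) give $T_1(A)\le\DM(g,n)+g-n_1\le\DM(g,n)$ since $n_1\ge g$, and the Wielandt bound is obtained by the same mechanism (compare the walk decomposition in the proof of Proposition~\ref{p:wielwalk}, which the paper explicitly describes as a refinement of the proof of Theorem~\ref{t:bounds} in~\cite{MNS}). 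Without this global mechanism your sketch shows at best that \emph{some} finite threshold exists, not the bounds $\wiel(n)$ and $\DM(g,n)$.
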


\begin{proposition}[\cite{MNS},\cite{SS-11}]
\label{p:CSRprops}
Let $A\in\Rmax^{n\times n}$ have $\lambda(A)=\1$.
\begin{enumerate}[(i)]
\item\label{CSR-walks} CSR terms have the following 
{\bf optimal walk interpretation}:
\begin{equation*}
(CS^tR[A])_{i,j}=p(\walkslennode{i}{j}{t,\gamma}{\crit(A)})\quad\forall i,j\in\{1,\ldots,n\}
\end{equation*}
for $\gamma$ being any multiple of $\gamma(\crit(A))$.
\item\label{CSR-period} $CS^{t+\gamma}R[A]=CS^tR[A]$ for all $t\geq 1$ {\bf (periodicity)}.
\item\label{CSR-group}  
$CS^{t_1}R[A]CS^{t_2}R[A]=CS^{t_1+t_2}R[A]$ for all $t_1,t_2\geq 1$ {\bf (group law)}.
\item\label{CSR-limit}
$\lim_{k\to\infty} A^{t+k\gamma}=CS^tR[A]\quad\forall t>0$. {\bf (limit property)}
\item\label{CSR-pseudoGpe} $A(CS^tR[A])=(CS^tR[A])A=CS^{t+1}R[A]$
\end{enumerate}
Parts~(\ref{CSR-group}) and~(\ref{CSR-pseudoGpe}) also hold with general $\lambda(A)$.
\end{proposition}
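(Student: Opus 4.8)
The plan is to establish the optimal walk interpretation (Part~\ref{CSR-walks}) first and to deduce the other four properties from it. Since $\lambda(A)=\1$, I would begin by reducing to the case where $A$ is \emph{visualized}: by the visualization theorem~\cite{SS-91}, $A$ has a visualization $D^{-1}AD$, and a diagonal scaling sends $A^t\mapsto D^{-1}A^tD$, $C\mapsto D^{-1}CD$, $S\mapsto D^{-1}SD$, $R\mapsto D^{-1}RD$, so that $CS^tR\mapsto D^{-1}(CS^tR)D$ and $\bnacht\mapsto D^{-1}\bnacht D$; hence all five identities are invariant under scaling and it suffices to prove them for visualized $A$. For such $A$ every arc has weight $\le\1$, so every walk has nonpositive weight, every critical walk has weight exactly $\1$, the entries of $S$ and of each $S^t$ lie in $\{\1,\0\}$, and $M=(A^\gamma)^*$ (well defined since $\lambda(A^\gamma)=\gamma\lambda(A)=\1$) has nonpositive entries and is idempotent, $M^2=M$. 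I would first treat $\gamma=\gamma(\crit(A))$ and pass to a general multiple at the end, where it only coarsens the length-residue condition.

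For Part~\ref{CSR-walks}, one inequality is routine: each summand $c_{ik}\otimes(S^t)_{kl}\otimes r_{lj}$ with $k,l$ critical is the weight of a concrete walk $i\to k\to l\to j$ of length $\equiv t\pmod\gamma$ meeting the critical node $k$, because $c_{ik}=m_{ik}$ and $r_{lj}=m_{lj}$ are attained by walks of length a multiple of $\gamma$ and $(S^t)_{kl}$ by a critical walk of length exactly $t$; taking the maximum gives $(CS^tR)_{ij}\le p(\walkslennode{i}{j}{t,\gamma}{\crit(A)})$. The reverse inequality is the crux. Given an optimal walk $W$ from $i$ to $j$ of length $\equiv t\pmod\gamma$ through a critical node, I would split it at its first critical node $k$ and its last critical node $l$ as $W=W_1W_2W_3$, and replace $W$ by a walk of the canonical form ``(entry walk $i\to k$ of length a multiple of $\gamma$)(critical walk $k\to l$)(exit walk $l\to j$ of length a multiple of $\gamma$)'' of the same total length modulo $\gamma$ and of weight $\ge p(W)$; its weight is then captured by a term $m_{ik}\otimes(S^t)_{kl}\otimes m_{lj}$ in the defining maximum of $(CS^tR)_{ij}$, the critical middle being taken of length $\equiv t\pmod\gamma$.

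This replacement is the main obstacle. Because $\lambda(A)=\1$, every closed subwalk has weight $\le\1$ while every closed critical walk has weight exactly $\1$, and since $\gamma$ is a multiple of $\gamma(\crit(A))$ each critical node lies on weight-$\1$ closed critical walks of every sufficiently large admissible length. The replacement should follow by deleting negative-weight closed subwalks from the excursion $W_2$, projecting the remainder onto $\crit(A)$, and padding with weight-$\1$ critical cycles to repair the length residues. The delicate points are the bookkeeping of these residues against the cyclicities of the individual strongly connected components of $\crit(A)$, and the case where $k$ and $l$ lie in \emph{different} components, so that $W_2$ genuinely crosses between components; it is precisely here that the factor $M=(A^\gamma)^*$, rather than $S$ alone, is needed to carry the weight of the inter-component passage.

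With Part~\ref{CSR-walks} established the remaining properties are short. Periodicity (Part~\ref{CSR-period}) is immediate, since $\walkslennode{i}{j}{t,\gamma}{\crit(A)}$ and $\walkslennode{i}{j}{t+\gamma,\gamma}{\crit(A)}$ are the same set. The group law (Part~\ref{CSR-group}) and pseudo-group property (Part~\ref{CSR-pseudoGpe}) follow by cutting and concatenating walks at a critical node, using the same weight-$\1$-cycle padding to correct length residues, together with $M^2=M$ to reduce the product $RC$ to the critical block of $M$; the extension of Parts~\ref{CSR-group} and~\ref{CSR-pseudoGpe} to general $\lambda(A)$ is obtained by applying the $\lambda=\1$ case to $\lambda(A)^-\otimes A$, which scales both sides of each identity by the same power of $\lambda(A)$. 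Finally, for the limit property (Part~\ref{CSR-limit}) I would invoke the weak CSR expansion $A^L=CS^LR\oplus\bnacht^L$, valid for $L$ large by Theorem~\ref{t:bounds}: taking $L=t+k\gamma$ and using periodicity gives $A^{t+k\gamma}=CS^tR\oplus\bnacht^{t+k\gamma}$, and since $\bnacht$ has no critical node all its cycles have mean below $\lambda(A)=\1$, so $\bnacht^{t+k\gamma}\to\0$ entrywise as $k\to\infty$ and the limit equals $CS^tR$.
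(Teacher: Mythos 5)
Your architecture differs from the paper's in an important way: the paper does not reprove Parts~\ref{CSR-walks}--\ref{CSR-group} at all, but cites them (\cite{SS-11} Theorem 3.3 and Proposition 3.2/Theorem 3.4, \cite{MNS} Theorem 6.1); it proves Part~\ref{CSR-limit} exactly as you do (weak CSR expansion plus $\lambda(\bnacht)<\1$, hence $\bnacht^t\to\0$, plus periodicity), deduces Part~\ref{CSR-pseudoGpe} in one line from Part~\ref{CSR-limit} by $A\,(CS^tR)=A\lim_k A^{t+k\gamma}=\lim_k A^{t+1+k\gamma}=CS^{t+1}R$ (a shortcut you could adopt in place of your walk-surgery derivation of Parts~\ref{CSR-group} and~\ref{CSR-pseudoGpe}), and handles general $\lambda(A)$ by homogeneity, as you do. So the substance of your proposal is the self-contained proof of Part~\ref{CSR-walks} --- and there the crux step is not actually carried out, only flagged as ``delicate.''

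The gap is concrete: your canonical-form replacement fixes $k$ and $l$ as the \emph{first and last} critical nodes of the optimal walk $W$ and seeks entry/exit walks of length $\equiv 0\pmod\gamma$ around a critical middle $k\to l$ of length $\equiv t\pmod\gamma$. This scheme fails even when $\crit(A)$ is a single cycle of length $\gamma$: if $W$ enters and leaves the critical cycle at the same node $c$, with an excursion of length $\not\equiv 0\pmod\gamma$, then $k=l=c$, every critical walk from $c$ to $c$ has length $\equiv 0\pmod\gamma$, and no entry/exit lengths that are multiples of $\gamma$ can restore the residue $t$. Your proposed repair --- padding with weight-$\1$ critical cycles --- cannot fix this: closed critical walks based at a \emph{fixed} node have lengths that are (large) multiples of the cyclicity $\sigma$ of its component, so padding at a fixed node shifts lengths only by multiples of $\sigma$, never by an arbitrary residue mod $\gamma$ (one needs $\sigma\mid l(W_1)$ for your adjustment to exist). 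The proof in \cite{SS-11} instead splits $W$ at a single critical node $c$, inserts there one long closed critical walk of length $\equiv 0\pmod\gamma$, and then \emph{slides the two cut points along the inserted walk}: the pair $(k,l)$ indexing the dominating term $m_{ik}\otimes(S^t)_{kl}\otimes m_{lj}$ is chosen among interior nodes of the inserted critical walk (in suitable cyclic classes), not as the first/last critical nodes of $W$. This mobility of the split points is the missing idea; your remark that $M$ must absorb inter-component passages points in the right direction, but the residue bookkeeping you defer is precisely where the fixed-$(k,l)$ decomposition breaks, and your derivations of Parts~\ref{CSR-group} and~\ref{CSR-pseudoGpe} by cutting at a critical node need the same sliding device to guarantee that both pieces still pass through $\crit(A)$ with the right residues.
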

\begin{proof}
\eqref{CSR-walks}: This property follows from~\cite{SS-11} Theorem 3.3, or
\cite{MNS} Theorem 6.1 where a more general statement is given.\\
\eqref{CSR-period}, \eqref{CSR-group}: These properties are shown in~\cite{SS-11} Proposition 3.2 and
Theorem 3.4.\\
\eqref{CSR-limit}: It is obvious that $\lambda(B)<\bunity$ and therefore $\lim B^t=\bzero$.
The claim then follows from the weak CSR expansion $A^t=CS^tR\oplus B^t$ and the periodicity
of $\{CS^tR\}_{t\geq 1}$ (ii).\\
\eqref{CSR-pseudoGpe} can be deduced from~\eqref{CSR-limit} (as could \eqref{CSR-group}).

Extension to general $\lambda(A)$ follows from the homogeneity of (iii) and (v).
\end{proof}

\section{Theorem Statements and Proof Strategy}
\subsection{Statements}\label{sec:thms}
For any matrix $A \in\Rmax^{n\times n}$ and any $1\leq g\leq n$, we define
the following matrices.

\begin{equation}
\label{e:A1}
(A_1)_{i,j}= 
\begin{cases}
a_{i,j} & \text{if $j=i+1$ and $1\leq i\leq n-1$ }\\
&\text{or $(i,j)\in\left\{(n,1),(g,1)\right\}$},\\
\0 & \text{otherwise}.
\end{cases}
\end{equation}

\begin{equation}
\label{e:B1}
(B_1)_{i,j}=
\begin{cases}
a_{i,j}, &\text{if $i>g,j>g$ and $j\equiv_g i+1$ }\\
\0, &\text{otherwise}
\end{cases}
\end{equation}

\begin{equation}
\label{e:A2}
(A_2)_{i,j}=
\begin{cases}
\0, &\text{if $(A_1\oplus B_1)_{i,j}>\0$},\\
a_{i,j}, &\text{otherwise}.
\end{cases}
\end{equation}

\begin{figure}
\centering
\begin{tikzpicture}[>=latex']
\colorlet{darkgreen}{black!50!green}
\node[circle,very thick,draw,minimum size=9mm] (n1) at (0,0) {$1$};
\node[circle,very thick,draw,minimum size=9mm] (n2) at (-1.5,1.7) {$2$};
\node[circle,very thick,draw,minimum size=9mm] (n3) at (0,3.4) {$\scriptstyle g=3$};
\node[circle,very thick,draw,minimum size=9mm] (n4) at (2,3.4) {$4$};
\node[circle,very thick,draw,minimum size=9mm] (n5) at (4,3.4) {$5$};
\node[circle,very thick,draw,minimum size=9mm] (n6) at (6,3.4) {$6$};
\node[circle,very thick,draw,minimum size=9mm] (n7) at (8,3.4) {$7$};
\node[circle,very thick,draw,minimum size=9mm] (n8) at (9.5,1.7) {$8$};
\node[circle,very thick,draw,minimum size=9mm] (n9) at (8,0) {$9$};
\node[circle,very thick,draw,minimum size=9mm] (n10) at (6,0) {$10$};
\node[circle,very thick,draw,minimum size=9mm] (n11) at (4,0) {$11$};
\node[circle,very thick,draw,minimum size=9mm] (n12) at (2,0) {$12$};

\draw[-,red] (n4) -- (n5);
\draw[-,red] (n5) -- (n6);
\draw[-,red] (n6) -- (n7);
\draw[-,red] (n7) -- (n8);
\draw[-,red] (n8) -- (n9);
\draw[-,red] (n9) -- (n10);
\draw[-,red] (n10) -- (n11);
\draw[-,red] (n11) -- (n12);
\draw[->,red] (n5) -- (n9);
\draw[->,red] (n5) -- (n12);
\draw[->,red] (n10) -- (n8);
\draw[->,red] (n10) -- (n5);

\draw[->,very thick,,loosely dotted,darkgreen] (n1) -- (n2);
\draw[->,very thick,,loosely dotted,darkgreen] (n2) -- (n3);
\draw[->,very thick,,loosely dotted,darkgreen] (n3) -- (n4);

\draw[->,very thick,,loosely dotted,darkgreen] (n4) -- (n5);
\draw[->,very thick,,loosely dotted,darkgreen] (n5) -- (n6);
\draw[->,very thick,,loosely dotted,darkgreen] (n6) -- (n7);
\draw[->,very thick,,loosely dotted,darkgreen] (n7) -- (n8);
\draw[->,very thick,,loosely dotted,darkgreen] (n8) -- (n9);
\draw[->,very thick,,loosely dotted,darkgreen] (n9) -- (n10);
\draw[->,very thick,,loosely dotted,darkgreen] (n10) -- (n11);
\draw[->,very thick,,loosely dotted,darkgreen] (n11) -- (n12);
\draw[->,very thick,,loosely dotted,darkgreen] (n12) -- (n1);
\draw[->,very thick,,loosely dotted,darkgreen] (n3) -- (n1);

\end{tikzpicture}
\caption{Example of the digraph of~$A_1$ (dotted arcs) and $B_1$ (solid arcs)
For $B_1$, only some of the arcs are shown.
}
\label{fig:graph:decomp}
\end{figure}

Figure~\ref{fig:graph:decomp} shows an example of~$A_1$ and~$B_1$.
The definitions of $A_1$, $B_1$ and $A_2$ imply that
\begin{equation}
\label{e:repr}
A=A_1\oplus B_1\oplus A_2.
\end{equation}

\begin{theorem}\label{t:mainres}
Let $A\in\Rpnn$ with $g= \g(\crit(A))\geq 2$. 
Then $A$ satisfies $T_1(A)=\DM(g,n)$ if and only if there exists a renumbering of nodes such that
the following conditions hold.

\begin{enumerate}[1.]
\item\label{Cgnprime} $g$ and $n$ are coprime;

\item\label{CGc} $\crit(A)$ is strongly connected with a unique  critical cycle of length~$g$ up to choice of its first node;

\item\label{CZO} $1\cdots g1$ is critical

\item\label{CA2} $A_2<CSR[A_1]$;

\item\label{CB1} $\lambda(A)^{\otimes(j-i-1)}\otimes (B_1)_{i,j}<(A_1)_{i,j}^{j-i} \text{ when } j>i+1\;\text{, } i>g \text{ and } j\equiv_g (i+1)$;

\item\label{CB2} $(B_1)_{g+1,n}^{\DM(g,n)-1}<(CS^{\DM(g,n)-1}R)_{g+1,n}[A_1]$,
\end{enumerate}
where $A_1$, $B_1$ and $A_2$ are defined as in~\eqref{e:A1},~\eqref{e:B1} and~\eqref{e:A2}.

The renumbering satisfying Conditions 1--6 is necessarily unique.
More precisely, it is the only one that ensures that
\begin{itemize}
 \item $1\cdots n1$ is an Hamiltonian cycle of $\digr(A)$ with the largest weight, which is unique up to choice of its first node.
\item $1\cdots g1$ is critical.
\end{itemize}
\end{theorem}

\begin{remark}
\label{r:irred}
Note that in the above theorem we do not assume that $A$ is irreducible.
The same is true about all the statements in this section.
However, it follows from Condition~\ref{CA2} that $A_1$ is irreducible (and aperiodic) and thus, so is~$A$.
\end{remark}

\begin{remark}
\label{r:n<2g}
The case $g=1$ turns out to be much more complicated. Although some results do apply (e.g. Proposition~\ref{p:critgraph})
we were not able to characterize the matrices reaching the bound. Notice that already in the Boolean case the situation is more complicated
and not completely understood (see~\cite{Shao}).

On the other hand, if $n<2g$ the situation is simpler : $j\equiv_g i+1$ with $i,j>g$ holds if and only if $j=i+1$ and $i,j>g$. In 
this case Conditions~\ref{CB1} and~\ref{CB2} above hold automatically. 
For Condition~\ref{CB2}, note that $\digr(B_1)$ is acyclic hence $B_1^{n-g}=\0$.   
\end{remark}

\begin{remark}
The index of $\digr(A_1)$ reaches the bound~$\DM(g,n)$.
It is easy to recover the characterization of such graphs obtained in~\cite{Shao} from the theorem.
\end{remark}

Let us see what Theorem~\ref{t:mainres} means on an example.
\begin{example}
We fix~$g=3$ and~$n=8$.
The theorem says that any matrix of this type that reaches the bound can be decomposed
as in Equations~\eqref{e:A1}, \eqref{e:B1},\eqref{e:A2} with $1231$ as critical cycle.
Let us assume that $(A_1)_{1,2}=(A_1)_{2,3}=(A_1)_{3,1}=0$ and all other finite entries of $A_1$ equal~$-1$.
(See. Figure~\ref{fig:graph38}.)

\begin{figure}
\centering
\begin{tikzpicture}[>=latex']
\node[circle,very thick,draw,minimum size=9mm] (n1) at (0,0) {$1$};
\node[circle,very thick,draw,minimum size=9mm] (n2) at (-2,2) {$2$};
\node[circle,very thick,draw,minimum size=9mm] (n3) at (0,4) {$\scriptstyle g=3$};
\node[circle,very thick,draw,minimum size=9mm] (n4) at (3,4) {$4$};
\node[circle,very thick,draw,minimum size=9mm] (n5) at (6,4) {$5$};
\node[circle,very thick,draw,minimum size=9mm] (n6) at (8,2) {$6$};
\node[circle,very thick,draw,minimum size=9mm] (n7) at (6,0) {$7$};
\node[circle,very thick,draw,minimum size=9mm] (n8) at (3,0) {$8$};

\draw[-,red] (n4) -- (n5);
\draw[-,red] (n5) -- (n6);
\draw[-,red] (n6) -- (n7);
\draw[-,red] (n7) -- (n8);
\draw[->,red] (n4) to node [left] {$\scriptstyle <-4$} (n8);
\draw[->,red] (n6) to node [below left] {$\scriptstyle <2$} (n4);%
\draw[->,red] (n7) to node [right] {$\scriptstyle <2$} (n5);%
\draw[->,red] (n8) to node [above left] {$\scriptstyle <2$} (n6);

\draw[->,very thick,,loosely dotted,darkgreen] (n1) to node [below left] {$0$} (n2);%
\draw[->,very thick,,loosely dotted,darkgreen] (n2) to node [above left] {$0$} (n3);%
\draw[->,very thick,,loosely dotted,darkgreen] (n3) to node [left] {$0$} (n1);%

\draw[->,very thick,,loosely dotted,darkgreen] (n3) to node [above ] {$-1$} (n4);
\draw[->,very thick,,loosely dotted,darkgreen] (n4) to node [above ] {$-1$} (n5);
\draw[->,very thick,,loosely dotted,darkgreen] (n5) to node [above right] {$-1$} (n6);
\draw[->,very thick,,loosely dotted,darkgreen] (n6) to node [below right] {$-1$} (n7);
\draw[->,very thick,,loosely dotted,darkgreen] (n7) to node [below] {$-1$} (n8);
\draw[->,very thick,,loosely dotted,darkgreen] (n8) to node [below] {$-1$} (n1);

\end{tikzpicture}
\caption{Digraph of the example with $g=3$ and~$n=8$}
\label{fig:graph38}
\end{figure}

Let us compute $CSR[A_1]$ with Scicoslab (a fork from Scilab which incorporates the MaxPlus Toolbox).
We get
$$
CSR[A_1]=
\begin{pmatrix}
 - 12 &    \color{darkgreen}{ 0} &   - 6 &   - 13 &  - 2 &   - 9 &   - 16 &  - 5 \\ 
  - 6 &   - 12 &    \color{darkgreen}{0} &   - 7 &   - 14 &  - 3 &   - 10 &  - 17\\  
   \color{darkgreen}{ 0} &   - 6 &   - 12 &  \color{darkgreen}{- 1} &   - 8 &   - 15 &  - 4 &   - 11\\  
  - 11 &  - 17 &  - 5 &   - 12 &  \color{darkgreen}{- 19} &  - 8 &   - 15 &  \color{red}{- 22}\\  
  - 4 &   - 10 &  - 16 &  - 5 &   - 12 &  \color{darkgreen}{- 19} &  - 8 &   - 15\\  
  - 15 &  - 3 &   - 9 &   \color{red}{- 16 }&  - 5 &   - 12 &  \color{darkgreen}{- 19} &  - 8\\   
  - 8 &   - 14 &  - 2 &   - 9 &   \color{red}{- 16} &  - 5 &   - 12 & \color{darkgreen}{ - 19}\\  
  \color{darkgreen}{- 1} &   - 7 &   - 13 &  - 2 &   - 9 &   \color{red}{- 16} &  - 5 &   - 12\\ 
\end{pmatrix}
$$
The colored entries are those for which $(A_2)_{i,j}=\0$ and thus Condition~\ref{CA2} is trivial.

For those entries, Conditions~\ref{CGc} and~\ref{CZO} mean that $(B_1)_{6,4}, (B_1)_{7,5}, (B_1)_{8,6}<2$,
while Condition~\ref{CB1} means that $(B_1)_{4,8}<-4$.

Finally, get

$$T_1(A)=\DM(g,n) \iff 
\left\{\begin{array}{c}
        (B_1)_{4,8}^{25}<-22 \textnormal{ (Condition~\ref{CB2}) }\\ 
 \\
 A<\begin{pmatrix}
 - 12 &    \color{darkgreen}{ 0^=} &   - 6 &   - 13 &  - 2 &   - 9 &   - 16 &  - 5 \\ 
  - 6 &   - 12 &    \color{darkgreen}{0^=} &   - 7 &   - 14 &  - 3 &   - 10 &  - 17\\  
   \color{darkgreen}{ 0^=} &   - 6 &   - 12 &  \color{darkgreen}{- 1^=} &   - 8 &   - 15 &  - 4 &   - 11\\  
  - 11 &  - 17 &  - 5 &   - 12 &  \color{darkgreen}{- 1^=} &  - 8 &   - 15 &  \color{red}{- 4}\\  
  - 4 &   - 10 &  - 16 &  - 5 &   - 12 &  \color{darkgreen}{- 1^=} &  - 8 &   - 15\\  
  - 15 &  - 3 &   - 9 &   \color{red}{2}&  - 5 &   - 12 &  \color{darkgreen}{- 1^=} &  - 8\\   
  - 8 &   - 14 &  - 2 &   - 9 &   \color{red}{2} &  - 5 &   - 12 & \color{darkgreen}{ - 1^=}\\  
  \color{darkgreen}{- 1^=} &   - 7 &   - 13 &  - 2 &   - 9 &   \color{red}{2} &  - 5 &   - 12\\ 
\end{pmatrix}

      \end{array}\right. ,$$
      
where the matrix inequality means that each entry of~$A$ should be equal to the corresponding entry if it is denoted by $^=$ and strictly less than it otherwise.

A computation of~$B_1^{25}$ shows that Condition~\ref{CB2} can not be removed.
For instance, it will be satisfied for~$(B_1)_{6,4}=(B_1)_{7,5}=(B_1)_{8,6}=-1$ not for~$(B_1)_{6,4}=(B_1)_{7,5}=(B_1)_{8,6}=0$.

\end{example}

In~\cite{MNSS}[Lemma~8.2], we have noticed that for any matrix $A$ with $\lambda(A)\neq\0$, the maximal transient of its critical rows or columns is at least the index of its critical graph,
so that if the index of $\crit(A)$ reaches the bound, so does the transient of one row and one column.
The following shows that the converse is also true.

\begin{theorem}\label{t:CritColDM}
Let $A\in\Rpnn$ and $g= \g(\crit(A))$.  
Then the transient of the critical rows and columns of~$A$ is equal
to~$\DM(g,n)$
if and only if
its critical graph has index~$\DM(g,n)$.
\end{theorem}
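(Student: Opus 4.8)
Write $T_{cr}(A)$ for the maximum of the transients of the critical rows and columns of~$A$, let $n_c$ be the number of critical nodes, and let $\ind(\crit(A))$ denote the index of the critical graph. The plan is to sandwich $T_{cr}(A)$ between $\ind(\crit(A))$ and $\ind(\crit(A))+(n-n_c)$ and to feed this into the Dulmage--Mendelsohn bound applied to $\crit(A)$ \emph{itself}; the two bounds on $T_{cr}(A)$ then collapse to equalities exactly when $n_c=n$ and the critical graph is extremal, which is what drives both implications.

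First I would collect the two easy estimates. The lower bound $T_{cr}(A)\ge \ind(\crit(A))$ is \cite{MNSS}[Lemma~8.2]. For the upper estimate on the index of the critical graph I would apply Theorem~\ref{t:bounds} to the $n_c\times n_c$ submatrix~$S'$ supported on the critical nodes: every arc of~$S'$ is critical, so $\crit(S')=\crit(A)$ and $\g(\crit(S'))=g$, and the Nachtigall matrix of~$S'$ is~$\0$; hence $S'$ reaches its periodic regime at $T_1(S')$, and as all weights are equal this transient is the combinatorial index, giving $\ind(\crit(A))=T_1(S')\le \DM(g,n_c)$. Since $\DM(g,\cdot)$ is strictly increasing (recall $g\ge 1$ always), we get $\DM(g,n_c)\le\DM(g,n)$ with equality if and only if $n_c=n$.

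The heart of the argument is the complementary bound $T_{cr}(A)\le \ind(\crit(A))+(n-n_c)$. To prove it I would pass to a visualization, so that critical arcs carry weight $\lambda(A)$ and all other arcs strictly less. Fixing a critical row~$i$ and a target~$j$ (columns being symmetric), by the optimal walk interpretation of Proposition~\ref{p:CSRprops} I must show that for $t\ge \ind(\crit(A))+(n-n_c)$ every heaviest walk of length exactly~$t$ from~$i$ to~$j$ is matched in weight by a walk of length congruent to~$t$ modulo $\gamma(\crit(A))$ that passes through the critical graph. The idea is to split a heaviest walk into a portion inside $\crit(A)$ followed by a terminal excursion through the non-critical nodes to~$j$: after visualization each non-critical step is strictly cheaper than a critical one, so this excursion can be taken to be a simple path of length at most $n-n_c$, while the remaining length, being at least $\ind(\crit(A))$, lets the critical portion realize the periodic (CSR) weight at the node where the excursion begins. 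The main obstacle is precisely this step, namely making the domination of non-critical excursions rigorous in the max-plus sense (weights, not mere reachability) while simultaneously steering the length of the critical portion into the correct residue class modulo $\gamma(\crit(A))$.

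Finally I would assemble the statement by chaining the three estimates: $T_{cr}(A)\le \ind(\crit(A))+(n-n_c)\le \DM(g,n_c)+(n-n_c)=g(n_c-2)+n\le \DM(g,n)$, where the last inequality is an equality precisely when $n_c=n$. For the ``if'' direction, $\ind(\crit(A))=\DM(g,n)$ forces $\DM(g,n_c)=\DM(g,n)$, hence $n_c=n$; then the heart bound reads $T_{cr}(A)\le \ind(\crit(A))$, and with \cite{MNSS}[Lemma~8.2] this yields $T_{cr}(A)=\ind(\crit(A))=\DM(g,n)$. For the ``only if'' direction, $T_{cr}(A)=\DM(g,n)$ forces every inequality in the chain to be tight; in particular $n_c=n$ and $T_{cr}(A)=\ind(\crit(A))$, so $\ind(\crit(A))=\DM(g,n)$, as required. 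Note that this route treats $g=1$ and $g\ge 2$ uniformly and does not need the detailed classification of Theorem~\ref{t:mainres}.
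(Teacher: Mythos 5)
There is a genuine gap: your central lemma $T_{cr}(A)\le \ind(\crit(A))+(n-n_c)$ is false, and both directions of your argument lean on it. A counterexample is the weighted Wielandt matrix: take $A=A_1$ with the Hamiltonian cycle $1\cdots n1$ at weight $\1$ and the single chord $(n-1,1)$ at a strictly negative weight, so that the cycle $1\cdots(n-1)1$ is non-critical. Then $\crit(A)$ is exactly the Hamiltonian cycle, so $n_c=n$ and the index of $\crit(A)$ is at most $1$ (powers of the adjacency matrix of an $n$-cycle are periodic essentially from the start); your bound would give $T_{cr}(A)\le 1$. But by Theorem~\ref{t:wiel} (case $g=n$) together with the sufficiency part of Theorem~\ref{t:CritColWiel}, and since every node is critical, the transient of the critical rows and columns of this matrix is $\wiel(n)=(n-1)^2+1$. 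Indeed, the remark following Theorem~\ref{t:CritColWiel} records precisely this phenomenon: the critical graph can be a Hamiltonian cycle of negligible index while the critical rows and columns have transient $\wiel(n)$. The reason your excursion argument breaks is that in max-plus the obstruction is a weighted one, not a reachability one: to realize $(CS^tR)_{ij}$ at \emph{exact} length $t$, an optimal walk may be forced to traverse non-critical arcs (here the chord) several times in order to land in the right residue class, and the threshold beyond which length exactly~$t$ is attainable at optimal weight is a Frobenius-type quantity (which lengths are of the form $an+b(n-1)$) that neither $\ind(\crit(A))$ nor $n-n_c$ detects. For the same reason your claimed decomposition of a heaviest walk into a portion inside $\crit(A)$ followed by a terminal simple excursion of length at most $n-n_c$ is unavailable: weight-optimal walks in a residue class may interleave critical and non-critical arcs repeatedly.

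The ``if'' direction can be repaired without your lemma, and this is exactly what the paper does: $T_{cr}(A)\le T_1(A)\le \DM(g,n)$ by Theorem~\ref{t:bounds} (since for critical $i$ or $j$ the $\bnacht$ term is absent), while $T_{cr}(A)$ is at least the index of $\crit(A)$ by \cite{MNSS}[Lemma~8.2]; if that index equals $\DM(g,n)$ the two bounds pinch. But the ``only if'' direction is the real content and cannot be obtained by your counting chain: the conclusions you extract from tightness, namely $n_c=n$ and $T_{cr}(A)=\ind(\crit(A))$, are precisely what requires the hard structural analysis, and the Wielandt example above shows they are not forced by dimension counting ($n_c=n$ there, yet $T_{cr}(A)$ vastly exceeds the index of the critical graph). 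The paper instead uses $T_1(A)=\DM(g,n)$ to trigger the structure theory: Proposition~\ref{p:critgraph} (strong connectivity and uniqueness of the $g$-cycle), Propositions~\ref{p:twiceopt} and~\ref{p:W0} (the interesting walk, which for $g\ge 2$ pins down $(i_0,j_0)=(g+1,n)$ and makes $g+1$ critical, whence the Hamiltonian cycle is critical and $\gamma(\crit(A))=1$ by coprimality), the interlacing Lemma~\ref{l:interlace} for the case $g=1$, and finally strict visualization plus the perturbation Lemma~\ref{l:perturbation} to identify the index of $\crit(A)$ with $T_1(A)=\DM(g,n)$. Your hope of treating $g=1$ and $g\ge 2$ uniformly and bypassing Theorem~\ref{t:mainres} founders on the false lemma.
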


\begin{theorem}
\label{t:wiel}
Let $A\in\Rpnn$.
Then, $T_1(A)=\wiel(n)$ if  and only if
there exists a renumbering of nodes such that
\begin{itemize}
\item[1.] $\g(\crit(A))=n-1$ and $1\cdots(n-1)1$ is critical, or $\g(\crit(A))=n$
and $1\cdots n1$ is critical,
\item[2.] $A_2<CSR[A_1]$,
\end{itemize}
where $A_1$ and $A_2$ are defined as in~\eqref{e:A1} and~\eqref{e:A2} with
$g=n-1$ in both cases.

The renumbering satisfying these conditions is necessarily unique.
More precisely, it is the only one that ensures that 

\begin{itemize}
 \item $1\cdots n1$ is an Hamiltonian cycle with the largest weight, which is unique up to choice of its first node.
 
 \item $1\cdots(n-1)1$ is a cycle of length~$n-1$ with the largest weight, which is unique up to choice of its first node.
\end{itemize}

\end{theorem}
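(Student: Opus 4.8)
The plan is to reduce Theorem~\ref{t:wiel} to the already-proved Dulmage-Mendelsohn characterization of Theorem~\ref{t:mainres}, exploiting that $\wiel(n)=(n-1)^2+1$ coincides with $\DM(n-1,n)=(n-1)(n-2)+n=(n-1)^2+1$. Thus the Wielandt bound is exactly the DM bound in the special case $g=n-1$, and our task is to identify when the minimum $\min(\wiel(n),\DM(g,n))$ of Theorem~\ref{t:bounds} is both attained and equal to $\wiel(n)$. First I would observe that $\DM(g,n)=g(n-2)+n$ is strictly increasing in $g$, so $\DM(g,n)\le\wiel(n)=\DM(n-1,n)$ for all $g\le n-1$, with equality only at $g=n-1$; and $\DM(n,n)=n(n-2)+n=(n-1)^2-1+1>\wiel(n)$ is possible only when $g=n$ (a Hamiltonian critical cycle), in which case the bound $\min(\wiel(n),\DM(n,n))=\wiel(n)$ is governed by the Wielandt term. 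This dichotomy — $g=n-1$ versus $g=n$ — is precisely the disjunction in Condition~1 of the theorem, and it explains why both cases use $g=n-1$ in the definitions of $A_1,A_2$.

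Next I would treat the two cases separately. For the case $g=n-1$, the statement follows almost immediately from Theorem~\ref{t:mainres}: since $\gcd(n-1,n)=1$ automatically, Condition~1 of Theorem~\ref{t:mainres} holds; and by Remark~\ref{r:n<2g}, when $n<2g$, i.e.\ $n<2(n-1)$ which holds for all $n\ge 3$, Conditions~3 and~4 of Theorem~\ref{t:mainres} are vacuous because $\digr(B_1)$ is acyclic (indeed $B_1$ has support only on indices $>g=n-1$, i.e.\ on the single node $n$, so $B_1=\0$). Hence the only surviving requirements are that $\crit(A)$ be strongly connected with a unique critical cycle of length $n-1$, together with Condition~2, $A_2<CSR[A_1]$, matching exactly Conditions~1 and~2 of Theorem~\ref{t:wiel}. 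The uniqueness-of-renumbering claim transfers verbatim from the corresponding clause of Theorem~\ref{t:mainres}.

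The case $g=n$ requires a genuinely separate argument, since Theorem~\ref{t:mainres} assumes $g<n$ (its Condition~1 demands $\gcd(g,n)=1$, impossible for $g=n>1$). Here the critical cycle is Hamiltonian, so after renumbering $1\cdots n1$ is critical, $A_1$ is exactly the Hamiltonian critical cycle (with the extra chord $(g,1)=(n-1,1)$ from its definition), and I would show directly via the optimal-walk interpretation (Proposition~\ref{p:CSRprops}\eqref{CSR-walks}) and the weak CSR expansion that $T_1(A)=\wiel(n)$ holds iff $A_2<CSR[A_1]$. The strategy mirrors the sufficiency/necessity split used for Theorem~\ref{t:mainres}: for sufficiency one checks that the dominant non-CSR walks persist up to length exactly $\wiel(n)-1$ and then get absorbed, and for necessity one argues that any failure of $A_2<CSR[A_1]$ either lowers the threshold or contradicts the structure. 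I expect the main obstacle to be this $g=n$ case, because it is not subsumed by the DM theorem and one must re-verify the walk-combinatorial estimates that produce the exponent $(n-1)^2$; in particular one must confirm that a Hamiltonian critical cycle together with the length-$(n-1)$ chord yields the same extremal walk of length $(n-1)^2$ that saturates the Wielandt bound, and that uniqueness of the largest Hamiltonian cycle and of the largest $(n-1)$-cycle pins down the renumbering. The remaining work is bookkeeping: reconciling the two renumbering-uniqueness statements into the single clause about the unique largest Hamiltonian cycle and unique largest $(n-1)$-cycle asserted at the end of Theorem~\ref{t:wiel}.
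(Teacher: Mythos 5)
Your reduction for the regime $g=\g(\crit(A))\le n-1$ is sound and coincides with the paper's: since $\wiel(n)=\DM(n-1,n)$ and $\DM(g,n)$ is increasing in~$g$, the Wielandt bound can only be attained when $g\in\{n-1,n\}$, and for $g=n-1$ the statement is exactly the special case of Theorem~\ref{t:mainres} in which Conditions~3 and~4 are vacuous by Remark~\ref{r:n<2g} (indeed $B_1=\0$ for $n\ge 3$), with the uniqueness clause transferring as well. (A small slip: $\DM(n,n)=n(n-2)+n=n(n-1)$, not $(n-1)^2-1+1$; the conclusion $\DM(n,n)>\wiel(n)$ is nevertheless correct for $n>2$, and the case $n=g=2$, where equality holds, is handled separately in the paper by Proposition~\ref{p:n=g=2}.)

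The genuine gap is the case $g=n$, which you correctly identify as not subsumed by Theorem~\ref{t:mainres} but then only promise to handle: ``one checks that the dominant non-CSR walks persist'' and ``one must re-verify the walk-combinatorial estimates'' is a plan, not an argument, and this case is where essentially all the new content of the theorem lies. In the paper, sufficiency for $g=n$ is not proved by tracking dominant walks directly: it follows from Lemma~\ref{l:perturbation} (Condition~2 gives $T_1(A)=T_1(A_1)$) combined with the classical fact that the Boolean pattern of $A_1$ is Wielandt's digraph of index $\wiel(n)$, so that some diagonal entry of $A_1^{\wiel(n)-1}$ is still below its CSR value and hence $T_1(A_1)\ge\wiel(n)$. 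Necessity requires the analogue of Proposition~\ref{p:W0}, namely Proposition~\ref{p:wielwalk}: starting from a twice-optimal walk of length $\wiel(n)+n-1$ (whose existence follows as in Proposition~\ref{p:twiceopt}), a delicate cycle-decomposition analysis --- inserting copies of the unique critical cycle at each noncritical arc, then bounding lengths in the connected and disconnected subcases, including the loop and $l(P)=0$ degenerations --- forces the unique form $W_0=n(1\dots(n-1))^{n-1}1\dots n$. Condition~2 is then extracted from $W_0$ by the position bookkeeping of Lemma~\ref{l:CSRA1} together with the four-case analysis on pairs $(i,j)$, and the uniqueness of the renumbering is inherited from the uniqueness of $W_0$. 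None of these three ingredients --- the sufficiency mechanism via the perturbation lemma and Wielandt's Boolean example, the extremal-walk structure theorem, and the derivation of $A_2<CSR[A_1]$ from it --- appears in your proposal, so as it stands the $g=n$ half of the equivalence is unproven in both directions, and the final uniqueness assertion of the theorem is unsupported in that case as well.
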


\begin{remark}
The digraph $\digr(A_1)$ is exactly the unique (up to renumbering) digraph whose index reaches the bound~$\wiel(n)$.
\end{remark}

Let us see what Theorem~\ref{t:wiel} means on an example.
\begin{example}
We fix~$g=n=8$.
The theorem says that any matrix of this type that reaches the bound can be decomposed
as in Equations~\eqref{e:A1} and ,\eqref{e:A2} with $g=7$.
Let us assume that $(A_1)_{i,i+1}=(A_1)_{8,1}=0$ and $(A_1)_{7,1}=-1$.
(See. Figure~\ref{fig:graphWiel8}.)

\begin{figure}
\centering
\begin{tikzpicture}[>=latex']
\node[circle,very thick,draw,minimum size=9mm] (n1) at (0,0) {$7$};
\node[circle,very thick,draw,minimum size=9mm] (n2) at (-2,1) {$8$};
\node[circle,very thick,draw,minimum size=9mm] (n3) at (0,2) {$1$};
\node[circle,very thick,draw,minimum size=9mm] (n4) at (2,2) {$2$};
\node[circle,very thick,draw,minimum size=9mm] (n5) at (4,2) {$3$};
\node[circle,very thick,draw,minimum size=9mm] (n6) at (6,1) {$4$};
\node[circle,very thick,draw,minimum size=9mm] (n7) at (4,0) {$5$};
\node[circle,very thick,draw,minimum size=9mm] (n8) at (2,0) {$6$};

\draw[->,very thick,,loosely dotted,darkgreen] (n1) to node [below left] {$0$} (n2);%
\draw[->,very thick,,loosely dotted,darkgreen] (n2) to node [above left] {$0$} (n3);%
\draw[->,very thick,,loosely dotted,darkgreen] (n3) to node [above ] {$0$} (n4);
\draw[->,very thick,,loosely dotted,darkgreen] (n4) to node [above ] {$0$} (n5);
\draw[->,very thick,,loosely dotted,darkgreen] (n5) to node [above right] {$0$} (n6);
\draw[->,very thick,,loosely dotted,darkgreen] (n6) to node [below right] {$0$} (n7);
\draw[->,very thick,,loosely dotted,darkgreen] (n7) to node [below] {$0$} (n8);
\draw[->,very thick,,loosely dotted,darkgreen] (n8) to node [below] {$0$} (n1);
\draw[->,very thick,,loosely dotted,darkgreen] (n1) to node [right] {$-1$} (n3);%

\end{tikzpicture}
\caption{Digraph of the example with $g=n=8$}
\label{fig:graphWiel8}
\end{figure}

Let us compute $CSR[A_1]$ with Scicoslab.
We get
$$
CSR[A_1]=
\begin{pmatrix}

 - 7&  \color{darkgreen}{  0}&  - 1&  - 2&  - 3&  - 4&  - 5&  - 6\\  
  - 6&  - 7&    \color{darkgreen}{  0}&  - 1&  - 2&  - 3&  - 4&  - 5\\  
  - 5&  - 6&  - 7&    \color{darkgreen}{  0}&  - 1&  - 2&  - 3&  - 4\\  
  - 4&  - 5&  - 6&  - 7&    \color{darkgreen}{  0}&  - 1&  - 2&  - 3\\  
  - 3&  - 4&  - 5&  - 6&  - 7&    \color{darkgreen}{  0}&  - 1&  - 2\\  
  - 2&  - 3&  - 4&  - 5&  - 6&  - 7&    \color{darkgreen}{  0}&  - 1\\  
  \color{darkgreen}{ - 1}&  - 2&  - 3&  - 4&  - 5&  - 6&  - 7&    \color{darkgreen}{  0}\\  
    \color{darkgreen}{  0}&  - 1&  - 2&  - 3&  - 4&  - 5&  - 6&  - 7
\end{pmatrix}
$$
The colored entries are those for which $A_{i,j}=(A_1)_{i,j}$ and thus there is nothing to check.
$A$ satisfies $T_1(A)=\Wi(8)$ if and only if each other entry of~$A$ is strictly less than the corresponding entry of~$CSR[A_1]$.

\end{example}

For the transient of critical rows or columns, we get:

\begin{theorem}\label{t:CritColWiel}
Let $A\in\Rpnn$.
Then the transient of the critical rows and columns of~$A$ is equal
to~$\wiel(n)$
if and only if
it is of the form
$A=A_1\oplus A_2$ such that the index of $\digr(A_1)$
is~$\wiel(n)$, $A_1$ has a critical Hamiltonian cycle,
and~$A_2<CSR[A_1]$.
\end{theorem}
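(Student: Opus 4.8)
The plan is to reduce the statement to Theorem~\ref{t:wiel} through the elementary relationship between the transient of the critical rows and columns, which I denote $T_c(A)$, and the weak CSR threshold $T_1(A)$. Two facts drive the reduction. First, at a critical index the matrix $\bnacht$ has an identically-$\0$ row and column, so $\bnacht^t$ vanishes on every critical row and column; hence for $t\ge T_1(A)$ the expansion $A^t=CS^tR\oplus\bnacht^t$ already gives $(A^t)_{ab}=(CS^tR)_{ab}$ whenever $a$ or $b$ is critical, and therefore $T_c(A)\le T_1(A)\le\wiel(n)$ by Theorem~\ref{t:bounds}. Second, if every node is critical then $\bnacht=\0$ and the two transients coincide. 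Since a critical Hamiltonian cycle passes through all $n$ nodes and thus makes every node critical, the ``if'' direction is immediate: for $A=A_1\oplus A_2$ of the stated form the canonical renumbering turns $A_1$ into \eqref{e:A1} with $g=n-1$, Conditions~1--2 of Theorem~\ref{t:wiel} hold, so $T_1(A)=\wiel(n)$, and all nodes being critical yields $T_c(A)=T_1(A)=\wiel(n)$.

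For the converse, $T_c(A)=\wiel(n)$ together with $T_c(A)\le T_1(A)\le\wiel(n)$ forces $T_1(A)=\wiel(n)$, so Theorem~\ref{t:wiel} applies: after renumbering $A=A_1\oplus A_2$ with $\digr(A_1)$ the weighted Wielandt digraph (index $\wiel(n)$), $A_2<CSR[A_1]$, and either $1\cdots n1$ critical (so $g=n$) or $1\cdots(n-1)1$ critical (so $g=n-1$). I then only need the Hamiltonian cycle $1\cdots n1$ to be critical, for then all nodes are critical and $A$ already has exactly the claimed form. Thus it remains to exclude the one sub-case in which $g=n-1$, the $(n-1)$-cycle is critical, and node $n$ is \emph{not} critical; I will do this by showing that in that sub-case $T_c(A)<\wiel(n)$, contradicting the hypothesis.

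This exclusion is the crux and the main obstacle. In that sub-case $A_2<CSR[A_1]$ implies $\crit(A)=\crit(A_1)$ is the single $(n-1)$-cycle on $\{1,\dots,n-1\}$ and $CS^tR[A]=CS^tR[A_1]$; so $\gamma=n-1$, the critical indices are $1,\dots,n-1$, and $T_c(A)=\max_{(a,b)\neq(n,n)}T_{ab}(A)$ for the entrywise transients $T_{ab}$. For each such entry one endpoint is critical, so every walk of length exactly $t$ from $a$ to $b$ lies in $\walkslennode{a}{b}{t,\gamma}{\crit(A)}$; by the optimal walk interpretation (Proposition~\ref{p:CSRprops}) this forces $(A^t)_{ab}\le(CS^tR)_{ab}$, and since $A\ge A_1$ I obtain the squeeze $(A_1^t)_{ab}\le(A^t)_{ab}\le(CS^tR[A_1])_{ab}$, whence $T_{ab}(A)\le T_{ab}(A_1)$. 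It therefore suffices to bound the transients of $A_1$ alone. After a visualizing scaling (critical arcs of weight $\0$, the $n$-cycle of negative weight $W_n$), every $a\to b$ walk splits into a base path plus $x$ copies of the $n$-cycle and $y$ copies of the $(n-1)$-cycle, of length $\ell_{ab}+xn+y(n-1)$ and weight $(\text{base})+xW_n$; a direct count of when the optimal length-$\equiv t$ weight becomes realizable at length exactly $t$ gives $T_{ab}(A_1)=\ell_{ab}+(n-1)(n-2)$. Because node $n$ is entered only through $(n-1)\to n$ and left only through $n\to1$, the base length $\ell_{ab}$ equals $n$ only for $(a,b)=(n,n)$ (recovering $T_{nn}=n+(n-1)(n-2)=\wiel(n)$) and is at most $n-1$ once $a$ or $b$ lies in $\{1,\dots,n-1\}$; hence every critical row/column entry satisfies $T_{ab}(A_1)\le(n-1)+(n-1)(n-2)=(n-1)^2<\wiel(n)$, so $T_c(A)\le(n-1)^2<\wiel(n)$, the required contradiction. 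I expect the delicate points to be the justification of the squeeze (that $A_2<CSR[A_1]$ leaves $\crit(A)$ and $CS^tR$ unchanged) and the exact base-path transient count, which is the arithmetic heart of the argument; the case analysis and the passage from $T_1$ to $T_c$ are then routine.
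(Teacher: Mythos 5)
Your proposal is correct, but the necessity direction takes a genuinely different route from the paper's. The paper disposes of the one problematic sub-case ($g=n-1$ with the $(n-1)$-cycle critical but the Hamiltonian cycle not) by invoking Theorem~\ref{t:CritColDM}: since $\DM(n-1,n)=\wiel(n)$, attaining the bound on a critical row forces the critical graph itself to have index $\wiel(n)$, which is only possible if $\crit(A)=\digr(A_1)$, so the Hamiltonian cycle is automatically critical --- the heavy lifting is thus delegated to the interesting-walk machinery behind the Dulmage-Mendelsohn characterization. You instead exclude that sub-case directly: the squeeze $(A_1^t)_{ab}\le (A^t)_{ab}\le (CS^tR[A_1])_{ab}$, valid for all $t$ when $a$ or $b$ is critical (using Lemma~\ref{l:perturbation} to get $\crit(A)=\crit(A_1)$ and $CS^tR[A]=CS^tR[A_1]$, plus the optimal-walk interpretation), gives $T_{ab}(A)\le T_{ab}(A_1)$, and your cycle-decomposition count in the weighted Wielandt digraph is sound: in the residue class forcing $x\in\{0,\dots,n-2\}$ copies of the $n$-cycle, the optimal weight $w_0+xW_n$ is first realizable at exact length $\ell_{ab}+xn$, the worst class $x=n-2$ yields the threshold $\ell_{ab}+(n-1)(n-2)$, and $\ell_{ab}\le n-1$ for every pair other than $(n,n)$ gives $T_c(A)\le (n-1)^2<\wiel(n)$, the desired contradiction (with $(n,n)$ consistently recovering $\wiel(n)$). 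One caveat: your claimed equality $T_{ab}(A_1)=\ell_{ab}+(n-1)(n-2)$ is in general only an upper bound, since when the base path uses the arc $(n-1,1)$ it can be rerouted through node $n$, realizing weight $xW_n$ already at length $\ell_{ab}+xn-(n-1)$ and lowering the entrywise transient; but you only use the inequality, so nothing breaks. The sufficiency direction coincides with the paper's (perturbation lemma, then all nodes critical so $\bnacht=\0$ and $T_c(A)=T_1(A)$). As for what each approach buys: the paper's argument is shorter given that Theorem~\ref{t:CritColDM} is already proved, and it yields the stronger structural conclusion that in the $g=n-1$ case the entire Wielandt digraph is critical; yours is self-contained, bypasses Section~\ref{sec:crit}'s dependence on the DM machinery for this step, and produces the sharp quantitative bound $(n-1)^2$ on critical row and column transients in the excluded sub-case.
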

In contrast with the previous case, the critical graph need not have index~$\wiel(n)$.
It can also be a Hamiltonian cycle, and only $\digr(A_1)$ has index~$\wiel(n)$.

\subsection{Overview of Proofs}\label{sec:PfOverview}
Each of the next section is devoted to the proof of one of the main theorems 
(Theorem~\ref{t:mainres} and~\ref{t:wiel}). The proof's strategy is the same for both theorems.
It relies on the classical interpretation of entries of powers as weight of walks
(cf. Equations~\eqref{e:At} and~\eqref{e:representation}).

The necessity of the conditions is proved in 3 steps~:
\begin{itemize}
 \item Firstly, we use (slight refinements of) the results of~\cite{MNS} to prove properties of the critical graph.
 (Proposition~\ref{p:critgraph})
 \item Then we introduce a special type of walks with given start and end nodes, which we call 'interesting walks'. Loosely speaking, these walks are optimal in terms of both weight and length (Definition~\ref{def:interesting}). The structure of interesting walks is then studied in  Propositions~\ref{p:twiceopt}, \ref{p:W0} and~\ref{p:wielwalk}. In particular, interesting walks contain a Hamiltonian cycle that gives the renumbering of nodes up to where to start. The start is determined by the critical nodes.
 \item Finally, we use our results on the structure of interesting walks to show that if one condition is not fulfilled, then one can build new shorter walks and thus the interesting walk would not be optimal, either in terms of weight, or length.
\end{itemize}

The proof of the sufficiency is based on the fact that the index of $\digr(A_1)$ reaches the bound and the following perturbation lemma,
which will also be used in the proof of necessity and we think is interesting for its own sake.
We write $A<B$ if for all $i,j$, $a_{i,j}<b_{i,j}$ or $a_{i,j}=b_{i,j}=-\infty$.
\begin{lemma}[Perturbation lemma]
\label{l:perturbation}

If $A=A_1\oplus A_2$ with $A_1,A_2\in\Rpnn$ such that $A_2<CSR[A_1]$,
then, we have $\lambda(A)=\lambda(A_1)$, $\crit(A)=\crit(A_1)$,  $CS^tR[A]=CS^tR[A_1]$ for all $t$ and $T_1(A)=T_1(A_1)$.
\end{lemma}

This lemma will be proved, together with all other statements, in the next section. 

\section{Matrices attaining the Dulmage-Mendelsohn Bound}\label{sec:dm}

This section will be devoted to the proof of Theorem~\ref{t:mainres}.
In the rest of the work we assume $\lambda(A)=\1$
(because neither $T_1(A)$, nor the conditions of the theorem are modified if~$A$ is multiplied by a scalar) and set
$g:=\g(\crit(A))$.

Observe that $\lambda(A)=\1$ ensures that all closed walk have nonpositive weight,
so that removing a closed subwalk from a given walk can only increase its weight. A fact that we will use extensively in this paper.

\subsection{Perturbation Lemmas}

In this section, we prove two lemmas, to be used in both ways of the equivalence.
The first one was announced at the end of section~\ref{sec:thms}.

If for two matrices $A=(a_{i,j})_{i,j=1}^n$ and $B=(b_{i,j})_{i,j=1}^n$ we have $a_{i,j}\leq b_{i,j}$ 
for all $i,j$ and that
$a_{i,j}=b_{i,j}\Rightarrow a_{i,j}=b_{i,j}=-\infty$ then we write $A<B$. Observe that if $A<B$ and $C\le D$
then $A\otimes C<B\otimes D$ and~ $C\otimes A<D\otimes B$.

\begin{proof}[Proof of Perturbation Lemma~\ref{l:perturbation}]

If  $\lambda(A_1)=\0$, then $A_2=\0$, $A=A_1$ and there is nothing to prove. Otherwise, 
since $A_2<CSR[A_1]$ is invariant under $A\mapsto \lambda\otimes A$, we assume without loss of generality that $\lambda(A_1)=\1$.

To prove the first equalities, consider a diagonal matrix~$D$ that provides a visualization
scaling for $A_1$. Then we obtain
$D^{-}AD=D^{-}A_1D\oplus D^{-}A_2D$
where $D^{-}A_2D<D^{-}CSR[A_1]D\leq \1$. So $D^{-}AD$ is visualized and $(D^{-}AD)_{i,j}=\1$ if and only if 
 $(D^{-}A_1D)_{i,j}=\1$, which implies that $\lambda(A)=\1=\lambda(A_1)$ and $\crit(A)=\crit(A_1)$, which are the 
first two equalities in the statement of Lemma~\ref{l:perturbation}.

To prove the remaining two equalities, let us first prove the following statement by induction:
\begin{equation}
\label{e:induction}
A^t=A_1^t\oplus R_t,\quad \text{where}\quad R_t<CS^tR[A_1]. 
\end{equation}
For $t=1$, set~$R_1=A_2$.

Suppose that~\eqref{e:induction} holds.
Let us prove that it also holds when $t$ is
replaced by $t+1$. We have
\begin{equation}
\label{e:induction1}
A^{t+1}=(A_1\oplus R_1)(A_1^t\oplus R_t)=
A_1^{t+1}\oplus R_1 A_1^t\oplus A_1 R_t\oplus R_1R_t.
\end{equation}
We bound from above the last three terms on the right-hand side of~\eqref{e:induction1}. 

We have:
\begin{itemize}
\item[1.] $R_1A_1^t<CSR[A_1]A_1^t= CS^{t+1}R[A_1]$. 
by Proposition~\ref{p:CSRprops} part~\eqref{CSR-pseudoGpe}
\item[2.] $A_1R_t<A_1CS^tR[A_1]= CS^{t+1}R[A_1]$, by Proposition~\ref{p:CSRprops} part~\eqref{CSR-pseudoGpe}.
\item[3.] $R_1R_t< CSR[A_1] CS^tR[A_1]=CS^{t+1}R[A_1]$, by Proposition~\ref{p:CSRprops} part~\eqref{CSR-group}.
\end{itemize}

Thus $A^{t+1}= A_1^{t+1}\oplus R_{t+1}$ where $R_{t+1}= R_1 A_1^t\oplus A_1 R_t\oplus R_1R_t$ satisfies
$R_{t+1}<CS^{t+1}R$.

Observe that $A^t=A_1^t$ for $t\geq T_1(A_1)$. Indeed, for any such $t$,
\begin{equation*}
\begin{split}
A^t&=A_1^t\oplus R_t
=CS^tR[A_1]\oplus B^t[A_1]\oplus R_t\\
&= CS^tR[A_1]\oplus B^t[A_1]=A_1^t. 
\end{split}
\end{equation*}

From this equality, the periodicity of $CSR$ and 
the observation that 
$\lim_{k\to\infty} ((B[A_1])^{t+k\sigma})=\0$ since $\lambda(B[A_1])<\1$, 
we deduce that $A$ and $A_1$ have the same $CSR$ since
$$ CS^tR[A]=\lim_{k\to\infty} A^{t+k\sigma}=\lim_{k\to\infty} A_1^{t+k\sigma}=CS^tR[A_1],$$
where $\sigma$ is the cyclicity of $\crit(A)$. Thus $CS^tR[A]=CS^tR[A_1]$ for all $t$. 

To prove the remaining equality $T_1(A)=T_1(A_1)$, we first observe that 
$(B^t[A])_{i,j}\leq (A^t)_{i,j}\leq (CS^tR[A]\oplus B^t[A])_{i,j}$, and hence
\begin{equation*}
(A^t)_{i,j}\neq (CS^tR[A]\oplus B^t[A])_{i,j}\Leftrightarrow 
(A^t)_{i,j}<(CS^tR[A])_{i,j}.
\end{equation*}
Next we use that $(B^t[A_1])_{i,j}\leq (A_1^t)_{i,j}\leq (CS^tR[A_1]\oplus B^t[A_1])_{i,j}$, that
 $A^t=A_1^t\oplus R_t$ where $R_t<CS^tR[A_1]$, and that
$CS^tR[A_1]=CS^tR[A]$, so that we have the following equivalences: 
\begin{equation*}
\begin{split}
& (A^t)_{i,j}\neq (CS^tR[A]\oplus B^t[A])_{i,j}\Leftrightarrow
(A^t)_{i,j}<(CS^tR[A])_{i,j}\Leftrightarrow
(A_1^t)_{i,j}\oplus (R_t)_{i,j}<(CS^tR[A_1])_{i,j}\\
&\Leftrightarrow (A_1^t)_{i,j}<(CS^tR[A_1])_{i,j}
\Leftrightarrow (A_1^t)_{i,j}\neq (CS^tR[A_1]\oplus B^t[A_1])_{i,j},
\end{split}
\end{equation*}
thus $T_1(A)=T_1(A_1)$.
\end{proof}


\begin{lemma}
\label{l:CSRA=CSRAB}
Let~$A_1,B_1, A_2$ be defined from the same matrix~$A$ by Equations~\eqref{e:A1},~\eqref{e:B1} and \eqref{e:A2}.
Assume that Conditions~1 to~5 of  Theorem~\ref{t:mainres} are satisfied, that is 
$\crit(A)$ is strongly connected and contains all edges of $1\cdots g1$, $g$ and $n$ are coprime, $A_2<CSR[A_1]$
and~$B_1$ satisfies Condition~\ref{CB1}.
Then $CS^tR[A_1]=CS^tR[A_1\oplus B_1]$ for all $t$.
\end{lemma}
\begin{proof}


As in the first paragraph of the proof of Lemma~\ref{l:perturbation}
consider a diagonal matrix providing a visualization scaling for 
$A_1$, assuming without loss of generality that $\lambda(A_1)=\1$. We then have $D^{-}A_2D<D^{-}CSR[A_1]D\leq\1$,
and this shows that all arcs of $\digr(A_2)$ are non-critical, hence $\crit(A)=\crit(A_1\oplus B_1)$.
In particular, $\crit(A_1\oplus B_1)$ is strongly connected and contains all edges of $1\cdots g1$. The same is true about
$\crit(A_1)$, and we also have $\lambda(A_1\oplus B_1)=\1$.

Recasting the above
statement about CSR in terms of walks with Proposition~\ref{p:representation} we have to prove for any $k,l\in\{1,\ldots,n\}$ that
\begin{equation*}
\max\{p(W)\colon W\in\walkslennode{k}{l}{t,g}{Z_0}[A_1]\}=
\max\{p(W)\colon W\in\walkslennode{k}{l}{t,g}{Z_0}[A_1\oplus B_1]\},
\end{equation*}
Since $A_1\leq A_1\oplus B_1$, we have
the inequality
\begin{equation*}
\max\{p(W)\colon W\in\walkslennode{k}{l}{t,g}{Z_0}[A_1]\}\leq 
\max\{p(W)\colon W\in\walkslennode{k}{l}{t,g}{Z_0}[A_1\oplus B_1])\},
\end{equation*}
To prove the opposite, we need to take an arbitrary walk 
$W\in\walkslennode{k}{l}{t,g}{Z_0}[A_1\oplus B_1])$ and prove that there exists a walk
$W'\in\walkslennode{k}{l}{t,g}{Z_0}[A_1])$ such that $p(W)\leq p(W')$.

There are two kinds of arcs in $W$ that may not be in $\digr(A_1)$.\\

A. $ij$ with $j>i+1>g+1$ and $j\equiv_g i+1$. In this case, 
we can replace $ij$ by the path $i(i+1)\cdots j$.
The resulting walk visits even more nodes and hence will go through a node of $\crit(A_1)$.
It has the same length modulo $g$. Due to Condition~\ref{CB1} of Theorem~\ref{t:mainres}, its weight
is not smaller than $p(W)$. Thus we can assume that $W$ does not contain such arcs.

B. $ij$ with $j<i$ and $j\equiv_g i+1$. 

Since we assumed that~$W$ contains no arc $kl$ s.t. $l>k+1$, the only arc that go from a node to a larger node are of type $k,k+1$,
thus if a subwalk of $W$ goes from~$i$ to~$j>i$ it goes through all nodes numbered between~$i$ and~$j$.

If $W$~goes to~$Z_0$ after arc~$ij$, it has to go through~$i$ again before reaching~$Z_0$, because the only arc to reach~$Z_0$
from~$\digr(B_1)$ is $n1$. In this case, define $W_1$ as the closed subwalk that starts with the arc~$ij$ and follows~$W$ until it goes back to~$i$.

If $W$~does not go to~$Z_0$ after arc~$ij$, it has to come from~$Z_0$ before, so it has to go through~$g,g+1$ which is the only arc leaving~$Z_0$.
So, it has been in~$j$ before reaching~$i$ and arc~$ij$.
Then, define $W_1$ as the closed subwalk that starts with the last occurrence of~$j$ before arc~$ij$ and follows~$W$ until it goes back to~$i$. 

In both cases, $W_1$ lives on~$\digr(B_1)$ so its length is divisible by~$g$ and it can be removed from~$W$.

The resulting walk has the same length modulo $g$, goes through a node of $Z_0$ and has larger weight than $p(W)$.\\ 

Iterating the process, we build the~$W'$ we are looking for.
\end{proof}



\subsection{Proof of Sufficiency}
Let us now prove that Conditions 1--6 imply~$T_1(A)=\DM(g,n)$.
We assume that the conditions are satisfied.

By Lemmas~\ref{l:perturbation} and~\ref{l:CSRA=CSRAB}, we  have $CSR[A]=CSR[A_1\oplus B_1]=CSR[A_1]$.

As $A_2<CSR[A_1\oplus B_1]$ (Condition~\ref{CA2} and 
Lemma~\ref{l:CSRA=CSRAB}), by Lemma~\ref{l:perturbation} we have $T_1(A)=T_1(A_1\oplus B_1)$ so 
we can assume that $A_2=\bf{0}$ and we do it from now on.

Entry $A^{\DM(g,n)-1}_{g+1,n}$ is the largest weight of a walk $W$ from $g+1$ to $n$ with length $\DM(g,n)-1$.
Let us prove $p(W)<(CS^{n-1}R)_{g+1,n}[A]$, which ensures that~$T_1(A,B)\ge \DM(g,n)$, 
since in this case  $A^t_{g+1,n}<(CS^tR)[A]\oplus B^t[A])_{g+1,n}$ for 
$t=\DM(g,n)-1$ (also recalling that $(CS^tR)[A]=(CS^{n-1}R)[A]$ by the periodicity
of $CSR$).
The other inequality follows from Theorem~\ref{t:bounds}.

{\bf Case 1.}
 If $W$ does not go through $Z_0$, then it is a walk on $\digr(B_1)$ and $p(W)= (B_1)_{g+1,n}^{\DM(g,n)-1}$.
Using Condition~\ref{CB2}, we conclude that $p(W)<(CS^{n-1}R)_{g+1,n}[A]$.

{\bf Case 2.} 
 Assume now that $W$ goes through $Z_0$ and contains an arc $(i,j)$ such that 
 $ j>i+1$, $ i>g$ and $j\equiv_g (i+1)$. Then we can replace this arc by the path $i(i+1)\cdots j$ thus obtaining a new walk $W'$.
Using Condition~\ref{CB1}, we conclude that $p(W)<p(W')$. But, we also have $p(W')\leq (CS^{n-1}R)_{g+1,n}[A]$, since $W'$
visits a node of $Z_0$ and has length $n-1$ modulo $g$. Thus $p(W)<p(W')\leq  (CS^{n-1}R)_{g+1,n}[A]$.  

{\bf Case 3.}
 Assume that $W$ goes through $Z_0$ and does not contain an arc $(i,j)$ such that 
$ j>i+1$, $ i>g$ and $j\equiv_g (i+1)$. 
Then $W$ can be decomposed into a path from 
$g+1$ to $n$, and some cycles.
Since we assumed that $A_2=\0$ and $W$ 
contains no arc with $j>i+1>g$,
the only path from~$g+1$ to~$n$ is the path that follows the numbers and has length $n-1-g$,
so that the total length of the cycles is  $\DM(g,n)-1- (n-1-g)=g(n-1)$.

Since $W$ goes through a node of $Z_0$, and $1\cdots n1$ is the only cycle that connects~$Z_0$ and $\digr(B_1)$,
the cycle decomposition of $W$ contains at least one copy of
the cycle $1\cdots n1$. 
But, again since $A_2=\0$, $1\cdots n1$ is the only cycle whose length~$n$ is not a 
multiple of $g$. As this length is, moreover, coprime with $g$,
the walk should contain at least $g$ copies of the cycle $1\cdots n1$. But then their total length is $gn>g(n-1)$, a contradiction.
Hence this case is impossible, and the attainment of Dulmage-Mendelsohn bound has been proved for all 
possible cases.

\subsection{Critical Graph}
The end of the section will be devoted to the proof of the necessity of Conditions 1--6,
along the lines presented in Section~\ref{sec:PfOverview}.

In this subsection, we prove the following proposition.

\begin{proposition}
\label{p:critgraph}
If $T_1(A)=\DM(g,n)$, then $\crit(A)$ is strongly connected and contains only one cycle of length $g$ up to choice of its first node.
\end{proposition}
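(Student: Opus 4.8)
The plan is to prove the contrapositive of each half, using the optimal walk interpretation of $CS^tR$ (Proposition~\ref{p:CSRprops}\eqref{CSR-walks}) together with the upper bound $T_1(A)\le\DM(g,n)$ from Theorem~\ref{t:bounds}. The strategy throughout is: if the critical graph is ``too connected'' or ``too rich in short critical cycles,'' then the CSR term stabilizes earlier than $\DM(g,n)$, forcing $T_1(A)<\DM(g,n)$. So I would assume $T_1(A)=\DM(g,n)$ and derive the two structural conclusions (strong connectedness of $\crit(A)$, and uniqueness of the length-$g$ cycle) by showing their failure strictly lowers $T_1$.

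First I would analyze the quantity $T_1(A)$ in terms of the two pieces of the weak CSR expansion $A^t=CS^tR\oplus\bnacht^t$. The threshold $T_1(A)$ is governed by how long it takes for every entry $(A^t)_{ij}$ to be realized by a walk that passes through the critical graph (so that it coincides with $(CS^tR)_{ij}$), as against walks avoiding critical nodes (captured by $\bnacht^t$). Since $\lambda(A)=\1$ and all closed walks have nonpositive weight, inserting a critical cycle into a walk is cheap, and the binding constraint on $T_1$ is a combinatorial reachability/length question: from a node $i$ one must be able to reach the critical graph, traverse it to adjust the length modulo~$\gamma$, and proceed to $j$, all within a walk of the prescribed length. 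The Dulmage--Mendelsohn value $\DM(g,n)=g(n-2)+n$ is exactly the extremal length for such a construction when there is a single length-$g$ critical cycle sitting at the ``far end'' of a long transient path, mirroring the unweighted Dulmage--Mendelsohn digraph.

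The core technical step is a \emph{girth/cyclicity argument}. If $\crit(A)$ is \emph{not} strongly connected, it splits into at least two strongly connected components, and one shows that the relevant walk lengths needed to hit a critical node and correct the residue modulo~$\gamma$ become strictly smaller than in the single-cycle extremal configuration, so $T_1(A)<\DM(g,n)$; the cleanest way is to bound, for each pair $(i,j)$, the first time $t$ at which a through-critical walk of length $t$ exists and equals the optimal weight, and to check this maximum over $(i,j)$ drops below $\DM(g,n)$. Similarly, if there are \emph{two distinct} critical cycles of length~$g$ (not merely reparametrizations of one cycle), then there are two independent ways to adjust length modulo~$g$, which again shortens the time to reach the stabilized regime and yields $T_1(A)<\DM(g,n)$. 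In both cases the comparison reduces to counting: the maximal ``detour'' length one is forced to take is strictly less than $g(n-2)+n$ once connectivity or cycle-uniqueness fails.

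The main obstacle I anticipate is making the length-counting estimates tight and uniform over all node pairs $(i,j)$: one must argue that the \emph{worst} pair still stabilizes before $\DM(g,n)$, not merely that some pair does. This requires a careful accounting of how many nodes a length-correcting walk must visit, how a second critical cycle or a second s.c.c.\ provides a shorter residue-adjustment, and that no other pair compensates by being slower. I would handle this by reducing to the unweighted (Boolean) index bounds of Dulmage--Mendelsohn and Shao~\cite{Shao} for the critical-graph skeleton, invoking that the extremal unweighted digraph for $\DM(g,n)$ is already known to require a unique short cycle in a strongly connected graph; the weighted case then follows because $\lambda(A)=\1$ makes critical arcs weight-neutral, so the length bottleneck is purely combinatorial and inherited from the Boolean reachability structure. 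The delicate point is transferring the ``strict decrease'' from the Boolean index to the max-plus threshold $T_1$, which is where I expect the proof to spend most of its effort.
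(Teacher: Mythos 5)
Your top-level strategy (contrapositive: if $\crit(A)$ is disconnected or has two $g$-cycles, then $T_1(A)<\DM(g,n)$) is the same as the paper's, but the engine you propose for the strict decrease --- reducing to the Boolean index results of Dulmage--Mendelsohn and Shao for a ``critical-graph skeleton'' and arguing that the bottleneck is ``purely combinatorial'' because $\lambda(A)=\1$ makes critical arcs weight-neutral --- does not work. The quantity $T_1(A)$ compares $(A^t)_{ij}$ against $(CS^tR)_{ij}$, and the competing walks realizing $(A^t)_{ij}$ may use \emph{non-critical} arcs of arbitrary weight; weight-neutrality holds only on critical arcs. Concretely, the extremal matrices of Theorems~\ref{t:mainres} and~\ref{t:wiel} may have $A_2\neq\0$, i.e.\ $\digr(A)$ can contain many extra arcs (with weights below $CSR[A_1]$), so its Boolean index can be far below $\wiel(n)$ or $\DM(g,n)$ while $T_1(A)$ still attains the bound; conversely, the index of the unweighted $\crit(A)$ neither upper- nor lower-bounds $T_1(A)$ in the way your transfer would need. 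The ``delicate point'' you flag at the end --- transferring strict decrease from the Boolean index to $T_1$ --- is thus not merely where effort is spent; it is a gap the Boolean reduction cannot close.

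What the paper does instead is entirely on the weighted side, via two quantitative tools you do not have. First, to rule out critical components of girth $g_\nu<g$, it uses the s.c.c.-wise weak CSR expansion (Propositions~\ref{p:representationSCC} and~\ref{p:T1ABN}), giving $T_1(A)\le\max_\nu \DM(g_\nu,n_\nu)<\DM(g,n)$ since either $g_\nu<g$ or $n_\nu<n$. Second, and crucially, it uses the cycle removal threshold of the subgraph $\subcrit$ consisting of \emph{all} critical $g$-cycles, with $n_1$ nodes: Propositions~\ref{p:TcrToT1} and~\ref{p:TcRLin} yield
\begin{equation*}
T_1(A)\;\le\; T_{cr}^{g}(\subcrit)-g+1\;\le\;(gn+n-n_1-1)-g+1\;=\;\DM(g,n)+g-n_1,
\end{equation*}
a bound that \emph{decreases in $n_1$} and is valid for weighted walks because $\lambda(A)=\1$ lets one remove closed subwalks without decreasing weight. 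The proposition then follows from one elementary observation your counting sketch lacks: two distinct $g$-cycles cannot share the same node set (that would create a shorter cycle, contradicting the girth), so a second $g$-cycle forces $n_1>g$, and disconnection of $\crit(A)$ forces $n_1\ge 2g$; either way $T_1(A)<\DM(g,n)$. This $n_1$-dependent bound is exactly what makes the estimate uniform over all pairs $(i,j)$ --- the obstacle you correctly identified but left unresolved.
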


To prove Proposition~\ref{p:critgraph}, we will use techniques from~\cite{MNS} related to CSR expansions
and walks. We will first recall the main statements that will be required, and then the proof of 
Proposition~\ref{p:critgraph} will be given after the statement of Proposition~\ref{p:TcrToT1}.

Proposition~\ref{p:representation} below is an extended version of Proposition~\ref{p:CSRprops} part~(i).
In particular, it builds on the idea that the CSR terms in Definition~\ref{def:CSR} 
can be defined using any completely reducible subgraph of $\crit(A)$ instead of the full $\crit(A)$.

\begin{proposition}[cf. \cite{MNS}, Theorem 6.1]
\label{p:representation}
Let~$A\in\Rpnn$ be a matrix with~\mbox{$\lambda(A)=\1$} and 
$C,S$ and $R$ be
the CSR terms of~$A$ with respect to some completely reducible
subgraph~$\subcrit$ of the critical graph~$\crit(A)$.

Let~$\gamma$ be a multiple of~$\gamma(\subcrit)$ 
and~$\mN$ a set of some
nodes of $\subcrit$ that contains one node of every s.c.c.\ of~$\subcrit$.

Then we have, for any~$i,j$ and~$t\in\Nat$:
\begin{equation}\label{e:representation}
(CS^tR)_{i,j}=p\left(\walkslennode{i}{j}{t,\gamma}{\mN}\right)
\end{equation}
where $\walkslennode{i}{j}{t,\gamma}{\mN}:=\left\{W\in\walksnode{i}{j}{\mN}
\,\big|\,l(W)\equiv t\pmod\gamma\right\}$
\end{proposition}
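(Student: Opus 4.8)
The plan is to prove the identity by establishing the two inequalities between $(CS^tR)_{ij}$ and $p(\walkslennode{i}{j}{t,\gamma}{\mN})$ through explicit walk surgery, after some normalizing reductions. First I would reduce to the case where $A$ is strictly visualized: since $\lambda(A)=\1\neq\0$, such a scaling $D^{-1}AD$ exists, and under any diagonal scaling both sides of~\eqref{e:representation} change by the same additive constant $d_j-d_i$ (walk weights telescope, and $C,S,R$ conjugate by $D$), so the identity is scaling-invariant. After visualization every arc weight is $\le\0$, every critical arc weight is $=\0$, and hence every closed walk inside $\crit(A)$ (in particular inside any s.c.c.\ $\subcrit_c$ of $\subcrit$) has weight exactly $\0$; this is the fact that makes inserting and deleting critical closed subwalks weight-preserving. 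I would also use periodicity (Proposition~\ref{p:CSRprops}\eqref{CSR-period}) to replace $t$ by any large representative of its class modulo $\gamma$, which changes neither $CS^tR$ nor the walk set, whose defining condition is only $l(W)\equiv t\pmod\gamma$. Finally, expanding the product gives $(CS^tR)_{ij}=\bigoplus_{k,l\in\subcrit} m_{ik}\otimes(S^t)_{kl}\otimes m_{lj}$, where $m_{ik}$ is the maximal weight of an $i\to k$ walk of length a multiple of $\gamma$, and $(S^t)_{kl}$ equals $\0$ exactly when there is a length-$t$ walk from $k$ to $l$ inside a single $\subcrit_c$, and is $\0$ (the max-plus zero) otherwise.

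For the inequality $(CS^tR)_{ij}\le p(\walkslennode{i}{j}{t,\gamma}{\mN})$ I would take a maximizing pair $k,l$ (necessarily in one component $\subcrit_c$), realize $m_{ik}$, $(S^t)_{kl}$, $m_{lj}$ by walks $U$, $V$, $U'$, and concatenate them into a walk $i\to j$ of weight $(CS^tR)_{ij}$ and length $\equiv t\pmod\gamma$. This walk meets $\subcrit_c$ but not necessarily the representative $v_c\in\mN\cap\subcrit_c$, so I would splice in a closed critical walk $k\to v_c\to k$ inside $\subcrit_c$ to force a visit to $v_c$, and then a further closed critical walk at $v_c$ to repair the length back to $\equiv t\pmod\gamma$. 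Both splices have weight $\0$ and hence preserve the total weight, so the resulting walk lies in $\walkslennode{i}{j}{t,\gamma}{\mN}$ and witnesses the inequality.

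For the reverse inequality I would start from an arbitrary $W\in\walkslennode{i}{j}{t,\gamma}{\mN}$ passing through $v\in\mN\cap\subcrit_c$ and split it as $W=W_1W_2$ with $W_1\colon i\to v$ of length $a$ and $W_2\colon v\to j$ of length $b$, where $a+b\equiv t\pmod\gamma$. The difficulty is that $a$ and $b$ need not be multiples of $\gamma$, so $W_1$ and $W_2$ do not directly bound $m_{iv}$ and $m_{vj}$, and there need not be a length-$t$ closed walk at $v$. I would absorb the residues inside $\subcrit_c$: append to $W_1$ a walk inside $\subcrit_c$ from $v$ to a node $k$ of length $\equiv-a\pmod\gamma$, and prepend to $W_2$ a walk inside $\subcrit_c$ from a node $l$ to $v$ of length $\equiv-b\pmod\gamma$ (such $k,l$ and such lengths exist because, in a strongly connected digraph of cyclicity $\gamma_c=\gamma(\subcrit_c)\mid\gamma$, walks from a fixed node reach each node only at lengths in a fixed class modulo $\gamma_c$, and all sufficiently long lengths in that class are attained). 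Being critical, these extensions have weight $\0$, so $m_{ik}\ge p(W_1)$ and $m_{lj}\ge p(W_2)$. A congruence computation using additivity of the length-residue $d(\cdot,\cdot)\pmod{\gamma_c}$ gives $d(k,l)\equiv a+b\equiv t\pmod{\gamma_c}$, whence (with $t$ taken large) a length-$t$ walk $k\to l$ exists in $\subcrit_c$, i.e.\ $(S^t)_{kl}=\0$. Therefore $(CS^tR)_{ij}\ge m_{ik}\otimes(S^t)_{kl}\otimes m_{lj}\ge p(W_1)+p(W_2)=p(W)$, and taking the maximum over $W$ closes the argument.

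The main obstacle is exactly this modular length bookkeeping: the middle factor $S^t$ forces a walk of length \emph{exactly} $t$ inside a critical component, so the proof must route the uncontrolled prefix and suffix lengths of the given walk into that component in a way that both preserves weight (guaranteed by visualization, via the weight-$\0$ critical closed walks) and produces the correct length residues modulo $\gamma$. Making the cyclicity statements precise---that between any two nodes of a strongly connected digraph of cyclicity $\gamma_c$ all sufficiently long walks of the admissible residue class exist, and that these residues add---is the technical heart; once these standard facts about powers of strongly connected digraphs are in hand, the two inequalities go through as above. A minor point to verify is that passing from ``$W$ meets $\subcrit$'' to ``$W$ meets the single representative $v_c$'' costs nothing, which is precisely what the rerouting in the $\le$ direction ensures.
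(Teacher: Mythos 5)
The paper never proves this proposition itself: it is imported verbatim from \cite{MNS} (Theorem~6.1), and the closely related Proposition~\ref{p:CSRprops}\eqref{CSR-walks} is likewise handled by citation to \cite{SS-11}. So there is no in-paper argument to compare against; judged on its own merits, your proposal is a correct reconstruction along the same lines as the cited sources: normalize by a visualization scaling so that every walk inside a component $\subcrit_c$ of $\subcrit$ has weight $\1$, expand $(CS^tR)_{ij}=\max_{k,l}m_{ik}\otimes(S^t)_{kl}\otimes m_{lj}$, and convert both inequalities into walk surgery with cyclic classes modulo $\gamma_c=\gamma(\subcrit_c)$ doing the modular bookkeeping. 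Your two splices in the $\le$ direction are sound: the detour $k\to v_c\to k$ has length $L\equiv 0\pmod{\gamma_c}$, and a repairing closed walk at $v_c$ of length $\equiv -L\pmod{\gamma}$ exists because $\gamma_c\mid\gamma$, $\gamma_c\mid L$, and all sufficiently large multiples of $\gamma_c$ occur as closed-walk lengths at $v_c$. The residue-absorbing extensions in the $\ge$ direction, and the additivity computation giving $d(k,l)\equiv a+b\equiv t\pmod{\gamma_c}$, are also exactly right. (Note a notational slip: where you describe $(S^t)_{kl}$ you write $\0$ for both alternatives; the first should be $\1$, i.e.\ the real number $0$ under visualization.)

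One soft spot deserves attention: you invoke Proposition~\ref{p:CSRprops}\eqref{CSR-period} to replace $t$ by a large representative of its class modulo $\gamma$, but that proposition is stated only for the CSR terms with respect to the full critical graph $\crit(A)$, whereas here $C,S,R$ are built from an arbitrary completely reducible subgraph $\subcrit$; as cited it does not cover your case, and periodicity of the subgraph CSR is essentially a consequence of the very identity you are proving, so one must be careful about circularity. The gap is minor and repairable inside your own framework without any appeal to periodicity: in the $\ge$ direction, rather than taking $t$ large, build the middle walk of length \emph{exactly} $t$ by walking $t$ steps forward inside $\subcrit_c$ from the endpoint $k$ of your first absorbing extension (every node of $\subcrit_c$ has positive out-degree there, since $\subcrit_c$ is a union of critical cycles), landing at some node $l$, and then return from $l$ to $v$ by a critical walk of length $\equiv -b\pmod{\gamma}$; the required congruence $d(l,v)\equiv -b\pmod{\gamma_c}$ follows from the same additivity of residues you already use, so such a return walk exists among the sufficiently long ones. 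With this change the argument works for every $t\in\Nat$ directly, and the periodicity of the subgraph CSR terms drops out as a corollary rather than serving as an ingredient.
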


In particular, CSR terms can be defined using a s.c.c. of $\crit(A)$ rather than the 
whole $\crit(A)$, 
since any s.c.c. of $\crit(A)$ is a completely reducible subgraph of
$\crit(A)$. 
Following~\cite{MNS} let 
$\subcrit_1,\ldots,\subcrit_l$ be the s.c.c.'s of $\crit(A)$ with
node sets $N_1,\ldots N_l$, and let $C_{\subcrit_1}$,
$S_{\subcrit_1}$, $R_{\subcrit_1}$ be the CSR terms defined with
respect to $\subcrit_1$. Let $A^{(1)}=A$, and for $\nu=2,\ldots,l$ define a
matrix $A^{(\nu)}$ by setting the entries of $A$ with
rows and columns in $N_1\cup\ldots\cup N_{\nu-1}$ to $\0$, and let
$C_{\subcrit_\nu}$, $S_{\subcrit_\nu}$, $R_{\subcrit_\nu}$ be the
CSR terms defined with respect to $\subcrit_\nu$ in $A^{(\nu)}$. 
By the {\em dimension}
of $A^{(\nu)}$ we will mean the number of elements in $N\backslash (N_1\cup\ldots\cup N_{\nu-1})$.

\begin{proposition}[\cite{MNS}, Corollary 6.3]
\label{p:representationSCC}
If $\subcrit_1,\cdots,\subcrit_l$ are the s.c.c.'s of~$\crit(A)$,
then we have:
\begin{equation}\label{e:early-exp}
C S^t R=\bigoplus_{\nu=1}^lC_{\subcrit_\nu} S_{\subcrit_\nu}^t R_{\subcrit_\nu}.
\end{equation}
\end{proposition}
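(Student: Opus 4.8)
\emph{The plan is to} establish the identity entrywise, reading each side as a maximal walk weight via Proposition~\ref{p:representation}. The key preliminary observation is that $\crit(A)$ is completely reducible (any arc of $\crit(A)$ between two of its s.c.c.'s would lie on a critical cycle and hence merge them), so $\gamma(\crit(A))$ is the least common multiple of the $\gamma(\subcrit_\nu)$. I would therefore fix once and for all a modulus $\gamma$ that is a multiple of $\gamma(\crit(A))$, so that it is simultaneously a multiple of every $\gamma(\subcrit_\nu)$, and a representative set $\mN=\{v_1,\dots,v_l\}$ with $v_\nu\in N_\nu$. Applying Proposition~\ref{p:representation} with $\subcrit=\crit(A)$ then gives, for all $i,j$, that $(CS^tR)_{ij}=p\big(\walkslennode{i}{j}{t,\gamma}{\mN}\big)$, i.e.\ the maximal weight of a walk in $\digr(A)$ from $i$ to $j$ of length $\equiv t\pmod\gamma$ that meets $\mN$.

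Next I would set up the walk interpretation of each summand. First one checks that $\lambda(A^{(\nu)})=\1$ and that $\subcrit_\nu$ is a subgraph of $\crit(A^{(\nu)})$: zeroing the rows and columns of $N_1\cup\dots\cup N_{\nu-1}$ destroys every cycle through those nodes but leaves the critical cycles of $\subcrit_\nu$ intact, so no cycle of mean exceeding $\1$ is created while one of mean $\1$ survives. Since a walk in $\digr(A^{(\nu)})$ is precisely a walk in $\digr(A)$ that avoids $N_1\cup\dots\cup N_{\nu-1}$, Proposition~\ref{p:representation}, applied to the matrix $A^{(\nu)}$, the single s.c.c.\ $\subcrit_\nu$, and an arbitrary representative node $w\in N_\nu$, shows that $(C_{\subcrit_\nu}S_{\subcrit_\nu}^tR_{\subcrit_\nu})_{ij}$ is the maximal weight of such an avoiding walk that additionally passes through $w$. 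Crucially, this value does not depend on the choice of $w\in N_\nu$. The inequality $\bigoplus_{\nu}C_{\subcrit_\nu}S_{\subcrit_\nu}^tR_{\subcrit_\nu}\le CS^tR$ is then immediate: taking the representative to be $v_\nu$, every walk realizing the $\nu$-th summand passes through $v_\nu\in\mN$ and lies in $\digr(A)$ with unchanged weight, so it is among the walks counted on the left.

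For the reverse inequality I would take a walk $W$ attaining $(CS^tR)_{ij}$. Since $W$ meets $\mN$, the set of indices $\mu$ with $W\cap N_\mu\neq\emptyset$ is nonempty; let $\nu$ be its minimum. By minimality $W$ avoids $N_1\cup\dots\cup N_{\nu-1}$, so $W$ is a walk in $\digr(A^{(\nu)})$, and it passes through some node $w\in N_\nu$. Using this very $w$ as the representative in the previous paragraph yields $p(W)\le(C_{\subcrit_\nu}S_{\subcrit_\nu}^tR_{\subcrit_\nu})_{ij}$, and hence $(CS^tR)_{ij}\le\bigoplus_{\nu}(C_{\subcrit_\nu}S_{\subcrit_\nu}^tR_{\subcrit_\nu})_{ij}$. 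Combining the two inequalities gives the claim entrywise.

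\emph{The hard part} is exactly this reverse inequality: one must route the optimal global walk into the correct truncation $A^{(\nu)}$ and recover its weight through an s.c.c.-local CSR term. What makes it work are two ingredients, namely the freedom to choose the marker node in Proposition~\ref{p:representation} (so that the node $w$ actually visited by $W$ can serve as the representative) and the minimality of $\nu$ (which guarantees that $W$ survives the truncation). The remaining points — the common modulus and the verification that $\lambda(A^{(\nu)})=\1$ — are routine bookkeeping.
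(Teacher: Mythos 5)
The paper offers no proof of this proposition at all --- it is imported from \cite{MNS} (Corollary 6.3) --- but your blind reconstruction is correct and follows the same walk-interpretation route used in that source: both inequalities are read off from Proposition~\ref{p:representation}, applied once to $A$ with $\subcrit=\crit(A)$ and once to each truncation $A^{(\nu)}$ with $\subcrit=\subcrit_\nu$. The two observations you flag are indeed exactly what make the reverse inequality work --- the marker-independence of the representation (so the node $w$ actually visited by the optimal walk can serve as $\mN$) and the minimality of $\nu$ (so that walk survives in $\digr(A^{(\nu)})$) --- and your supporting checks ($\crit(A)$ completely reducible, $\lambda(A^{(\nu)})=\1$, $\subcrit_\nu\subseteq\crit(A^{(\nu)})$, and a common modulus $\gamma$ via $\gamma(\crit(A))=\operatorname{lcm}_\nu\gamma(\subcrit_\nu)$) are all valid.
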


Let us now prove the following bound on $T_1(A)$:

\begin{proposition}
\label{p:T1ABN}
Let $n_{\nu}$ for $\nu=1,\ldots,l$ be the dimension of $A^{(\nu)}$,
and $g_{\nu}$ for $\nu=1,\ldots,l$ be the girth of $\crit_{\nu}$. Then
\begin{equation}
T_1(A)\leq\max_{\nu=1,\ldots,l} \DM(g_{\nu},n_{\nu}).
\end{equation}
\end{proposition}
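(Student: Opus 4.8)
### Proof Proposal for Proposition~\ref{p:T1ABN}

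\textbf{Overall strategy.} The plan is to show that for every $t\geq \max_\nu \DM(g_\nu,n_\nu)$ and every pair of indices $(i,j)$, the identity $(A^t)_{ij}=(CS^tR)_{ij}\oplus(\bnacht^t)_{ij}$ holds. By the optimal walk interpretation $(A^t)_{ij}=p(\walkslen{i}{j}{t})$, this reduces to a statement about optimal walks: every optimal walk of length $t$ from $i$ to $j$ either avoids all critical nodes (and is thus accounted for by $\bnacht^t$), or can be rearranged into a walk of the same length and at least the same weight that passes through a critical node and is therefore captured by $(CS^tR)_{ij}$ via Proposition~\ref{p:representation}. Since $\lambda(A)=\1$, removing closed subwalks never decreases weight, which is the key flexibility we exploit.

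\textbf{Reduction to a single s.c.c.\ via the decomposition.} First I would invoke Proposition~\ref{p:representationSCC} to write $CS^tR=\bigoplus_\nu C_{\subcrit_\nu}S_{\subcrit_\nu}^tR_{\subcrit_\nu}$, so that it suffices to produce, for an optimal walk $W$ through some critical node, a comparable walk that visits a chosen s.c.c.\ $\subcrit_\nu$ and has length $t \pmod{\gamma}$ for an appropriate $\gamma$. The idea is to analyze which s.c.c.\ $\subcrit_\nu$ the walk $W$ meets; working in $A^{(\nu)}$ (where the earlier node sets $N_1\cup\cdots\cup N_{\nu-1}$ are deleted) lets us treat the relevant portion of $W$ as living in a graph of dimension $n_\nu$ whose critical girth is $g_\nu$. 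The point of the $A^{(\nu)}$ construction is precisely that a walk first reaching $\subcrit_\nu$ (and not an earlier component) stays within the surviving nodes, so its length and weight can be analyzed as if in an $n_\nu$-dimensional problem.

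\textbf{The core length-adjustment argument.} The heart of the proof is a Dulmage-Mendelsohn-style bound on how long a walk needs to be before its length can be freely adjusted modulo $\gamma(\subcrit_\nu)$ while keeping it optimal and passing through a fixed critical node. Given an optimal walk $W$ of length $t$ from $i$ to $j$ meeting $\subcrit_\nu$, I would decompose it as an initial segment reaching a critical node $u\in\subcrit_\nu$, a middle traversal, and a final segment leaving for $j$. Using that $W$ has length $t\geq \DM(g_\nu,n_\nu)=g_\nu(n_\nu-2)+n_\nu$, one shows by a pigeonhole/counting argument that enough closed walks of appropriate lengths (in particular a critical cycle of length $g_\nu$) can be inserted or the walk rerouted so that the total length lands in the correct residue class mod $\gamma$, without ever decreasing the weight. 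This is the mechanism by which the bound $\DM(g_\nu,n_\nu)$ enters: it is exactly the threshold past which a walk of that length must contain a critical cycle together with enough slack to perform the length correction. I would model this step on the proof of Theorem~\ref{t:bounds} in~\cite{MNS}, applied componentwise to each $A^{(\nu)}$.

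\textbf{Main obstacle.} The step I expect to be most delicate is the bookkeeping when the optimal walk interacts with several critical s.c.c.'s at once, and the need to choose a common modulus $\gamma$ (a multiple of each $\gamma(\subcrit_\nu)$) that makes the residue-class adjustment work simultaneously with Proposition~\ref{p:representation}. Relatedly, one must verify that the ``take the maximum over $\nu$'' in the bound is legitimate: the walk-rearrangement for the component actually met by $W$ may require up to $\DM(g_\nu,n_\nu)$ steps, and the worst component governs the threshold. Ensuring that the reroutings stay inside the correct surviving subgraph $A^{(\nu)}$ (so that no deleted node is reused and the weight bound genuinely reflects the $n_\nu$-dimensional problem rather than the full $n$) is where the argument is most error-prone, and I would treat it as the technical crux.
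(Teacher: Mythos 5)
Your proposal is correct and takes essentially the same route as the paper's proof: a per-component Dulmage--Mendelsohn bound applied to the nested matrices $A^{(\nu)}$ (classifying each walk by the first critical component it meets, so that it lives in $\digr(A^{(\nu)})$), recombined through Proposition~\ref{p:representationSCC}. The only difference is packaging: the paper invokes the analogue of Theorem~\ref{t:bounds} for each $A^{(\nu)}$ as a black box, obtaining the telescoped weak CSR expansions $(A^{(\nu)})^t=C_{\crit_\nu}(S_{\crit_\nu})^tR_{\crit_\nu}\oplus(A^{(\nu+1)})^t$ for $t\geq\DM(g_\nu,n_\nu)$ and summing them over $\nu$, whereas you re-derive the same per-component step at the walk level using the cycle-removal machinery from~\cite{MNS}.
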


\begin{proof}
For each $\nu=1,\ldots,l$, the weak CSR expansion applied to~$A^{(\nu)}$ reads
\begin{equation}
\label{e:CSR-nu}
(A^{(\nu)})^t=C_{\crit_{\nu}}(S_{\crit_{\nu}})^t R_{\crit_{\nu}}\oplus (A^{(\nu+1)})^t,\quad t\geq T.
\end{equation}
The smallest $T$ for which the weak CSR expansion~\eqref{e:CSR-nu} holds is bounded by 
$\DM(g_{\nu},n_{\nu})$ by Theorem~\ref{t:bounds}.
If $t\geq \max_{\nu=1,\ldots,l} \DM(g_{\nu},n_{\nu})$ then~\eqref{e:CSR-nu} holds for all $\nu=1,\ldots l$ 
and by successive replacement, we have:
\begin{equation}
\label{e:fullCSR}
A^t=\bigoplus_{\nu=1}^l C_{\crit_{\nu}}(S_{\crit_{\nu}})^tR_{\crit_{\nu}}\oplus (A^{(l+1)})^t,
\end{equation}
Observing that $A^{(l+1)}=B$ and that the CSR terms sum up to $CS^tR$ by 
Proposition~\ref{p:representationSCC} we see that~\eqref{e:fullCSR} is exactly the 
weak CSR expansion $A^t=CS^tR\oplus B^t$.
\end{proof}

Bounds for $T_1(A)$ in~\cite{MNS} are based on the concept of the cycle removal
threshold defined as follows.

\begin{definition}
\label{def:Tcr}
Let~$\subcrit$ be a subgraph of~$\digr(A)$ and~$\gamma\in\Nat$.\\
The {\em cycle removal threshold}~$T_{cr}^\gamma(\subcrit)$ of~$\subcrit$ is
the smallest nonnegative integer~$T$ for which the following
holds: for all walks~$W\in\walksnode{i}{j}{\subcrit}$ with
length~$\geq T$, there is a walk
$V\in\walksnode{i}{j}{\subcrit}$ obtained from~$W$ by removing
cycles and possible inserting cycles
of~$\subcrit$ such that $l(V)\equiv l(W) \pmod{\gamma}$ and
$l(V)\le T$.
\end{definition}

The idea behind this definition is to be able to shorten
an optimal walk while keeping it optimal and keeping its length modulo~$\gamma$, thus proving inequalities between $CS^tR$ and~$A^t$.

The following proposition is stated in~\cite{MNS} and
proved there by ``arithmetical method".

\begin{proposition}[\cite{MNS}, Proposition 9.5]
\label{p:TcRLin}
For $A\in\Rpnn$ and $\subcrit$ a subgraph of~$\digr(A)$ with~$n'$ nodes, we have:
\begin{equation}
\label{e:TcRLin}
\forall \gamma\in\Nat, T_{cr}^\gamma(\subcrit)\le \gamma n +n-n'-1.
\end{equation}
\end{proposition}

The next proposition can be proved using a slight generalization
of~\cite{MNS}, Proposition 6.5\;(i),
which differs in the fact that~$\subcrit$ is considered as a whole and not each
s.c.c.\ at a time.
The proof of~\cite{MNS} actually shows this stronger statement.

\begin{proposition}
\label{p:TcrToT1}
Let $A$ be a square matrix such that all s.c.c.'s of $\crit(A)$,
have the same girth $g$. Let $\subcrit$ be the subgraph of $\crit(A)$ consisting
of all cycles of length $g$.
Then $$T_1(A)\le T_{cr}^{g}(\subcrit)-g+1.$$
\end{proposition}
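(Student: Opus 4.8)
The plan is to reduce the weak CSR expansion to a single entrywise inequality and then to realise optimal critical walks by walks of length exactly~$t$. I assume throughout that $\lambda(A)=\1$; the general case follows by homogeneity, since scaling~$A$ by a constant alters neither $T_1(A)$ nor $T_{cr}^g(\subcrit)$. Under this normalisation every closed walk has nonpositive weight, so deleting a closed subwalk can only increase the weight. Using the walk interpretations $(A^t)_{ij}=p(\walkslen{i}{j}{t})$ and $(CS^tR)_{ij}=p(\walkslennode{i}{j}{t,g}{\crit(A)})$ (Proposition~\ref{p:CSRprops}(\ref{CSR-walks}) with $\gamma=g$, which is legitimate because each s.c.c.\ of $\crit(A)$ contains a $g$-cycle and hence has cyclicity dividing~$g$, so that $\gamma(\crit(A))\mid g$), together with the fact that $(\bnacht^t)_{ij}$ is the maximal weight of a length-$t$ walk from~$i$ to~$j$ avoiding critical nodes, I split each length-$t$ walk according to whether it meets $\crit(A)$. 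Those meeting it have weight at most $(CS^tR)_{ij}$ and those avoiding it weight at most $(\bnacht^t)_{ij}$, so $A^t\le CS^tR\oplus\bnacht^t$ always, while $A^t\ge\bnacht^t$ always. Hence the weak CSR expansion holds at time~$t$ precisely when $A^t\ge CS^tR$, and it suffices to prove $A^t\ge CS^tR$ for all $t\ge T_{cr}^g(\subcrit)-g+1$.

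Next I would replace the full critical graph by the subgraph~$\subcrit$ of all cycles of length~$g$. As each s.c.c.\ of $\crit(A)$ has girth~$g$, the graph~$\subcrit$ is completely reducible and meets every s.c.c.\ of $\crit(A)$, and I claim that $(CS^tR)_{ij}=p(\walkslennode{i}{j}{t,g}{\subcrit})$. Indeed, any walk through a critical node can be rerouted through a node of~$\subcrit$ by inserting, at that critical node, a weight-$\0$ closed walk that visits a $g$-cycle and whose length is a multiple of~$g$; such a closed walk exists because the ambient s.c.c.\ is strongly connected, all its cycles have weight~$\0$, and its cyclicity divides~$g$. This insertion changes neither the weight nor the length modulo~$g$. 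This is precisely the statement obtained from the generalisation of \cite{MNS}, Proposition~6.5(i), in which $\subcrit$ is treated as a whole rather than one s.c.c.\ at a time.

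It then remains to adjust lengths. Fix $t\ge T_{cr}^g(\subcrit)-g+1$ and indices $i,j$ with $(CS^tR)_{ij}\ne\0$, and choose an optimal walk $W\in\walksnode{i}{j}{\subcrit}$ with $l(W)\equiv t\pmod g$ and $p(W)=(CS^tR)_{ij}$. Write $T_\ast:=T_{cr}^g(\subcrit)$, so $T_\ast\le t+g-1$. If $l(W)\ge T_\ast$, the definition of the cycle removal threshold produces $V\in\walksnode{i}{j}{\subcrit}$ with $l(V)\equiv t\pmod g$ and $l(V)\le T_\ast$, obtained from~$W$ by deleting closed subwalks (which increases weight, as $\lambda(A)=\1$) and inserting weight-$\0$ cycles of~$\subcrit$; thus $p(V)\ge p(W)$. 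If $l(W)<T_\ast$, set $V:=W$. In both cases $l(V)\le T_\ast\le t+g-1$ together with $l(V)\equiv t\pmod g$ forces $l(V)\le t$. Since~$V$ meets~$\subcrit$, it runs through a node lying on a $g$-cycle of weight~$\0$; inserting $(t-l(V))/g$ copies of that cycle yields a walk of length exactly~$t$ from~$i$ to~$j$ of weight $p(V)\ge (CS^tR)_{ij}$. Therefore $(A^t)_{ij}\ge(CS^tR)_{ij}$, completing the proof.

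The delicate step is the representation in the second paragraph: the rerouting must preserve the length modulo~$g$, not merely modulo the cyclicity of the relevant s.c.c., and it is exactly this that lets the cycle removal threshold be taken with respect to the small subgraph~$\subcrit$. This is what ties the bound to~$\subcrit$ and, combined with Proposition~\ref{p:TcRLin}, recovers $\DM(g,n)$ when $\subcrit$ is a single $g$-cycle.
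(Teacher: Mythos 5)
Your proof is correct and takes essentially the same route as the paper, which delegates to (the proof of) \cite{MNS}, Proposition~6.5(i) with $\subcrit$ treated as a whole: you reconstruct exactly that argument --- reduce the weak expansion to $A^t\ge CS^tR$, represent $(CS^tR)_{ij}$ as $p(\walkslennode{i}{j}{t,g}{\subcrit})$ (legitimate since every s.c.c.\ of $\crit(A)$ contains a $g$-cycle, so $\gamma(\crit(A))$ divides $g$), apply the cycle removal threshold to get $l(V)\le T_{cr}^g(\subcrit)\le t+g-1$, which with $l(V)\equiv t\pmod g$ forces $l(V)\le t$, and pad with critical $g$-cycles. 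One purely notational slip: the inserted closed walks and cycles have weight $\1=0$ in the paper's conventions, not $\0=-\infty$ as you repeatedly write.
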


%
%
%

We are finally able to prove Proposition~\ref{p:critgraph}.
\begin{proof}[{\bf Proof of Proposition~\ref{p:critgraph}}]
Let us assume that $T_1(A)=\DM(g,n)$. We want to prove that $\crit(A)$ is strongly connected and has only one cycle with length~$g$,
up to choice of its first node.

To apply Proposition~\ref{p:TcrToT1} we need to show that all s.c.c's of $\crit(A)$ have girth~$g$.

Otherwise, w.r.t. the notation of Proposition~\ref{p:T1ABN} let $\crit_1$ be a component with girth~$g_1<g$.
We have $\DM(g_1,n_1)<\DM(g,n)$ (recall that $\DM(g,n)=g(n-2)+n$). 
For all other components of $\crit(A)$ we have 
$\DM(g_{\nu},n_{\nu})<\DM(g,n)$ since $n_{\nu}<n$. Therefore using the bound of 
Proposition~\ref{p:T1ABN} we would have $T_1(A)<\DM(g,n)$.

Now, we will combine Propositions~\ref{p:TcrToT1} and~\ref{p:TcRLin}
to show both the connectivity of the $\crit(A)$ and the uniqueness of the shortest critical cycle.
Let us denote by~$n'$ the number of nodes of the graph~$\subcrit$ of Proposition~\ref{p:TcrToT1} and 
set~$\gamma=g$ 
in~\eqref{e:TcRLin} (see Proposition~\ref{p:TcRLin}).
Then we have 
$$T_1(A)\le T_{cr}^{g}(\subcrit)-g+1
\le (gn+n-n'-1) -g+1
= \DM(g,n)+g-n'.$$

If $\crit(A)$ is not strongly connected then~$\subcrit$ is not and $n'\ge 2g$. If $\crit(A)$  is
strongly connected but contains more than one cycle, then $n'>g$.
Indeed, one can not have two critical cycles of length~$g$ with the same set of nodes,
because it would build a shorter cycle so that~$g$ would not be the girth anymore.
Thus, in both cases, we would have $n'>g$ and $T_1(A)<\DM(g,n)$.

Finally, $\crit(A)$ is strongly connected and contains exactly one cycle of length~$g$, up to choice of its first node.
\end{proof}

\subsection{The Interesting Walk and Its Structure}
In this section we assume that $T_1(A)=\DM(g,n)$.
and deduce from this the
structure of special walks which we call interesting.

By Proposition~\ref{p:critgraph}, there is a unique critical cycle of length~$g$ up to choice of its first node. 
The subgraph consisting of all nodes and edges of this cycle will be denoted by $Z_0$.

\begin{definition}\label{def:interesting}
A walk $W\in \walkslennode{i}{j}{t}{Z_0}$ is called {\em twice optimal} if it has minimal length among 
all the walks with maximal weight in the set $\walkslennode{i}{j}{t,g}{Z_0}$.
It is called {\em interesting} if it is twice optimal and has length~$\DM(g,n) +g-1$.
\end{definition}

Interesting walks are twice optimal with maximal possible length among all entries and matrices.
Their particular structure, described in Proposition~\ref{p:W0} will define matrices~$A_1, A_2, B_1$.

\begin{proposition}
\label{p:twiceopt}
If $T_1(A)=\DM(g,n)$, then there exists $(i,j)$ such that $(A^{\DM(g,n)-1})_{i,j}<(CS^{\DM(g,n)-1}R)_{i,j}$. 
For any such~$(i,j)$ there is an interesting walk from~$i$ to~$j$. 
\end{proposition}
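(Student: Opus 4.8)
=== besch PROOF PROPOSAL ===

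The plan is to prove the two assertions of Proposition~\ref{p:twiceopt} in sequence, since the second one (existence of an interesting walk) will rest on the first (existence of a witnessing pair $(i,j)$ where the full power strictly dominates the CSR term at time $\DM(g,n)-1$).

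\textbf{Existence of the pair $(i,j)$.}
First I would argue by contradiction. Suppose that for every pair $(i,j)$ we have $(A^{\DM(g,n)-1})_{ij}\ge (CS^{\DM(g,n)-1}R)_{ij}$. Combined with the weak CSR expansion $A^t = CS^tR\oplus\bnacht^t$, which always gives $A^t\ge CS^tR$ entrywise, this would force $A^{\DM(g,n)-1}=CS^{\DM(g,n)-1}R\oplus\bnacht^{\DM(g,n)-1}$ to already coincide with $CS^{\DM(g,n)-1}R$ at exponent $\DM(g,n)-1$; more precisely it would say $\bnacht^{\DM(g,n)-1}\le CS^{\DM(g,n)-1}R$ entrywise. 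The key is to turn this into the statement that the weak CSR expansion already holds at time $\DM(g,n)-1$, contradicting $T_1(A)=\DM(g,n)$ (which is by definition the \emph{least} threshold). To do this I would use the group law and pseudo-periodicity of CSR terms (Proposition~\ref{p:CSRprops}, parts~\ref{CSR-group} and~\ref{CSR-pseudoGpe}) together with the fact that $A^{\DM(g,n)-1}=CS^{\DM(g,n)-1}R$ entrywise would propagate upward: multiplying by $A$ and using $A\,CS^tR=CS^{t+1}R$ shows $A^t=CS^tR\oplus\bnacht^t$ holds for all $t\ge \DM(g,n)-1$, i.e.\ $T_1(A)\le\DM(g,n)-1$, the desired contradiction. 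The subtle point I must handle carefully is that $A^{\DM(g,n)-1}\ge CS^{\DM(g,n)-1}R$ plus the negated hypothesis gives equality of these two matrices at that single exponent, and I need to confirm equality persists for all larger $t$; the cleanest route is the identity $A\cdot CS^tR=CS^{t+1}R$ applied inductively.

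\textbf{Existence of an interesting walk.}
Now fix such a pair $(i,j)$, so $(A^{\DM(g,n)-1})_{ij}<(CS^{\DM(g,n)-1}R)_{ij}$. By the optimal-walk interpretations, $(A^{\DM(g,n)-1})_{ij}=p(\walkslen{i}{j}{\DM(g,n)-1})$ and $(CS^{\DM(g,n)-1}R)_{ij}=p(\walkslennode{i}{j}{\DM(g,n)-1,g}{Z_0})$ (Proposition~\ref{p:CSRprops}, part~\ref{CSR-walks}, with $\gamma=g$). The strict inequality means the best $Z_0$-visiting walk of length $\equiv\DM(g,n)-1\pmod g$ strictly beats every walk of exact length $\DM(g,n)-1$. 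In particular, any weight-optimal $Z_0$-visiting walk $W^*$ realizing the right side must have length strictly greater than $\DM(g,n)-1$, because a walk of exact length $\DM(g,n)-1$ through $Z_0$ would be counted on the left and contradict the strict inequality. My plan is to take such a weight-maximal $W^*$ of minimal length among maximizers (so $W^*$ is twice optimal by definition) and show its length is exactly $\DM(g,n)+g-1$. The upper bound $l(W^*)\le\DM(g,n)+g-1$ should come from the cycle-removal machinery: Proposition~\ref{p:TcRLin} and Proposition~\ref{p:TcrToT1} bound how far one can reduce the length of a $Z_0$-visiting walk while preserving its length class mod $g$ and its endpoints; concretely, $T_{cr}^{g}(\subcrit)\le gn+n-n_1-1$ with $n_1=g$ gives $\DM(g,n)+g-1$. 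The lower bound $l(W^*)\ge\DM(g,n)+g$ is what I must rule out: I would argue that minimal length among maximizers combined with the strict domination forces the length into the narrow window, landing on $\DM(g,n)+g-1$ exactly.

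\textbf{Main obstacle.}
The hardest part will be pinning down the exact length $\DM(g,n)+g-1$ rather than merely an inequality. The cycle-removal threshold bounds reduce an over-long walk down to length at most $\DM(g,n)+g-1$ in the same mod-$g$ class with the same or greater weight, but I need this reduction to respect both weight-optimality \emph{and} the $Z_0$-visiting constraint simultaneously, and I must argue the reduced walk cannot be pushed all the way down to $\DM(g,n)-1$ (else it would appear on the left side and kill the strict inequality). The interplay is delicate: the strict inequality forces length $>\DM(g,n)-1$, the twice-optimality forces minimality, and the cycle-removal bound forces length $\le\DM(g,n)+g-1$; I expect that the only length consistent with all three constraints, given that lengths of $Z_0$-visiting maximizers change by multiples tied to $g$, is exactly $\DM(g,n)+g-1$. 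Making this ``only consistent value'' argument rigorous — in particular showing no maximizer of length between $\DM(g,n)$ and $\DM(g,n)+g-2$ exists when a minimal maximizer is chosen — is where the real work lies, and I would lean on the fact (noted just before the interesting-walk section) that removing a closed subwalk never decreases weight when $\lambda(A)=\1$.
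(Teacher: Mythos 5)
Your plan is essentially the paper's own proof, and it would go through; but two points need repair before it is a proof. First, in your opening step the assertion that the weak CSR expansion ``always gives $A^t\ge CS^tR$ entrywise'' is backwards: the expansion is not known to hold at $t=\DM(g,n)-1$ (that is exactly what is at issue), and $A^t\ge CS^tR$ is precisely your negated hypothesis, not a standing fact. What \emph{is} always true is the pair of inequalities $A^t\le CS^tR\oplus \bnacht^t$ (split each walk of length~$t$ according to whether it meets the critical graph, and use Proposition~\ref{p:CSRprops}, part~\ref{CSR-walks}) and $A^t\ge \bnacht^t$ (since $\bnacht\le A$); together with the negated hypothesis these give $A^{t}=CS^{t}R\oplus\bnacht^{t}$ at $t=\DM(g,n)-1$. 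Your inductive propagation via $A\,CS^tR=CS^{t+1}R$ is valid but unnecessary: since $T_1(A)=\DM(g,n)$, the identity already holds for all $t\ge \DM(g,n)$, so equality at the single exponent $\DM(g,n)-1$ yields $T_1(A)\le \DM(g,n)-1$ at once, the desired contradiction. This is how the paper argues.

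Second, the ``main obstacle'' you flag at the end dissolves, and this is where your write-up stops short of a proof. Every walk in $\walkslennode{i}{j}{\DM(g,n)-1,g}{Z_0}$ has length congruent to $\DM(g,n)-1$ modulo~$g$, so there simply are no candidate lengths strictly between $\DM(g,n)-1$ and $\DM(g,n)+g-1$: the window you worry about is empty by congruence. Concretely: Proposition~\ref{p:TcRLin} with $\subcrit=Z_0$, $n_1=g$ and $\gamma=g$ gives $T_{cr}^{g}(Z_0)\le gn+n-g-1=\DM(g,n)+g-1$, and by Definition~\ref{def:Tcr} the reduction preserves the visit to~$Z_0$ and the length class modulo~$g$, while removing cycles cannot decrease weight because $\lambda(A)=\1$; hence a minimal-length weight maximizer has length at most $\DM(g,n)+g-1$. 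If that length were smaller, by congruence it would be at most $\DM(g,n)-1$, and inserting copies of~$Z_0$ (each of length~$g$ and weight~$\1$, since $Z_0$ is critical and $\lambda(A)=\1$) pads it to length exactly $\DM(g,n)-1$ without changing its weight $(CS^{\DM(g,n)-1}R)_{ij}$, which would force $(A^{\DM(g,n)-1})_{ij}\ge (CS^{\DM(g,n)-1}R)_{ij}$ and contradict the choice of $(i,j)$. So the length is exactly $\DM(g,n)+g-1$ and the walk is interesting — precisely the paper's conclusion; the interplay between weight-optimality and the $Z_0$-visiting constraint that you feared is already built into the definition of the cycle removal threshold.
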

\begin{proof}
We set $t=\DM(g,n)-1$. 

If there is no~$(i,j)$ such that~$(A^{\DM(g,n)-1})_{i,j}<(CS^{\DM(g,n)-1}R)_{i,j}$, then for all~$i,j$ we have $(A^t)_{i,j}\ge (CS^tR)_{i,j}$.

By definition of~$B$, we have $(A^t)_{i,j}\ge (B^t)_{i,j} $ for all~$t,i,j$.
We hence have $(A^t)_{i,j} \ge (B^t)_{i,j}\oplus (CS^tR)_{i,j}$.
We always have the opposite inequality $(A^t)_{i,j} \le (B^t)_{i,j}\oplus (CS^tR)_{i,j}$
by distinguishing whether a walk visits the critical graph or not.
Hence $(A^t)_{i,j}=(B^t)_{i,j}\oplus (CS^tR)_{i,j}$.
But this means $T_1(A) \leq t <\DM(g,n)$, which contradicts $T_1(A) = \DM(g,n)$.

Let us prove the second part of the proposition.


Proposition~\ref{p:TcRLin}, applied with~$\subcrit=Z_0$, $\gamma=g$ 
and $n_1=g$, 
implies that twice optimal walks have length at most 
$t+g$ (alternatively, see the proof of \cite{MNS} Theorem 4.1).

Now, if there is no interesting walk from~$i$ to~$j$ it means that the set $\walkslennode{i}{j}{t,g}{Z_0}$
contains a walk with optimal weight and length strictly less than~$t+g$. 
However, this length 
is congruent to $t$ modulo $g$, hence it is less than or equal to $t$ and, furthermore,
can be made equal to~$t$ by inserting copies of~$Z_0$.
The weight of this walk is $(CS^{\DM(g,n)-1}R)_{i,j}$ by Proposition~\ref{p:representation},  so $(A^{\DM(g,n)-1})_{i,j}\geq 
(CS^{\DM(g,n)-1}R)_{i,j}$.
\end{proof}


\begin{proposition}
\label{p:n=g=2}
If $n=g=2$ and $A\in\Rp^{2\times 2}$,
then \mbox{$T_1(A)=\DM(2,2)=2$}
if and only if $a_{11}\neq a_{22}$. 
\end{proposition}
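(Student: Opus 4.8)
The plan is to reduce $A$ to a normal form by scaling, observe that the Nachtigall matrix is trivial in this situation, and then use the a priori bound $T_1(A)\le 2$ to localize the entire question at the single exponent $t=1$, where a direct comparison settles the dichotomy.

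First I would normalize. Both $T_1$ and the condition $a_{11}\neq a_{22}$ are invariant under multiplying $A$ by a scalar and under a diagonal similarity $A\mapsto D^{-1}AD$ (which fixes the diagonal entries), so I may assume $\lambda(A)=\1$ and, by the strict visualization theorem of~\cite{SS-91} (applicable since $\lambda(A)=\1\neq\0$), that $A$ is visualized. As $g=\g(\crit(A))=2$, the critical graph contains the $2$-cycle $1\to 2\to 1$ (the only cycle of that length) and no loop; hence in the visualization $a_{12}=a_{21}=\1$ while the noncritical loops satisfy $a_{11}=\alpha<\1$ and $a_{22}=\beta<\1$. Thus $A=\diag(\alpha,\beta)$ with off-diagonal entries $\1$, and the hypothesis $a_{11}\neq a_{22}$ becomes $\alpha\neq\beta$.

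Next I would collapse the expansion and reduce the range of $t$. Both nodes are critical, so by Definition~\ref{def:BN} every entry of $\bnacht$ equals $\0$; hence $\bnacht^t=\0$ for $t\ge 1$ and the weak CSR expansion reduces to $A^t=CS^tR$. By Theorem~\ref{t:bounds} we have $T_1(A)\le\min(\wiel(2),\DM(2,2))=2$, so $A^t=CS^tR$ holds automatically for every $t\ge 2$. Consequently $T_1(A)\le 1$ exactly when this equality also holds at $t=1$, and therefore $T_1(A)=2$ if and only if $A=A^1\neq CS^1R$. It then remains to compute $CS^1R$ using the optimal walk interpretation (Proposition~\ref{p:CSRprops}) with $\gamma=2$: since both nodes are critical, $(CS^1R)_{ij}$ is the maximal weight of an odd-length walk from $i$ to $j$. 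The critical arcs carry weight $\1$, so a maximal odd-length $i\to j$ walk consists of critical arcs plus the single heavier self-loop, giving $(CS^1R)_{11}=(CS^1R)_{22}=\max(\alpha,\beta)$ and $(CS^1R)_{12}=(CS^1R)_{21}=\1$. Comparing with $A=\diag(\alpha,\beta)$ (off-diagonal $\1$), the matrices coincide precisely when $\alpha=\beta=\max(\alpha,\beta)$, i.e.\ when $\alpha=\beta$; hence $A^1\neq CS^1R$ iff $\alpha\neq\beta$, which yields $T_1(A)=2$ iff $a_{11}\neq a_{22}$.

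I expect the only step requiring care to be the walk evaluation of the diagonal entries of $CS^1R$: one must verify that no odd closed walk at a node beats $\max(\alpha,\beta)$. This follows because all arc weights are $\le\1$, achieving odd length forces at least one self-loop (each contributing at most $\max(\alpha,\beta)<\1$), and the single heavier self-loop inserted into the critical $2$-cycle realizes this value. Everything else is routine bookkeeping once the reduction to the exponent $t=1$ via Theorem~\ref{t:bounds} is in place.
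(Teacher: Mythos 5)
Your proof is correct and takes essentially the same route as the paper: both arguments use the a priori bound $T_1(A)\le \DM(2,2)=2$ together with $\bnacht=\0$ (both nodes being critical) to reduce the question to whether $A=CS^1R$, and both find that the off-diagonal entries always agree while the diagonal of the periodic term equals $\max\{a_{11},a_{22}\}$, so equality holds iff $a_{11}=a_{22}$. The only cosmetic difference is that you compute $CS^1R$ via the optimal-walk interpretation after a visualization scaling, whereas the paper obtains it as $A^3$ by direct matrix multiplication.
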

\begin{proof}
 Observe that $n=g=2$ implies that $\crit(A)$ consists of the nodes and arcs 
of the unique cycle of length $2$ up to choice of its first node, and that both nodes of $\digr(A)$ are critical.
Thus $B_N=\0$, $T_1(A)=T(A)$ and $T_1(A)<2 \Leftrightarrow A^3=A$.

Let us notice that 
$(a_{11})^{\otimes 2} <a_{12}\otimes a_{21}=\lambda(A)=\1$ and $a_{22}^{\otimes 2}<a_{12}\otimes a_{21}=\lambda(A)=\1$,
and compute~$A^3$.

Consider first the off-diagonal entries. In this case we have
\begin{equation*}
(A^3)_{k,l}= \max\{a_{k,l}a_{l,k}a_{k,l}, (a_{k,k})^2 a_{k,l}, a_{k,l}(a_{l,l})^2\}= a_{k,l}
\end{equation*}
for any such $k,l\in\{1,2\}$.

Consider now
\begin{equation*}
\begin{split}
(A^3)_{k,k}&=\max \{(a_{k,k})^3, a_{k,l}a_{l,l}a_{l,k}, a_{k,l}a_{k,k}a_{l,k}\}\\
 &= \max\{a_{k,k},a_{l,l}\},
\end{split}
\end{equation*}
for $k,l\in\{1,2\}$ and 
$l\neq k$.
This is not equal to $a_{k,k}$ if and only if $a_{k,k}<a_{l,l}$. 
Finally, $A^3=A\Leftrightarrow a_{k,k}=\max \{a_{k,k},a_{l,l}\}\Leftrightarrow a_{1,1}=a{2,2}$ and the proof is complete.
\end{proof}

The following proposition, to be proved in Subsection~\ref{ss:ProofpW0},
shows uniqueness and characterizes the interesting walk in the remaining cases of~$g$ and~$n$.

\begin{proposition}
\label{p:W0}
Let $A\in\Rpnn$ be such that $g=\g(\crit(A))\ge 2$,
$T_1(A)=\DM(g,n)$, and
not $n=g=2$. For any interesting walk~$W_0$, there is a renumbering of the nodes such that $1,\ldots,g$ are the nodes of the (only) 
critical cycle of length $g$ and
\begin{equation}
\label{e:W0}
W_0=(g+1)\ldots n (1 \dots n)^g,
\end{equation}
where $(1\dots n)^g=\overbrace{(1\dots n)(1\dots n)\dots (1\dots n)}^{g\text{ times}}$.\\
\end{proposition}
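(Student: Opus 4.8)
The plan is to pin down the cycle structure of a given interesting walk $W_0$ and then read off the renumbering. Recall that $W_0$ is twice optimal from some $i$ to some $j$ and has length $\DM(g,n)+g-1=(g+1)(n-1)$, which is exactly the upper bound on the cycle removal threshold $T_{cr}^{g}(Z_0)$ supplied by Proposition~\ref{p:TcRLin} (take $\subcrit=Z_0$, so $n_1=g$, and $\gamma=g$). Fix one node $z$ of the unique critical cycle $Z_0$ (Proposition~\ref{p:critgraph}); ``going through $Z_0$'' then means visiting $z$, and since $Z_0$ is a closed walk of length $g$ and weight $\0$ through $z$, we may insert or delete copies of $Z_0$ at $z$ without changing either the weight or the length modulo $g$. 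I would decompose $W_0$ into a simple $i$--$j$ path $P$ (so $l(P)\le n-1$) together with simple cycles $C_1,\dots,C_m$ of lengths $\ell_k\le n$, recording $l(P)+\sum_k\ell_k=(g+1)(n-1)$.

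First I would extract \emph{irreducibility} from twice optimality. Because $\lambda(A)=\1$, deleting a closed subwalk never decreases the weight; as $W_0$ has maximal weight in $\walkslennode{i}{j}{t,g}{Z_0}$ every deletable closed subwalk has weight exactly $\0$, and as $W_0$ has minimal length among maximal-weight walks, no closed subwalk of length $\equiv0\pmod g$ whose deletion preserves a visit to $z$ may be removed. In terms of the decomposition this says: the multiset of residues $\ell_k\bmod g$ has no nonempty zero-sum sub-multiset whose removal leaves a $z$-visiting cycle. A short separate argument shows that $P$ cannot itself pass through $z$ (otherwise \emph{every} proper zero-sum subset of cycles would be removable, making the residue sequence genuinely zero-sum-free and hence of length $\le g-1$ by the Davenport constant of $\Z/g\Z$, contradicting the length bound below), and that no $\ell_k\equiv0\pmod g$ (such a cycle could be replaced by a single copy of $Z_0$ at $z$ and then shortened).

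The arithmetic core comes next. From $l(P)\le n-1$ we get $\sum_k\ell_k\ge g(n-1)$, and since each $\ell_k\le n$ this forces $m\ge g$. Combined with the zero-sum bookkeeping above, the residue sequence $(\ell_k\bmod g)$ is a \emph{minimal} zero-sum sequence (the whole sequence sums to $\0$ by the Davenport guarantee at length $\ge g$, but no proper subset does); since the Davenport constant of $\Z/g\Z$ equals $g$ and its unique minimal zero-sum sequence of maximal length is $g$ copies of a generator, I conclude $m=g$, that all $\ell_k$ share one residue $u$ with $\gcd(u,g)=1$, and that $u=n\bmod g$, whence $\gcd(g,n)=1$. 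Writing $\ell_k=n-gb_k$ with $b_k\ge0$, the inequality $\sum_k\ell_k\ge gn-g$ yields $\sum_k b_k\le1$.

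It remains to kill the case $\sum_k b_k=1$ (one cycle $C$ of length $n-g$) and to identify the length-$n$ cycles. Replacing $C$ by a heaviest length-$n$ cycle $H^*$ produces a walk of length $t+2g$ in the same class and through $z$, whose weight cannot exceed the maximum $(CS^tR)_{ij}$ attained by $W_0$ (Proposition~\ref{p:representation}); this forces $w(H^*)\le w(C)$, hence $w(C)$ is at least the weight of any length-$n$ cycle occurring in $W_0$. Swapping a length-$n$ cycle of $W_0$ for a copy of $C$ then gives a walk of length $t$, same class, weight $\ge w(W_0)$, contradicting that the minimal length of a maximal-weight walk equals $t+g$. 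Hence every $\ell_k=n$, each $C_k$ is a Hamiltonian cycle, and $l(P)=n-g-1$; weight-maximality forces them all to be one heaviest Hamiltonian cycle $H$, and the $z$-visit forces $H$ to run through all nodes of $Z_0$. Renumbering so that $Z_0=1\to\cdots\to g\to1$, $H=1\to\cdots\to n\to1$, and $P=g+1\to\cdots\to n$, the walk becomes $W_0=(g+1)\cdots n\,(1\cdots n)^{g}$. The main obstacle is this last paragraph: carefully maintaining validity of the swap (the attachment nodes and the surviving $z$-visit, with the small cases in $g$ requiring attention) and showing the $g$ length-$n$ cycles coincide with a single Hamiltonian cycle aligned with $Z_0$; the counting of the third paragraph is routine once the zero-sum reformulation is in place, so the difficulty is concentrated in the weight/geometry rigidity rather than in the arithmetic.
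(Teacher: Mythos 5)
There is a genuine gap, and it sits at the combinatorial heart of your argument. Your zero-sum bookkeeping equates ``twice optimality'' with ``the residue sequence $(\ell_k \bmod g)$ has no proper nonempty zero-sum sub-multiset.'' But a family of cycles in the arc-multiset decomposition can only be deleted if the \emph{remaining} multiset stays connected (so that it can be reassembled into an actual walk by an Eulerian-path argument): a zero-sum family whose removal strands other cycles (e.g.\ a cycle $C_2$ attached to $P$ only through a cycle $C_1$ that you delete) is simply not removable, so twice optimality does \emph{not} force zero-sum-freeness of proper subsets. With that, the conclusions $m=g$, common residue $u$, and minimality of the zero-sum sequence all collapse; the same hole affects your claim that no $\ell_k\equiv_g 0$ (inserting $Z_0$ at $z$ does not restore connectivity for cycles attached only through the deleted one) and your parenthetical about $P$ passing through $z$. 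This is not a technicality one can wave away: the paper's own proof of Proposition~\ref{p:wielwalk} (the Wielandt case $g=n$), which \emph{does} use your path-plus-cycles decomposition, needs precisely an explicit connected/disconnected case analysis (its cases 1, 2, 2a, 2b), and that is in the much more rigid situation $g=n$. For general $g$ the paper takes a different route that avoids the issue entirely: it only ever removes \emph{consecutive} closed subwalks anchored at a repeated node (Lemma~\ref{l:HA}), which is always legitimate, and from this derives exact occurrence counts (Lemma~\ref{l:leqg}: each node of $Z_0$ occurs exactly $g$ times, each other node $g+1$ times), the interlacing Lemma~\ref{l:interlace}, and the block decomposition $W_0=PQP_1V$ of Lemma~\ref{l:purpleyellow}.

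Two further gaps. First, $u=n\bmod g$ is asserted, not proved: from $\sum_k\ell_k\ge g(n-1)$ and $\ell_k\le n$ one only gets $\ell_k\in[n-g,n]$, and the arithmetic equally admits the configuration with all $\ell_k=n-1$ and $l(P)=n-1$ (possible whenever $\gcd(n-1,g)=1$; note that coprimality of $n$ and $g$ is not available at this stage --- in the paper it is Corollary~\ref{c:coprime}, a \emph{consequence} of the proposition). Your parametrization $\ell_k=n-gb_k$ silently excludes this case; it happens to be refutable (a Hamiltonian $P$ visits $z$, and deleting \emph{all} cycles, of total length $\equiv_g 0$, leaves the connected walk $P$, contradicting twice optimality), but the refutation must be made. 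Second, your final paragraph --- which you yourself flag as the main obstacle --- replaces the paper's real work by assertion: weight-maximality only shows each length-$n$ cycle is \emph{a} heaviest Hamiltonian cycle, not that they all coincide (uniqueness of the heaviest Hamiltonian cycle is Corollary~\ref{c:hamiltonian}, deduced \emph{from} the form~\eqref{e:W0}, so you may not assume it); nothing in your sketch shows the arcs of $P$ lie on $H$; and nothing shows the nodes of $Z_0$ occupy $g$ consecutive positions along $H$, which~\eqref{e:W0} requires (also, one cannot in general renumber so that simultaneously $Z_0=1\cdots g1$ arc-wise and $H=1\cdots n1$; the proposition claims only the node set of $Z_0$). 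In the paper these three rigidity statements are exactly the content of the interlacing lemma, the exchange argument showing all periods equal $QP$, and the transpose trick yielding $V=P$.
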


\begin{corollary}
\label{c:hamiltonian}
Under the conditions of Proposition~\ref{p:W0}, $\digr(A)$
has an Hamiltonian cycle with maximal weight, unique  up to choice of its first node,
which is labeled $1\dots n1$ by the renumbering stated in Proposition~\ref{p:W0}.
\end{corollary}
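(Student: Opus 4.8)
The plan is to extract both claims — maximality and uniqueness — directly from the normal form of Proposition~\ref{p:W0}, exploiting the optimality built into the notion of an interesting walk. I work in the renumbering of Proposition~\ref{p:W0}, write $H$ for the Hamiltonian cycle $1\to\cdots\to n\to 1$ (whose arcs exist, since $W_0$ traverses them), set $t=\DM(g,n)-1$, and fix the interesting walk $W_0=(g+1)\cdots n\,(1\cdots n)^g$ from $g+1$ to $n$, of length $t+g$ and weight $p([g+1,\dots,n])\otimes p(H)^{\otimes g}$. For maximality, let $Z'$ be any Hamiltonian cycle; it meets node $n$, so I write it as a closed walk $n\to\cdots\to n$ and form $W''=(g+1)\cdots n\,(Z')^{g}$. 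Then $W''$ runs from $g+1$ to $n$, has length $t+g\equiv t\pmod g$ and visits $Z_0$ (indeed all nodes), so $W''\in\walkslennode{g+1}{n}{t,g}{Z_0}$; since $W_0$ is of maximal weight there, cancelling the common initial factor in $p(W'')\le p(W_0)$ yields $p(Z')\le p(H)$. Thus $H$ is a Hamiltonian cycle of maximal weight.

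For uniqueness, the key observation — itself read off from Proposition~\ref{p:W0} — is that every interesting walk uses exactly $n$ distinct arcs, forming a single Hamiltonian cycle, and that this arc count is invariant under renumbering. Suppose $Z'$ is a second Hamiltonian cycle with $p(Z')=p(H)$ but with arc set different from $H$. Replacing only the last lap of $H$ in $W_0$ by a lap of $Z'$ produces $W'=(g+1)\cdots n\,H^{g-1}Z'\in\walkslennode{g+1}{n}{t,g}{Z_0}$, of length $t+g$ and, since $p(Z')=p(H)$ is maximal, of the same maximal weight as $W_0$. Because $W_0$ is twice optimal, the minimal length of a maximal-weight walk in this set is exactly $t+g$; as $W'$ realizes both this weight and this length, $W'$ is twice optimal of length $t+g$, hence interesting.

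Applying the key observation to $W'$ forces it to use exactly $n$ distinct arcs. But $g\ge 2$ leaves at least one full lap of $H$ in $H^{g-1}$, so $W'$ traverses every arc of $H$ and every arc of $Z'$, while the initial segment $(g+1)\cdots n$ contributes only arcs of $H$; its set of distinct arcs is therefore $H\cup Z'$, which has at least $n+1$ elements because $Z'\neq H$. This contradiction shows no second maximal-weight Hamiltonian cycle exists. Since $H=1\cdots n$ in the chosen numbering, the unique maximal-weight Hamiltonian cycle is labelled $1\cdots n$, as claimed.

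The main obstacle is precisely this uniqueness step: maximality is an immediate consequence of the optimal-weight property of $W_0$, whereas ruling out a second maximal Hamiltonian cycle requires manufacturing a new interesting walk $W'$ and invoking Proposition~\ref{p:W0} a second time. The leverage comes from twice optimality pinning the minimal maximal-weight length at $t+g$ (so that $W'$ is forced to be interesting) together with the rigidity — exactly $n$ arcs forming one cycle — that Proposition~\ref{p:W0} imposes on every interesting walk. The remaining points are routine: that $W'$ and $W''$ indeed lie in $\walkslennode{g+1}{n}{t,g}{Z_0}$, that weights concatenate additively under $\otimes$, and that for directed Hamiltonian cycles "distinct up to choice of first node" coincides with "distinct arc sets".
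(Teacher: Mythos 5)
Your proof is correct and takes essentially the same route as the paper: you replace one lap of $1\cdots n$ in the interesting walk by the putative second maximal-weight Hamiltonian cycle, observe that the resulting walk is again interesting, and contradict the normal form of Proposition~\ref{p:W0} using $g\ge 2$. Your explicit cancellation argument for maximality and the arc-count observation ($n$ arcs for the form~\eqref{e:W0} versus at least $n+1$ for the modified walk) merely spell out what the paper's proof leaves implicit in the phrase ``the resulting walk can not be written as~\eqref{e:W0}''.
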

\begin{proof}
By contradiction, suppose that there is a different Hamiltonian cycle with the largest weight.
It can replace one of the copies of $1\ldots n1$ in $W_0$, and the walk should still be interesting. 
However as $g\geq 2$, the resulting walk contains edges of at least two different Hamiltonian cycles,
so it cannot be represented as~\eqref{e:W0}, 
which is in contradiction with this walk being interesting.
\end{proof}


\begin{remark}
\label{r:W0}
By Proposition~\ref{p:W0} and Corollary~\ref{c:hamiltonian}, 
the Hamiltonian cycle with maximal weight induces a renumbering of the nodes, which is unique up to choice of the first node.
Then, the first node is defined as the end of the only edge of 
$Z_0$ that does not belong to the Hamiltonian cycle.
Thus, the renumbering is unique and so is the interesting walk.
\end{remark}

\subsection{Proof of Proposition~\ref{p:W0}}
\label{ss:ProofpW0}

Let us first note that the case $g=n$ is impossible unless $n=g=2$. Indeed, if $n=g>2$
then $\DM(g,n)=n(n-2)+n=n(n-1)>(n-1)^2+1$, 
which is the Wielandt bound for the 
periodicity transient, so in this case $\DM(g,n)$ cannot be attained. The case $n=g=2$ has been considered in
Proposition~\ref{p:n=g=2}.

The following elementary
number-theoretic lemma will be especially useful in what follows.

\begin{lemma}
\label{l:HA}
Let $a_1,\ldots, a_{s}\in\Z$. Then there is a nonempty subset 
$I\subseteq\{1,\ldots,s\}$ with 
$\sum_{i\in I} a_i\equiv_s 0$.
\end{lemma}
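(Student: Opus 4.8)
The plan is to use a pigeonhole argument on partial sums, which is the standard route for such a statement. First I would introduce the $s+1$ partial sums
\begin{equation*}
S_0 = 0, \quad S_k = \sum_{i=1}^{k} a_i \text{ for } 1 \le k \le s,
\end{equation*}
and consider their residues modulo~$s$. Since there are $s+1$ such sums but only~$s$ residue classes modulo~$s$, the pigeonhole principle guarantees two distinct indices $0 \le k < l \le s$ with $S_k \equiv_s S_l$.

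From this collision I would then read off the subset: setting $I = \{k+1, \ldots, l\}$, the difference telescopes to
\begin{equation*}
\sum_{i \in I} a_i = S_l - S_k \equiv_s 0.
\end{equation*}
The set~$I$ is nonempty precisely because $k < l$ (it contains at least the index~$l$), and it is a subset of $\{1,\ldots,s\}$ because $l \le s$ and $k+1 \ge 1$. This yields the claimed nonempty~$I$ with sum divisible by~$s$.

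I expect there to be no real obstacle here, as this is essentially just the pigeonhole principle applied to a chain of $s+1$ partial sums. The only two points requiring a word of care are \emph{nonemptiness} of~$I$, which is exactly what forces us to include $S_0 = 0$ among the partial sums (so that the collision can be with the empty initial sum, yet still produce a genuine nonempty block of consecutive indices), and the observation that the subset one obtains is in fact an interval $\{k+1,\ldots,l\}$ of consecutive indices — more than the statement requires, but convenient to note. No structure on the $a_i$ beyond being integers is used, and the bound~$s$ on the modulus is precisely what makes the counting $s+1 > s$ work.
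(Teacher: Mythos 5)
Your proof is correct: the pigeonhole argument on the $s+1$ partial sums $S_0=0,S_1,\ldots,S_s$ modulo~$s$ is exactly the standard proof of this fact, and your handling of nonemptiness (via the inclusion of $S_0$) is sound. The paper itself states Lemma~\ref{l:HA} without proof, calling it elementary, so your argument supplies precisely the intended reasoning and even the useful extra observation that the subset can be taken to be an interval of consecutive indices.
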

This lemma will allow us to remove some cycles from a walk an keep its length modulo~$s$ as soon as we have~$s$ cycles  that do not intersect in the walk.

The first step of the proof 
of Proposition~\ref{p:W0} is to 
establish properties of the structure of interesting walks.

\begin{lemma}
\label{l:leqg}
In any interesting walk~$W_0$, there are exactly $g$ occurrences of each node of $Z_0$ 
and exactly $g+1$ occurrences of each  node not in $Z_0$.
\end{lemma}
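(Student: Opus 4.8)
```latex
\subsection*{Proof proposal for Lemma~\ref{l:leqg}}

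The plan is to exploit the two defining properties of an interesting walk
$W_0$: it is \emph{twice optimal} (minimal length among maximal-weight walks
in $\walkslennode{i}{j}{t,g}{Z_0}$) and it has the maximal possible length
$\DM(g,n)+g-1 = g(n-2)+n+g-1 = gn-g+n-1 = (g+1)(n-1) + (g-1)$. Rewriting, one
finds that the total number of nodes of $W_0$ (its length plus one) equals
$gn + n - g = g(n-1) + n$, which is exactly $g$ copies of each of the $g$
critical nodes plus $g+1$ copies of each of the $n-g$ non-critical nodes. So
the lemma is really a \emph{counting} statement that pins down the
\emph{multiset} of nodes appearing in $W_0$, and the arithmetic above tells me
what answer to aim for. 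I would first record this target count explicitly.

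First I would establish the \textbf{lower bound} on occurrences. Since
$\lambda(A)=\1$, removing any closed subwalk only increases the weight (as
noted after Proposition~\ref{p:representation}), so in a maximal-weight walk we
may assume no node is visited ``too often'' without purpose. The key is
twice-optimality together with Lemma~\ref{l:HA}: if some node $v$ occurred with
multiplicity exceeding its target, then among the closed subwalks hanging off
the repeated visits to $v$ one can select (via Lemma~\ref{l:HA}, applied to
their lengths modulo $g$) a nonempty collection whose total length is
$\equiv 0 \pmod g$; removing these closed subwalks yields a shorter walk in the
\emph{same} residue class modulo $g$ with weight $\geq$ that of $W_0$,
contradicting minimality of length among maximal-weight walks. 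This forces an
upper bound on the multiplicity of every node. Conversely, because $W_0$ must
pass through $Z_0$ and achieve the full length $\DM(g,n)+g-1$, a node appearing
too \emph{few} times would leave the length too short; here I would use that
$Z_0$ has exactly $g$ nodes and that the only way to accumulate the required
length while staying optimal is to traverse each node the prescribed number of
times.

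The \textbf{main obstacle} I anticipate is the bookkeeping that separates the
two node-classes, namely showing the asymmetry ``$g$ occurrences for $Z_0$-nodes
versus $g+1$ for the rest.'' The natural explanation is that $W_0$ runs through
the critical cycle $Z_0$ (of length $g$), and each full traversal of $Z_0$
contributes its $g$ nodes while the ``extra'' passage through the non-critical
part contributes one additional visit to each off-cycle node. Making this
precise requires controlling how the length-$g$ residue constraint interacts
with the decomposition of $W_0$ into a $Z_0$-part and an acyclic (or nearly
acyclic) remainder. I would handle this by combining the occurrence upper
bounds from the removal argument with the exact length equation
$l(W_0)=\DM(g,n)+g-1$: the upper bounds give $\sum_{v\in Z_0}(\text{mult }v)\le
g\cdot g$ and $\sum_{v\notin Z_0}(\text{mult }v)\le (g+1)(n-g)$, and summing
these caps against the known total $l(W_0)+1 = g(n-1)+n$ forces \emph{equality}
in every bound simultaneously. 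This ``equality in a sum of upper bounds''
squeeze is what yields the exact counts and is, I expect, where the argument
needs the most care to rule out trade-offs between the two classes.
```
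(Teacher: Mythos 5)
Your skeleton is in fact the paper's: per-node occurrence caps obtained by deleting a nonempty collection of closed subwalks whose total length is divisible by $g$ (Lemma~\ref{l:HA}), followed by the squeeze $l(W_0)+1=g(n-1)+n=g\cdot g+(g+1)(n-g)$ forcing equality in every cap simultaneously. The counting step is correct (despite the slip $gn-g+n-1=(g+1)(n-1)+(g-1)$; in fact $l(W_0)=(g+1)(n-1)$ exactly), your separate ``lower bound'' paragraph is unnecessary once the squeeze is in place, and your removal argument does work for nodes of $Z_0$: if $v\in Z_0$ occurs $g+1$ times there are $g$ closed subwalks between consecutive occurrences, a sub-collection of total length $\equiv 0\pmod g$ can be deleted, and the shortened walk still passes through $v\in Z_0$, so it stays in $\walkslennode{i}{j}{t,g}{Z_0}$ and contradicts twice-optimality.

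The genuine gap is the cap $g+1$ for nodes $m\notin Z_0$ --- exactly the point you flag as ``the main obstacle'' and then leave unresolved. Your blanket removal argument cannot establish it, and here is a concrete way to see that it \emph{must} fail as stated: if it worked uniformly, then already $g+1$ occurrences of any node would produce $g$ closed subwalks and a removable sub-collection, giving the cap $g$ for \emph{every} node; but then the total occurrence count would be at most $gn<g(n-1)+n$, contradicting the known length of $W_0$. The flaw is that twice-optimality is minimality within $\walkslennode{i}{j}{t,g}{Z_0}$, so the shortened walk must still visit $Z_0$; when the deleted closed subwalks hang off a non-critical node $m$, they may contain every visit of $W_0$ to $Z_0$, and no contradiction arises. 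The paper closes this hole with a case analysis you would need in some form: write $W_0=W_1sW_2$ where $W_1$ is the maximal prefix avoiding $Z_0$ and $s\in Z_0$. Cycles through $m$ lying entirely inside $W_1$ or entirely inside $W_2$ can be removed without losing the visit to $s$, which caps the occurrences of $m$ in either single segment by $g$; and if $m$ occurs twice in $W_1$, the closed subwalk between those occurrences can be transplanted onto an occurrence of $m$ in $W_2$, so one may assume $W_1$ carries at most one occurrence of $m$. This yields the asymmetric cap $g+1$ ($1$ from $W_1$ plus $g$ from $W_2$), which is the ingredient your squeeze needs; without it the argument does not go through.
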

\begin{proof}
This is an improvement on the proof of Theorem~\ref{t:bounds} in~\cite{MNS} with extra care on the extremal cases.
Let us first argue that the number of occurrences does not exceed $g$ and $g+1$, respectively.

By contradiction, let $l\in Z_0$ occur $k\geq g+1$ times, then we have $W_0=V_0lV_1l\ldots 
lV_{k-1}lV_k$
where $l$ occurs in no $V_i$.

We have $k-1\geq g$ and by Lemma~\ref{l:HA} some of the cycles
$lV_{p}l$ (for $p=1,\ldots,k-1$) can be removed in such a way that the resulting walk has 
the same length modulo $g$. 
Moreover, the resulting walk still goes through a node of $Z_0$ (namely~$l$) and has the same length modulo $g$
meaning that $W_0$ is not twice optimal. 

Let $m\notin Z_0$ and decompose $W_0=W_1sW_2$ so that $W_1$ contains only nodes not in $Z_0$ and 
$s\in Z_0$. Then we have two cases:\\
a) One of the walks $W_1$ or $W_2$ does not contain $m$. Then the remaining walk can have
no more than $g$ occurrences of $m$, otherwise these occurrences lead to at least $g$ cycles some of which can be removed in such a way that the resulting walk
has the same length modulo $g$ and goes through a node of $Z_0$, contradicting the optimality of
$W_0$ (the weight of the resulting walk is also not smaller since by $\lambda(A)=\1$ the weight of each 
cycle is not bigger than $\1$).\\ 
b) Both $W_1$ and $W_2$ contain $m$ at least once. If there are at least two occurrences of 
$m$ in $W_1$ then the cycle between these occurrences can be moved to $W_2$. Hence we can assume
that $W_1$ contains $m$ no more than once. As in a), $m$ can occur in $W_2$ no more than $g$ times,
and the total number of $m$'s occurrences is thus bounded by $g+1$.

The total number of occurrences of all nodes in $W_0$ is thus bounded from above by $g^2+(n-g)(g+1)=n-g+ng$.
Observe now that the total number of these occurrences is exactly $g(n-1)+n=n-g+ng$, since the length of 
$W_0$ is $\DM(g,n)+g-1=g(n-1)+n-1$.  Hence each node in $Z_0$ occurs exactly $g$ times and each node not in $Z_0$ exactly $g+1$ times.
\end{proof}

\begin{lemma}[Interlacing]
\label{l:interlace}
Let $i\in Z_0$ and $j\notin Z_0$.
\begin{itemize}
\item[(i)] In any interesting walk, there is exactly one occurrence of node $j$ between two consecutive occurrences of~$i$.
\item[(ii)] In any interesting walk, there is exactly one occurrence of $j$ before the first and exactly one after the last occurrence of~$i$.
\end{itemize}
\end{lemma}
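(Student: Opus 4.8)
The plan is to fix a critical node $i\in Z_0$ and a non-critical node $j\notin Z_0$ and to lean on Lemma~\ref{l:leqg}, which tells us that in any interesting walk $W_0$ the node $i$ occurs exactly $g$ times and $j$ exactly $g+1$ times. The $g$ occurrences of $i$ cut $W_0$ into $g+1$ consecutive pieces: the piece before the first $i$, the $g-1$ pieces lying strictly between two consecutive occurrences of $i$, and the piece after the last $i$. Parts (i) and (ii) together assert precisely that each of these $g+1$ pieces contains exactly one occurrence of $j$. Since there are $g+1$ pieces and exactly $g+1$ occurrences of $j$, a counting argument reduces both statements at once to a single inequality: \emph{each piece contains at most one occurrence of $j$}. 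Equivalently, writing $j^{(0)},\dots,j^{(g)}$ for the occurrences of $j$ in order along $W_0$ and $C_1,\dots,C_g$ for the cycles $j^{(p-1)}\dots j^{(p)}$ between consecutive ones, I must show that every $C_p$ contains at least one occurrence of $i$.

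So I would argue by contradiction, supposing that some $C_q$ is $i$-free, i.e.\ that two consecutive $j$'s lie in the same $i$-piece. The engine for producing a contradiction is cycle removal against twice optimality. Given any subset of the cycles $C_p$ whose lengths sum to $0\pmod g$, deleting them produces a walk with the same endpoints as $W_0$, the same length modulo $g$, and --- because $\lambda(A)=\1$ forces every cycle to have weight at most $\1$ --- weight no smaller than $p(W_0)$; since $W_0$ already has maximal weight among the walks of its length class that meet $Z_0$, the deleted walk has the same maximal weight but strictly smaller length, contradicting that $W_0$ is twice optimal --- \emph{provided the deleted walk still meets $Z_0$}. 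The modular condition is supplied for free by Lemma~\ref{l:HA}: among the $g$ integers $l(C_1),\dots,l(C_g)$ there is always a nonempty subset summing to $0\pmod g$. A first, clean consequence, obtained by taking such a subset and observing that the prefix before $j^{(0)}$ and the suffix after $j^{(g)}$ are untouched by any deletion among the $C_p$, is that \emph{no node of $Z_0$ occurs before the first or after the last occurrence of $j$}; otherwise that node would survive the deletion and close the argument immediately.

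The genuine difficulty, and the step I expect to carry the bulk of the work, is guaranteeing that $Z_0$ is not entirely destroyed by the deletion, since Lemma~\ref{l:HA} controls only lengths and not which nodes disappear. To handle it I would split the cycles $C_p$ into those that cross an occurrence of $i$ and those lying inside a single $i$-piece (the latter being $i$-free), and combine two tools in the spirit of the two cases of Lemma~\ref{l:leqg}. The first is the ``move a cycle'' operation: an $i$-free cycle between two consecutive $j$'s can be excised and reinserted at another occurrence of $j$ without changing length, weight, node multiplicities, or the fact that the walk meets $Z_0$; hence the relocated walk is again interesting, and I may push the surplus $j$'s together so as to expose as many $i$-free cycles as possible at one occurrence of $j$. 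The second is the application of Lemma~\ref{l:HA} to a well-chosen family of cycles: when enough $i$-free cycles are available (which happens exactly when all the $j$'s can be concentrated in one piece) I apply it to their lengths alone, so that the deletion removes no occurrence of $i$ and the walk trivially still meets $Z_0$; when too few $i$-free cycles are available I must instead exhibit a deletable subset that omits at least one $i$-crossing cycle, so that an occurrence of $i$ --- or, using the consequence of the previous paragraph, some other $Z_0$-node carried by an undeleted cycle --- survives. Verifying that in every configuration one such deletion can be arranged, i.e.\ that the modular bookkeeping forced by Lemma~\ref{l:HA} is compatible with sparing a node of $Z_0$, is the crux of the lemma.
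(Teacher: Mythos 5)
Your pigeonhole reduction is sound: by Lemma~\ref{l:leqg} there are $g$ occurrences of $i$ and $g+1$ of $j$, so both parts do reduce to showing every gap between consecutive occurrences of $j$ contains an occurrence of $i$, and your engine (remove a nonempty subfamily of closed subwalks with total length $\equiv 0 \pmod g$, supplied by Lemma~\ref{l:HA}, and contradict twice optimality since $\lambda(A)=\1$ makes closed walks have weight at most $\1$) is exactly the paper's engine. But the step you yourself call the crux --- arranging the deletion so that the surviving walk still meets $Z_0$ --- is left as a plan, and with the toolkit you restrict yourself to (the $j$-based gap cycles $C_1,\dots,C_g$, relocation of $i$-free gaps to other occurrences of $j$, and Lemma~\ref{l:HA} applied to subfamilies of these) it cannot be closed. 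Concretely, take $g=2$: three occurrences of $j$, gaps $C_1,C_2$ with both occurrences of $i$ inside $C_1$ (so $C_2$ is $i$-free, the configuration you must refute), and $l(C_1),l(C_2)$ both odd. The only nonempty subfamily with even total length is $\{C_1,C_2\}$, and its removal --- given your own first consequence that no $Z_0$-node occurs before $j^{(0)}$ or after $j^{(g)}$ --- destroys every occurrence of every node of $Z_0$. Relocation does not help: moving a gap cycle to another occurrence of $j$ leaves the multiset of gap lengths unchanged, so the parity obstruction persists, and no deletable subset omitting the unique $i$-crossing cycle $C_1$ exists. Your engine stalls on a configuration it is required to kill.

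The missing idea, which is how the paper sidesteps the $Z_0$-survival problem entirely, is to cut closed subwalks at occurrences of $i$ as well as of $j$, exploiting the fact that removing a closed subwalk based at a node never deletes that node from the walk. For each $k$ with $0\le k<g$, the paper bounds the occurrences of $j$ on each side of the $(k+1)$-th occurrence of $i$: if there were more than $g-k$ after it, one has the $k$ closed walks based at $i$ between its first $k+1$ occurrences plus at least $g-k$ closed walks based at $j$ lying entirely after the $(k+1)$-th occurrence of $i$, i.e.\ at least $g$ closed walks in all; Lemma~\ref{l:HA} then yields a removable subfamily, and the $(k+1)$-th occurrence of $i$ survives \emph{automatically}, since the $i$-based closed walks keep their base point and the $j$-based ones lie strictly beyond it (the bound on occurrences of $j$ before that occurrence of $i$ is symmetric, using the $g-k-1$ closed walks at $i$ after it). In your $g=2$ scenario this is precisely the repair: trade $C_1$ for the closed walk based at $i$ between its two occurrences; whichever nonempty even-length subfamily of it and $C_2$ is removed, an occurrence of $i\in Z_0$ remains. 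So your counting framework is a correct reformulation, but the proof is genuinely incomplete at the point you flagged, and completing it requires this $i$-based recutting rather than a cleverer selection among the $C_p$.
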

\begin{proof}
%
We are going to prove that, for each $k\colon 0\leq k<g$, there are exactly
$k+1$ occurrences of $j$ before the $(k+1)$-th occurrence of $i$ and $g-k$
occurrences of $j$ after that occurrence of $i$. This implies both parts of the lemma.

We first show after  $k+1$ occurrences of $i$ we have no more than $g-k$ occurrences of $j$, for otherwise 
we have $k$ consecutive closed walks going through $i$ and at least $g-k$ consecutive closed walks going through $j$.
Figure~\ref{fig:flower} depicts the situation.
Thus the overall number of closed walks is at least $g$ and by Lemma~\ref{l:HA} some of these closed walks can be removed from the walk. The resulting walk still goes through a node of $Z_0$ (namely $i$) and has the same length modulo $g$, so 
$W_0$ is not twice optimal, a contradiction.

\begin{figure}
\centering
\begin{tikzpicture}[>=latex']
\node[circle,very thick,draw,minimum size=9mm] (n1) at (-5,0) {};
\node[circle,very thick,draw,minimum size=9mm] (ni) at (-2,0) {$i$};
\node[circle,very thick,draw,minimum size=9mm] (nj) at (2,0) {$j$};
\node[circle,very thick,draw,minimum size=9mm] (nf) at (5,0) {};

\draw[->,very thick] (n1) -- (ni);
\draw[->,very thick] (ni) -- (nj);
\draw[->,very thick] (nj) -- (nf);

\draw[->,very thick] (ni) .. controls +(145:4) and +(125:4) .. (ni) ;
\draw[->,very thick] (ni) .. controls +(125:4) and +(105:4) .. (ni) ;
\draw[->,very thick] (ni) .. controls +(105:4) and +(85:4) .. (ni) node [midway, above] {$k$ closed walks};
\draw[->,very thick] (ni) .. controls +(85:4) and +(65:4) .. (ni) ;

\draw[->,very thick] (nj) .. controls +(125:4) and +(105:4) .. (nj) ;
\draw[->,very thick] (nj) .. controls +(105:4) and +(85:4) .. (nj) ;
\draw[->,very thick] (nj) .. controls +(85:4) and +(65:4) .. (nj) node [midway, above] {$g-k$ closed walks};
\draw[->,very thick] (nj) .. controls +(65:4) and +(45:4) .. (nj) ;
\draw[->,very thick] (nj) .. controls +(45:4) and +(25:4) .. (nj) ;

\end{tikzpicture}
\caption{More than $g-k$ occurrences of $j$ after $k+1$ occurrences of $i$}
\label{fig:flower}
\end{figure}

Let us also show that we have no more than $k+1$ nodes of $j$ before the $(k+1)$-th occurrence of $i$. 
Indeed, after the $(k+1)$-th occurrence of $i$ we still have $g-k-1$ occurrences of~$i$ by Lemma~\ref{l:leqg}. If the hypothesis is not true 
then we have $g-k-1$ closed walks going through~$i$ and at least $k+1$ closed walks going through $j$. Figure~\ref{fig:flower2} depicts the situation.
 Thus we have at least $g$ 
closed walks in total and by Lemma~\ref{l:HA} some of these closed walks can be removed from the walk.  We conclude that $W_0$ is not twice optimal,
a contradiction.

\begin{figure}
\centering
\begin{tikzpicture}[>=latex']
\node[circle,very thick,draw,minimum size=9mm] (n1) at (-5,0) {};
\node[circle,very thick,draw,minimum size=9mm] (nj) at (-2,0) {$j$};
\node[circle,very thick,draw,minimum size=9mm] (ni) at (2,0) {$i$};
\node[circle,very thick,draw,minimum size=9mm] (nf) at (5,0) {};

\draw[->,very thick] (n1) -- (nj);
\draw[->,very thick] (nj) -- (ni);
\draw[->,very thick] (ni) -- (nf);

\draw[->,very thick] (ni) .. controls +(125:4) and +(105:4) .. (ni) ;
\draw[->,very thick] (ni) .. controls +(105:4) and +(85:4) .. (ni) ; 
\draw[->,very thick] (ni) .. controls +(85:4) and +(65:4) .. (ni) node [midway, above] {$g-k-1$ closed walks};
\draw[->,very thick] (ni) .. controls +(65:4) and +(45:4) .. (ni) ;

\draw[->,very thick] (nj) .. controls +(145:4) and +(125:4) .. (nj) ;
\draw[->,very thick] (nj) .. controls +(125:4) and +(105:4) .. (nj) ;
\draw[->,very thick] (nj) .. controls +(105:4) and +(85:4) .. (nj) node [midway, above] {$k+1$ closed walks};
\draw[->,very thick] (nj) .. controls +(85:4) and +(65:4) .. (nj) ;
\draw[->,very thick] (nj) .. controls +(65:4) and +(45:4) .. (nj) ;

\end{tikzpicture}
\caption{More than $k+1$ occurrences of $j$ before $(k+1)$-th occurrence of $i$}
\label{fig:flower2}
\end{figure}

However, the total number of occurrences of $j$ before and after the $(k+1)$-th occurrence of $i$ is $g+1$,
and therefore there are exactly $k+1$ occurrences of $j$ before the $(k+1)$-th occurrence of $i$, and $g-k$ occurrences of $j$ 
after that occurrence.
\end{proof}

\begin{corollary}
\label{c:interlacing}
In any interesting walk,
there is exactly one occurrence of node $i\in Z_0$ between every two consecutive occurrences of $j\notin Z_0$,
and no occurrences of $i\in Z_0$ neither before the first nor after the last occurrence of~$j\notin Z_0$.  
\end{corollary}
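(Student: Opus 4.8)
The plan is to derive this corollary directly from the Interlacing Lemma~\ref{l:interlace} together with the occurrence counts of Lemma~\ref{l:leqg}, by observing that these two results completely pin down the interleaving of the occurrences of $i$ and $j$ inside the interesting walk, so that the corollary is merely the ``dual'' reading of the same pattern from the viewpoint of $j$ rather than $i$.

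First I would restrict attention to the subsequence $\sigma$ of the interesting walk consisting only of the occurrences of $i\in Z_0$ and $j\notin Z_0$. By Lemma~\ref{l:leqg}, $\sigma$ contains exactly $g$ copies of $i$ and exactly $g+1$ copies of $j$. Part (ii) of Lemma~\ref{l:interlace} forces the first symbol of $\sigma$ to be a $j$ (since exactly one $j$ precedes the first $i$) and the last symbol to be a $j$ (since exactly one $j$ follows the last $i$); part (i) forces exactly one $j$ between any two consecutive $i$'s. Because each of these assertions says \emph{exactly one}, there is no freedom left in the interleaving, and $\sigma$ must be the perfectly alternating word
\[
\sigma = j\, i\, j\, i\, \cdots\, i\, j ,
\]
beginning and ending with $j$, with $g$ occurrences of $i$ and $g+1$ of $j$. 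As a sanity check, the count of $j$'s predicted by Lemma~\ref{l:interlace} is $1+(g-1)+1=g+1$, consistent with Lemma~\ref{l:leqg}.

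It then remains only to read off this alternating word from the standpoint of $j$: the $g+1$ occurrences of $j$ delimit exactly $g$ internal gaps, each containing precisely one $i$; and since $\sigma$ both starts and ends with $j$, there is no $i$ before the first $j$ nor after the last $j$. This is exactly the statement of the corollary.

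I do not expect a genuine obstacle here, as this is essentially a bookkeeping consequence of the preceding lemmas. The only point requiring care is to confirm that Lemma~\ref{l:interlace} truly fixes the alternation, and not merely bounds the number of $j$'s in each region; that is, one must use that each occurrence count there is established as an exact value, which leaves the interleaving of $i$ and $j$ uniquely determined. Once that is granted, the dual statement follows immediately.
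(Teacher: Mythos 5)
Your proof is correct and matches the paper's approach: the paper states Corollary~\ref{c:interlacing} without any proof, treating it as an immediate consequence of Lemmas~\ref{l:leqg} and~\ref{l:interlace}, and your argument---that the exact counts in those lemmas force the subsequence of occurrences of $i$ and $j$ to be the perfectly alternating word $j\,i\,j\,\cdots\,i\,j$, from which the dual statement is read off---is precisely that bookkeeping made explicit.
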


\begin{lemma}
\label{l:purpleyellow}
Any interesting walk
$W_0$ can be represented as 
\begin{equation}
\label{e:purpleyellow}
W_0=P Q P_1 V,
\end{equation}
where $P$ and $P_1$ contain all nodes not in $Z_0$ exactly once and only them, 
$Q$ contains all nodes of $Z_0$ exactly once and only them,
and $V$ is a walk starting with a node in $Z_0$.\\
\end{lemma}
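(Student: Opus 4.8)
The plan is to peel the blocks $P$, $Q$, $P_1$ off the front of $W_0$ in turn, at each stage combining the occurrence counts of Lemma~\ref{l:leqg} with the interlacing statements of Lemma~\ref{l:interlace} and Corollary~\ref{c:interlacing}. The one fact I would use again and again is that between two consecutive occurrences of a node of~$Z_0$ there is exactly one occurrence of each node outside~$Z_0$, and dually that between two consecutive occurrences of a node outside~$Z_0$ there is exactly one occurrence of each node of~$Z_0$. Since $g\ge 2$ and $n>g$ in the present setting, every node of~$Z_0$ occurs at least twice and every node outside~$Z_0$ occurs at least twice, so that \emph{second occurrences} are always available.

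First I would identify~$P$. Let~$i_0$ be the node of~$Z_0$ whose first occurrence in~$W_0$ is earliest; by its choice, every node strictly before it lies outside~$Z_0$. By Lemma~\ref{l:interlace}(ii), each node outside~$Z_0$ occurs exactly once before this first occurrence of~$i_0$, so the prefix preceding~$i_0$ is a permutation of the $n-g$ non-critical nodes; this is~$P$. I would then take~$Q$ to be the maximal run of nodes of~$Z_0$ that immediately follows~$P$. No node of~$Z_0$ repeats within~$Q$, since two equal nodes of~$Z_0$ with no node outside~$Z_0$ between them would contradict Lemma~\ref{l:interlace}(i); thus~$Q$ has at most~$g$ nodes. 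To show it has exactly~$g$, consider the first node~$j'$ outside~$Z_0$ occurring after~$Q$: this is the second occurrence of~$j'$, its first lying in~$P$, and by Corollary~\ref{c:interlacing} all~$g$ nodes of~$Z_0$ occur once between these two occurrences of~$j'$. As the portion of this range lying outside~$Q$ contains no node of~$Z_0$, all~$g$ of them lie in~$Q$, so~$Q$ is a permutation of the nodes of~$Z_0$.

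The blocks~$P_1$ and~$V$ come out by mirroring the previous step. I would let~$P_1$ be the maximal run of nodes outside~$Z_0$ following~$Q$, with no repetition inside it by the same interlacing argument (Corollary~\ref{c:interlacing}). Let~$i'$ be the first node of~$Z_0$ after this run; it exists because each node of~$Z_0$ occurs once in~$Q$ and has $g-1\ge 1$ further occurrences, and its appearance after the run is therefore its second occurrence. By Lemma~\ref{l:interlace}(i) every node outside~$Z_0$ occurs exactly once between the two occurrences of~$i'$, while the only part of that range lying in~$Z_0$ is the tail of~$Q$, which contributes none; hence all $n-g$ non-critical nodes appear in the run, so~$P_1$ is a permutation of them. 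Taking~$V$ to be the remaining suffix, which begins with~$i'\in Z_0$, yields $W_0=PQP_1V$. The step I expect to be delicate is exactly the completeness of~$Q$ and of~$P_1$: ruling out that a maximal run terminates before every node of its type has appeared. The device of reading off the forced second occurrence of the first opposite-type node just past the run, and then invoking the \emph{one of each} interlacing count, is what I would use to close this gap in both cases.
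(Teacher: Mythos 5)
Your proof is correct and follows essentially the same route as the paper's: peel off the maximal non-critical prefix~$P$ via Lemma~\ref{l:interlace}(ii), then establish completeness of the maximal critical run~$Q$ and of the maximal non-critical run~$P_1$ by locating the forced second occurrence of the first opposite-type node just past each run and applying the interlacing counts between its two consecutive occurrences. If anything, you are slightly more careful than the paper, which glosses over the existence of those next opposite-type occurrences (which you justify by the occurrence counts of Lemma~\ref{l:leqg}) and cites Lemma~\ref{l:interlace}(i) where the dual statement of Corollary~\ref{c:interlacing} is the one actually needed for~$Q$.
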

\begin{proof}
Define $P$ such that $W_0= P V_0$, where all nodes of
$P$ are not in $Z_0$ and the first node of $V_0$ is in $Z_0$. By Lemma~\ref{l:interlace} part (ii),
$P$ contains all nodes not in $Z_0$ exactly once (and only them).

Define $Q$ as the subpath of $V_0$ such that $V_0= Q U_1$, where all nodes of
$Q$ are in $Z_0$ and the first node of $U_1$ is not in $Z_0$. That node also occurs once in $P$.
 By Lemma~\ref{l:interlace} part (i), all nodes of $Z_0$ should occur between the two occurrences of that node
exactly once, and therefore $Q$ contains all such nodes exactly once (and only them). 

Define $P_1$ such that $U_1= P_1V$, where all nodes of
$P_1$ are not in $Z_0$ and the first node of $V$ is in $Z_0$.  That node also occurs once in $Q$. 
 By Lemma~\ref{l:interlace} part (ii), all nodes that are not in $Z_0$ should occur between the two occurrences of 
that node exactly once, and therefore 
$P_1$ contains all such nodes exactly once (and only them).
\end{proof}

\begin{proof}[{\bf Proof of Proposition~\ref{p:W0}}]
Let~$W_0$ be an interesting walk and $P$~and~$Q$ defined as in~\eqref{e:purpleyellow}.
We want to show

\begin{equation}
\label{e:W01}
W_0= P (Q P)^g.
\end{equation}

We start with the decomposition~\eqref{e:purpleyellow}. Define 
$k\in Z_0$ as the last node of $Q$ and $Q'$ by~$Q=Q'k$. 

\begin{figure}
\centering
\begin{tikzpicture}[>=latex']
\node[circle,very thick,draw,minimum size=9mm] (n1) at (-6,0) {};
\node[circle,very thick,draw,minimum size=9mm] (nj) at (-2,0) {};
\node[circle,very thick,draw,minimum size=9mm,red] (ni) at (2,0) {$k$};
\node[circle,very thick,draw,minimum size=9mm] (nf) at (6,0) {};

\draw[->,very thick] (n1) -- (nj) node [midway, above] {$P$};
\draw[->,very thick,red] (nj) -- (ni) node [midway, above] {$Q'$};
\draw[->,very thick] (ni) -- (nf) node [midway, above] {$U$};

\draw[->,very thick] (ni) .. controls +(145:4) and +(105:4) .. (ni) node [pos=0.2, left] {$P_1$} node [pos=0.8, right] {$V_1$};
\draw[->,very thick] (ni) .. controls +(75:4) and +(35:4) .. (ni) node [pos=0.5, above] {$W_\alpha$, $\alpha\geq 2$};
\end{tikzpicture}
\caption{Structure of interesting walk $W_0$ as of \eqref{e:frompurpleyellow}}
\label{f:frompurpleyellow}
\end{figure}

As a first step, using~\eqref{e:purpleyellow} and observing $g$ occurrences of $k$ we can immediately obtain
\begin{equation}
\label{e:frompurpleyellow}
W_0=PQ' kW_1kW_2\dots k W_{g-1}kU,
\end{equation}
see Figure~\ref{f:frompurpleyellow}.
Decomposition~\eqref{e:purpleyellow} also implies that the subpath $PQ$ in the beginning should be followed
by a sequence $P_1$ containing all non-critical nodes once (and only them), in any interesting walk. Therefore we have
$W_1=P_1 V_1$ for some $V_1$.

\begin{figure}
\centering
\begin{tikzpicture}[>=latex']
\node (n1) at (-6,0) {};
\node[circle,very thick,draw,minimum size=9mm] (nkp) at (-2,0) {$k'$};
\node[circle,very thick,draw,minimum size=9mm,red] (nk) at (2,0) {$k$};
\node (nf) at (6,0) {};
\node[circle,very thick,draw,minimum size=9mm] (nkp2) at (4,2) {$k'$};

\draw[->,very thick] (n1) -- (nkp);
\draw[->,very thick] (nkp) -- (nk);
\draw[->,very thick] (nk) -- (nf);

\draw[->,very thick] (nk) .. controls +(0.8,1.2) .. (nkp2); 
\draw[->,very thick] (nkp2) .. controls +(-0.8,-1.2) .. (nk);
\draw[->,very thick] (nkp2) .. controls +(180:3) and +(140:3) .. (nkp2) node [pos=0.5, left] {$R$};
\draw[->,very thick,dashed] (nkp) .. controls +(110:3) and +(70:3) .. (nkp) node [pos=0.5, above] {$R$};

\draw[|->,very thick] (1.2,2.7) -- (-1.6,2.5) node [midway, above] {move $R$};
\end{tikzpicture}
\caption{Node $k'$ appears twice in $C_1=kPV_1k$}
\label{f:moveR}
\end{figure}

The cycle $C_1=kP_1V_1k$ contains each node (critical and non-critical) no more than once.
Indeed, if a node $k'$ occurred in 
$C_1$ twice then we would decompose $C_1=k\dots k'Rk'\dots k$,
replace the node $k'$ in $PQ$ by $k'Rk'$ and delete $Rk'$ from $C_1$, see Figure~\ref{f:moveR}. This 
would result in a new walk with $g$ consecutive closed walks some of which can be 
removed, 
resulting in a walk of a smaller length and showing that the initial walk was not interesting,
a contradiction.

\begin{figure}
\centering
\begin{tikzpicture}[>=latex']
\node (n1) at (0,0) {};
\node[circle,very thick,draw,minimum size=9mm] (nj1) at (2.4,0) {$j$};
\node[circle,very thick,draw,minimum size=9mm,red] (nk1) at (4.8,0) {$k$};
\node[circle,very thick,draw,minimum size=9mm] (nj2) at (7.2,0) {$j$};
\node[circle,very thick,draw,minimum size=9mm,red] (nk2) at (9.6,0) {$k$};
\node (nf) at (12,0) {};

\node at (2.4,1) {$PQ'$};
\node at (7.2,1) {$P_1V_1$};

\draw[->,very thick] (n1) -- (nj1) node [midway, below] {$S$};
\draw[->,very thick] (nj1) -- (nk1) node [midway, below] {$T$};
\draw[->,very thick] (nk1) -- (nj2) node [midway, below] {$S'$};
\draw[->,very thick] (nj2) -- (nk2) node [midway, below] {$T'$};
\draw[->,very thick] (nk2) -- (nf);

\draw[<->,very thick,dashed] (3.6,-0.7) .. controls +(2.4,-1) .. (8.4,-0.7) node [midway,below] {exchange};

\end{tikzpicture}
\caption{Exchange if $PQ' \neq P_1V_1$}
\label{f:exchange}
\end{figure}

Since any node is contained in $C_1$ no more than once,
$V_1$ consists of nodes of $Z_0$ only. 
Comparing~\eqref{e:W01} with~\eqref{e:frompurpleyellow} 
we need to prove that $V_1=Q'$ and that $P_1=P$. 
For that, take any node $j\in P_1V_1$. It also occurs in $PQ'$ since that path contains all nodes but $k$.
Consider the following decompositions $PQ'k=SjTk$ and $P_1V_1k=S'jT'k$. If we assume that $Q'\neq V_1$ or 
$P_1\neq P$ then for some $j$ the sets of nodes of $S$ and $S'$ differ
or the sets of nodes of $T$ and $T'$ differ. Assume the latter (the case of different $S$ and $S'$ is treated similarly).
By replacing $PQ'k$ with $SjT'k$ and 
$P_1V_1k$ by $S'jTk$ as in Figure~\ref{f:exchange} (in other words, by exchanging $T$ and $T'$) 
we obtain a new interesting walk. We now prove that 
it is not of the form~\eqref{e:purpleyellow}, in contradiction with Lemma~\ref{l:purpleyellow}.

Indeed, we have $SjT'k=\Tilde{P}\Tilde{Q}$ where $\Tilde{P}$ consists only of nodes not in $Z_0$, and $\Tilde{Q}$ consists only of nodes
in $Z_0$. Similarly, $S'jTk=\Tilde{P_1}\Tilde{Q_1}$ where $\Tilde{P_1}$ consists only of nodes not in $Z_0$, and $\Tilde{Q_1}$ consists only of nodes
in $Z_0$. However, the set of nodes of $Tk$ is a complement of the set of nodes of $Sj$ 
(recall that $PQ'k=SjTk$ and all nodes occur in $PQ$ exactly once)
and that of $T'k$ is not (since $T$ and $T'$ have different node sets), and this implies that $\Tilde{P}$ or $\Tilde{Q}$ miss some nodes in contradiction with 
Lemma~\ref{l:purpleyellow}. Hence $P_1=P$ and $V_1=Q'$.

Generalizing the cycle~$C_1$, define $C_\alpha = kW_\alpha k$ for all $1\leq\alpha\leq g-1$.
Since we can exchange any two $C_\alpha$ without changing neither the length, nor the weight of the walk, the decomposition of $C_1$
is also true for any~$C_\alpha$, that is $C_{\alpha}=kPQ'k$ for all $\alpha$.

Now, each critical node occurs in the walk $PQ' kW_1kW_2\dots k W_{g-1}k =(PQ)^g$ exactly $g$ times,
hence, by Lemma~\ref{l:interlace} part (ii), $U$ contains all non-critical nodes exactly once, and only them.
So we have obtained that $W_0=(PQ)^gU$ where $U$ contains all non-critical nodes exactly once, and only them.

It remains to show that $V=P$.
In  $\digr(\trans{A})$ (the graph of the transpose of $A$) there  is also an interesting walk.
Since $(\trans{A})^m=\trans{(A^m)}$ for all $m\geq 1$, Walk $\trans{W_0}$, that starts with the end of $W_0$
and goes to the beginning of $W_0$ via exactly the same nodes listed in the opposite order is an interesting walk on~$\digr(\trans{A})$.
On one hand, by construction $\trans{W_0}=\trans{V}(\trans{Q}\trans{P})^{g}$, where $\trans{V}$, $\trans{Q}$ and $\trans{P}$ contain 
the same nodes as $V$, $Q$ and $P$ listed in the opposite order. On the other hand, applying the argument above we get a decomposition 
of the form $\trans{W_0}=(\overline{P}\;\overline{Q})^g \overline{V}$, and, since $g\ge 2$, we conclude that $\trans{V}=\trans{P}=\overline{P}$.
This implies $V=P$, so the decomposition~\eqref{e:W01} is established.

In order to obtain~\eqref{e:W0} we renumber the nodes of $\digr(A)$ in such a way that $QP=1\dots n$.
\end{proof}

\begin{corollary}
\label{c:coprime}
If $T_1(A)=\DM(g,n)$ then $n$ and $g$ are coprime (Condition~\ref{Cgnprime}).
\end{corollary}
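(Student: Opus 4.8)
The plan is to read off the explicit shape of an interesting walk from Proposition~\ref{p:W0} and to contradict its twice optimality if $g$ and $n$ had a common factor. I keep the standing hypotheses of Proposition~\ref{p:W0} (so $g\ge 2$ and the case $n=g=2$ is excluded, the latter being covered by Proposition~\ref{p:n=g=2}) together with the normalization $\lambda(A)=\1$, under which every cycle has weight at most $\1$ and every critical cycle has weight exactly $\1$.

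First I would fix an interesting walk $W_0$ and renumber the nodes as in Proposition~\ref{p:W0}, so that
\[
W_0=(g+1)\ldots n\,(1\cdots n)^g,
\]
where $1\cdots g1$ is the unique critical cycle $Z_0$ and $1\cdots n1$ is a Hamiltonian cycle. Thus $W_0$ runs from $g+1$ to $n$, lies in $\walkslennode{g+1}{n}{t,g}{Z_0}$ with $t=\DM(g,n)-1$, and contains exactly $g$ traversals of the Hamiltonian cycle $1\cdots n1$.

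Next, supposing for contradiction that $d:=\gcd(g,n)>1$, I would write $g=d\tilde g$ and $n=d\tilde n$, so that $1\le\tilde g<g$ and the identity $\tilde g\,n=\tilde n\,g$ holds. Deleting $\tilde g$ of the $g$ Hamiltonian traversals yields $W_0'=(g+1)\ldots n\,(1\cdots n)^{g-\tilde g}$, again a walk from $g+1$ to $n$. Since $g-\tilde g\ge 1$, at least one Hamiltonian traversal remains, so $W_0'$ still meets $Z_0$ and hence lies in the same walk set. Its length has decreased by $\tilde g\,n=\tilde n\,g$, a positive multiple of $g$; therefore $W_0'$ is strictly shorter than $W_0$ yet has the same length modulo $g$. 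Because the removed copies of $1\cdots n1$ each have weight at most $\1$, we also get $p(W_0')\ge p(W_0)$.

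The contradiction then comes from twice optimality. If $1\cdots n1$ has weight strictly below $\1$, then $p(W_0')>p(W_0)$ contradicts the maximality of the weight of $W_0$ in $\walkslennode{g+1}{n}{t,g}{Z_0}$; otherwise $p(W_0')=p(W_0)$ is maximal while $W_0'$ is strictly shorter, contradicting the minimality of the length of $W_0$ among maximal weight walks. In both cases $W_0$ fails to be twice optimal, which is absurd, so $d=1$. I expect the only delicate point to be the congruence bookkeeping: the whole argument rests on the number-theoretic identity $\tilde g\,n\equiv 0\pmod g$, which is precisely what lets the deletion preserve the length class modulo $g$ while genuinely shortening the walk; verifying that $W_0'$ stays inside the prescribed walk set is then routine.
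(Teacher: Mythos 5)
Your proof is correct and takes essentially the same route as the paper: the paper's proof removes from $W_0$ the Hamiltonian cycles forming a closed walk of length $p=\operatorname{lcm}(g,n)<gn$, which is exactly your deletion of $\tilde g$ copies of $1\cdots n1$ (since $p/n=g/\gcd(g,n)=\tilde g$), and derives the same contradiction with twice optimality. Your explicit case distinction (strictly larger weight versus equal weight but strictly smaller length) is just a slightly more careful rendering of the paper's one-line conclusion.
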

\begin{proof}
\if{
If $n$ and $g$ are not coprime, then their least common multiple denoted by $p$ is 
smaller than $gn$. We have $gn=tp$ where $t>1$. Writing $gn=p+(t-1)p$ we observe that
the Hamiltonian cycles forming a closed walk of length $p$ can be removed and the rest of the Hamiltonian cycles can be kept.
This leads to a shorter walk with larger weight, in contradiction with the double optimality
of $W_0$.
}\fi

If $n$ and $g$ are not coprime, then $d=\gcd(n,g)>1$. We have $g=pd$ and $n=qd$ for some $p$ and $q$. Let 
$W_0$ be given by~\eqref{e:W0} and $W_1$ be $(g+1)\dots n (1\dots n)^{g-p}$. Since $pn=gq$, we have
$l(W_1)$ and $l(W_0)$ are congruent modulo $g$, and since $p(W_1)\geq p(W_0)$, we obtain that 
$W_0$ is not twice optimal, so $T_1(A,B)<\DM(g,n)$ by Proposition~\ref{p:W0}, a contradiction.
\end{proof}

\subsection{Proof of Necessity}

In this subsection, we finish the proof of necessity of Conditions 1.--6.
and the last statement of the theorem. We assume~$T_1(A)=\DM(g,n)$.

Condition~\ref{Cgnprime} was proved as Corollary~\ref{c:coprime}.
By Proposition~\ref{p:critgraph}, $\crit(A)$ is strongly connected and contains only one cycle with length~$g$ denoted by~$Z_0$. (Condition~\ref{CGc}).

We now turn to the proof of Conditions~\ref{CZO}~\ref{CA2} and~\ref{CB1}, which will be proved together.
The core of the proof is split into Lemmas~\ref{l:propertyA}, \ref{l:propertyB} and~\ref{l:CSRineq} below.

By Proposition~\ref{p:W0} there is a unique twice optimal walk $W_0$ of length $\DM(g,n)+g-1$. 
After renumbering the nodes we can assume that $(i,i+1)$ for $1\leq i\leq (n-1)$
and $(n,1)$ are the arcs of a Hamiltonian cycle of $\digr(A)$, and that nodes $\{1,\ldots,g\}$, are the nodes of~$Z_0$.

Notice that we have not yet proved $Z_0=1\dots g1$ (condition~\ref{CZO}) since we do not know the arcs of~$Z_0$.
 
Any occurrence of a node in $W_0$ can be encoded by its {\em position} in that walk.
We now define what we mean by position. We assume that the first occurrence of node $n$ has position~$0$, and the position
of any node $i$ in the $k$th copy of $1\dots n$ (called {\em period}) is $i+(k-1)n$, and the position of the $i$th node in the part of the walk before the first occurrence of $1$
(for $i\in\{g+1,\ldots,n\}$) is $i-n$. Note that
these positions 
(and only these) are non-positive. 

We will be interested in the set of subwalks of $W_0$ from $i$ to $j$, with length $1$ modulo $g$.
Denote this set by $\walkslen{i}{j}{1,g}[W_0]$. 
This set is nonempty if and only if there is an occurrence of node $i$ 
at 
some position denoted by $N_i^b$ and there is an occurrence of node $j$ at some position denoted by $N_j^e$ such that 
$N_j^e>N_i^b$ and $N_j^e-N_i^b\equiv_g 1$.

Consider the following properties of subwalks of $W_0$:

{\bf Property A:}  We say that a subwalk $W\in\walkslen{i}{j}{1,g}[W_0]$ has this property
if it goes through one of the first $g$ nodes (i.e. a node of~$Z_0$). 

{\bf Property B:}  We say that a subwalk $W\in\walkslen{i}{j}{1,g}[W_0]$ has this property if 
after replacing $W$ in $W_0$ by the arc $(i,j)$ where $i$ and $j$ are the beginning node and the
end node of $W$ respectively, the resulting walk $W'_0$ goes through one of the first 
$g$ nodes.   


A subwalk $W\in\walkslen{i}{j}{1,g}[W_0]$ does not have Property B if and only if it begins in the tail, ends in the last period of $W_0$
and has $i,j>g$.

Define $A_1$, $B_1$ and $A_2$ by~\eqref{e:A1},~\eqref{e:B1} and~\eqref{e:A2}.


\begin{lemma}
\label{l:propertyA}
If $W\in\walkslen{i}{j}{1,g}[W_0]$ has Property A then $p(W)\leq (CSR)_{i,j}[A]$.
\end{lemma}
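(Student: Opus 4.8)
The plan is to use the optimal walk interpretation of $CSR$ terms provided by Proposition~\ref{p:representation}. Recall that for $\gamma = g$ (a multiple of $\gamma(\crit(A))$ since $Z_0$ has length $g$ and is the unique critical cycle), and taking $\mN = \{k\}$ for a single critical node $k$ on $Z_0$, we have
\begin{equation*}
(CS^1R)_{ij}[A] = p\!\left(\walkslennode{i}{j}{1,g}{Z_0}[A]\right),
\end{equation*}
the maximal weight over all walks from $i$ to $j$ in $\digr(A)$ that visit a node of $Z_0$ and have length congruent to $1$ modulo $g$. Since $CSR = CS^1R$ denotes precisely this term, the goal reduces to exhibiting, for a subwalk $W \in \walkslen{i}{j}{1,g}[W_0]$ satisfying Property~A, a walk in $\digr(A)$ from $i$ to $j$ that visits $Z_0$, has length $\equiv 1 \pmod g$, and has weight at least $p(W)$.

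The key observation is that $W$ \emph{itself} already qualifies: being a subwalk of $W_0$, it is a genuine walk in $\digr(A)$; by hypothesis it has length $1$ modulo $g$; and Property~A states precisely that $W$ goes through one of the first $g$ nodes, i.e.\ through a node of $Z_0$. Therefore $W$ belongs to the walk set $\walkslennode{i}{j}{1,g}{Z_0}[A]$ whose maximal weight defines $(CS^1R)_{ij}[A]$. First I would make this membership explicit, invoking that $\{1,\ldots,g\}$ are by construction (after the renumbering from Proposition~\ref{p:W0}) the nodes of $Z_0$, so that ``goes through one of the first $g$ nodes'' is synonymous with ``goes through a node of $Z_0$''. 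Then the conclusion $p(W) \leq (CSR)_{ij}[A]$ follows immediately, since $p(W)$ is the weight of one particular element of the set over which the maximum in Proposition~\ref{p:representation} is taken.

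The only subtlety I anticipate — and the single point deserving care — is the choice of $\gamma$ and the set $\mN$ of representative critical nodes in Proposition~\ref{p:representation}. Because $\crit(A)$ is strongly connected with cyclicity dividing $g$, taking $\gamma = g$ is legitimate and $\mN$ may be any single node of $Z_0$; one must check that visiting \emph{some} node of $Z_0$ (rather than the specific representative $k$) suffices, which holds because strong connectivity of $\crit(A)$ lets us route through $k$ while adjusting the length by multiples of $g$ via insertions of $Z_0$, without decreasing weight (here $\lambda(A) = \1$ guarantees all cycles, in particular $Z_0$, have weight $\le \1$, so such adjustments never lower $p(W)$). This is exactly the content already packaged into Proposition~\ref{p:representation}, so in the write-up I would simply cite it with $\gamma = g$ and a critical node of $Z_0$ as the marker, and note that membership of $W$ in the indexed walk set is immediate from Property~A. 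No genuine obstacle remains; the lemma is essentially a direct reading of the walk interpretation of $CSR$.
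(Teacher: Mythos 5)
Your proof is correct and is essentially the paper's own argument: the paper likewise notes that Property~A means $W\in\walkslennode{i}{j}{1,g}{Z_0}$ in $\digr(A)$ and that $(CSR)_{ij}[A]$ is the maximal weight over that set by Proposition~\ref{p:representation} (with $\lambda(A)=\1$). Your closing worry about the marker set is dispatched even more directly by taking $\mN$ to be the node set of $Z_0$ itself, which satisfies the hypothesis of Proposition~\ref{p:representation} since $\crit(A)$ is strongly connected.
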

\begin{proof}
Property A means that $W$ belongs to 
$\walkslennode{i}{j}{1,g}{Z_0}$ on $\digr(A)$, since the first $g$ nodes are the node set of $Z_0$.
As $(CSR)_{i,j}[A]$ is the largest weight of walks in $\walkslennode{i}{j}{1,g}{Z_0}$ on $\digr(A)$ (recall $\lambda(A)=\1$
and Proposition~\ref{p:CSRprops} part~(\ref{CSR-walks})), the claim 
follows.
\end{proof}

\begin{lemma}
\label{l:propertyB}
If  $W\in\walkslen{i}{j}{1,g}[W_0]$ has Property B and $j\neq i+1$ and $(i,j)\neq (n,1)$ then $a_{i,j}<p(W)$.
\end{lemma}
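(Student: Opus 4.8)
The plan is to argue by contradiction, exploiting the twice optimality of the interesting walk~$W_0$. Suppose $a_{ij}\geq p(W)$; if $a_{ij}=\0$ there is nothing to prove, since $p(W)$ is finite (all arcs of the genuine subwalk~$W$ of~$W_0$ have finite weight), so $a_{ij}=\0<p(W)$. Thus we may assume $a_{ij}\neq\0$, i.e. the arc $(i,j)$ exists in $\digr(A)$. I would then form the walk $W'_0$ obtained from $W_0$ by replacing the subwalk occurrence~$W$ (from its start position to its end position) by the single arc~$(i,j)$. Because $W$ runs from $i$ to $j$, this substitution leaves the two endpoints of~$W_0$ untouched, so $W'_0$ is again a walk from $g+1$ to $n$.

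Next I would verify that $W'_0$ lies in the very walk set for which $W_0$ is twice optimal, namely $\walkslennode{g+1}{n}{\DM(g,n)-1,g}{Z_0}$. Its weight satisfies $p(W'_0)\geq p(W_0)$, since the prefix and suffix of~$W_0$ are unchanged and the middle factor $p(W)$ has been replaced by $a_{ij}\geq p(W)$. Its length is $l(W'_0)=l(W_0)-l(W)+1$; as $l(W)\equiv 1\pmod g$ this yields $l(W'_0)\equiv l(W_0)\equiv \DM(g,n)-1\pmod g$. Finally, Property~B says precisely that $W'_0$ passes through a node of $\{1,\dots,g\}$, i.e. through~$Z_0$. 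Hence $W'_0\in\walkslennode{g+1}{n}{\DM(g,n)-1,g}{Z_0}$, and $p(W'_0)\geq p(W_0)$ forces $p(W'_0)=p(W_0)$, the maximal weight attained over that set.

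The crux is to show that the length strictly decreases, $l(W'_0)<l(W_0)$, which needs $l(W)\geq 2$. This is exactly where the hypotheses $j\neq i+1$ and $(i,j)\neq(n,1)$ enter: the only arcs occurring in $W_0=(g+1)\cdots n(1\cdots n)^g$ are the Hamiltonian arcs $(i,i+1)$ and $(n,1)$, so a subwalk of~$W_0$ of length~$1$ from~$i$ to~$j$ would force $(i,j)$ to be one of these. Under our hypotheses~$W$ cannot be a single arc, so $l(W)>1$; combined with $l(W)\equiv 1\pmod g$ and $g\geq 2$ this gives $l(W)\geq g+1$, whence $l(W'_0)=l(W_0)-l(W)+1\leq l(W_0)-g<l(W_0)$. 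Thus $W'_0$ has maximal weight but strictly smaller length than~$W_0$ inside $\walkslennode{g+1}{n}{\DM(g,n)-1,g}{Z_0}$, contradicting the minimality of length among maximal-weight walks that defines the twice optimality of the interesting walk~$W_0$. Therefore $a_{ij}<p(W)$. I expect the only delicate point to be this length bookkeeping, specifically securing $l(W)\geq g+1$ from the excluded-arc hypotheses; once that is in place the contradiction with twice optimality is immediate.
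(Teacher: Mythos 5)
Your proof is correct and takes essentially the same route as the paper's: assume $a_{ij}\geq p(W)$, replace the subwalk $W$ by the single arc $(i,j)$, observe via Property~B that the new walk stays in $\walkslennode{g+1}{n}{\DM(g,n)-1,g}{Z_0}$ with the same length modulo~$g$ and weight at least $p(W_0)$ but strictly smaller length, contradicting twice optimality. Your explicit bookkeeping — that the excluded arcs $j=i+1$ and $(n,1)$ are the only length-$1$ subwalks of $W_0$, forcing $l(W)\geq g+1$ and hence a strict length decrease — simply fills in a step the paper leaves implicit.
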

\begin{proof}
If $a_{i,j}\geq p(W)$ then replacing $W$ by $(i,j)$ in $W_0$ we get a walk with the same length
modulo $g$ as $W_0$, whose weight is not less
than $p(W_0)$ and whose length is strictly less than $\ell(W_0)$. This contradicts the fact that 
$W_0$ is twice optimal. 
\end{proof}

\begin{lemma}
\label{l:CSRineq}
\begin{itemize}
\item[1.] If $j\not\equiv_g(i+1)$ then $a_{i,j}<(CSR)_{i,j}[A_1]$ and 
$a_{i,j}<(CSR)_{i,j}[A_1\oplus B_1]$;
\item[2.] If $j\equiv_g(i+1)$ and $i$ or $j$ belong to $\{1,\ldots,g\}$ but $j\neq i+1$ and
$(i,j)\neq (g,1)$, then $a_{i,j}<(CSR)_{i,j}[A_1]$ and $a_{i,j}<(CSR)_{i,j}[A_1\oplus B_1]$;
\item[3.] The arcs $(i,i+1)$ for $1\leq i\leq g-1$ and $(g,1)$ are critical.
 \end{itemize}
\end{lemma}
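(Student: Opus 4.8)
The plan is to prove the three parts of Lemma~\ref{l:CSRineq} by combining the structural information about the interesting walk~$W_0$ (Proposition~\ref{p:W0}, which after renumbering gives $W_0=(g+1)\cdots n(1\cdots n)^g$) with the two auxiliary Lemmas~\ref{l:propertyA} and~\ref{l:propertyB}. The common strategy for Parts~1 and~2 is to locate, for a given arc $(i,j)$, a suitable subwalk $W\in\walkslen{i}{j}{1,g}[W_0]$ that has both Property~A and Property~B, and is not the trivial arc itself. Then Lemma~\ref{l:propertyB} yields $a_{ij}<p(W)$ and Lemma~\ref{l:propertyA} yields $p(W)\le (CSR)_{ij}[A]$, and since $\crit(A)=\crit(A_1)=\crit(A_1\oplus B_1)$ by the perturbation arguments, the critical-walk interpretation (Proposition~\ref{p:representation}) gives $(CSR)_{ij}[A]=(CSR)_{ij}[A_1]=(CSR)_{ij}[A_1\oplus B_1]$, so the desired strict inequality follows.

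The main work is to produce the required subwalk in each case using the explicit structure of $W_0$. For Part~1, since $j\not\equiv_g(i+1)$, I would argue that any subwalk realizing a step from $i$ to $j$ with length $\equiv 1\pmod g$ is forced to wind through at least one full Hamiltonian cycle $1\cdots n1$, hence through a critical node (Property~A), and that it properly contains the arc; the hypothesis $j\not\equiv_g(i+1)$ is precisely what guarantees that the short direct arc cannot have the correct length modulo~$g$, so a genuinely longer subwalk must be used. For Part~2, the assumption that $i$ or $j$ lies in $\{1,\dots,g\}$ (the node set of $Z_0$), excluding the critical-return arc $(g,1)$, lets me exhibit a subwalk that passes through $Z_0$ directly; here I would use the interlacing Lemma~\ref{l:interlace} to read off from $W_0$ an explicit subwalk from~$i$ to~$j$ of length $\equiv 1\pmod g$ visiting a node of $Z_0$, while checking it is not the bare arc (which is what the exclusion of $(g,1)$ ensures). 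In both parts the verification that the chosen $W$ has Property~B reduces to noting that replacing $W$ by $(i,j)$ leaves a walk still meeting $\{1,\dots,g\}$, which holds because $W_0$ visits each critical node $g\ge 2$ times by Lemma~\ref{l:leqg}.

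Part~3 is of a different nature: here I must show the arcs $(i,i+1)$ for $1\le i\le g-1$ and $(g,1)$ are critical, i.e.\ they carry the critical cycle $Z_0$. The plan is to use that $\crit(A)$ contains a unique cycle of length~$g$ (Proposition~\ref{p:critgraph}), that after renumbering the nodes of $Z_0$ are exactly $\{1,\dots,g\}$, and that $1\cdots n1$ is the maximal Hamiltonian cycle (Corollary~\ref{c:hamiltonian}); I would then argue that the critical cycle must be $1\cdots g1$ by showing any other $g$-cycle on $\{1,\dots,g\}$ either fails to be a cycle or contradicts the established structure of $W_0$, since the interlacing pattern of $W_0$ forces the critical nodes to appear in the cyclic order $1,2,\dots,g$ consistent with the Hamiltonian order.

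The hard part will be Part~1, and specifically the careful length-modulo-$g$ bookkeeping needed to guarantee that a Property-A subwalk of the correct length class always exists and properly contains the arc $(i,j)$. The delicate point is when $(i,j)$ already has length $1\bmod g$ in the trivial sense but $j\not\equiv_g(i+1)$ rules this out—so I expect the argument to hinge on a clean case analysis according to where $i$ and $j$ sit relative to $Z_0$ and the tail, using the position encoding introduced before the lemma together with Lemma~\ref{l:HA} to adjust lengths by inserting copies of $Z_0$ without leaving the correct residue class. Once the correct subwalk is identified in each configuration, the inequalities themselves are immediate from Lemmas~\ref{l:propertyA} and~\ref{l:propertyB}.
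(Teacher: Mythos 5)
Your skeleton for Parts~1 and~2 is the paper's: pick occurrences of~$i$ and~$j$ in $W_0$ whose positions differ by $1$ modulo~$g$, check Properties~A and~B, and combine Lemmas~\ref{l:propertyA} and~\ref{l:propertyB} to get $a_{ij}<(CSR)_{ij}[A]$. But your final transfer step --- ``$\crit(A)=\crit(A_1)=\crit(A_1\oplus B_1)$ by the perturbation arguments'' followed by the claimed equality $(CSR)_{ij}[A]=(CSR)_{ij}[A_1]=(CSR)_{ij}[A_1\oplus B_1]$ --- is circular. Lemma~\ref{l:perturbation} has $A_2<CSR[A_1]$ as a hypothesis and Lemma~\ref{l:CSRA=CSRAB} needs Condition~3 of Theorem~\ref{t:mainres}; at this point of the necessity proof neither is available, since Condition~2 is precisely what Lemma~\ref{l:CSRineq} is used to establish and Condition~3 comes only afterwards from Lemma~\ref{l:A1}. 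The paper closes this loop differently and non-circularly: the exhibited subwalks of $W_0$ use only arcs of $\digr(A_1)$, and once Part~3 is proved (from Parts~1 and~2) the cycle $1\cdots g1$ is known to be critical, whence $\lambda(A_1)=\lambda(A_1\oplus B_1)=\1$ and $Z_0$ is critical in both auxiliary matrices; then the argument of Lemma~\ref{l:propertyA} applies verbatim with $A$ replaced by $A_1$ or $A_1\oplus B_1$, giving $p(W)\leq (CSR)_{ij}[A_1]$ and $p(W)\leq (CSR)_{ij}[A_1\oplus B_1]$ directly, with no need for (and no access to) equality of the CSR terms of $A$, $A_1$ and $A_1\oplus B_1$.

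Part~3 as you sketch it also does not go through: Lemma~\ref{l:interlace} only constrains how occurrences of critical and non-critical nodes alternate in $W_0$ and says nothing about which arcs among $\{1,\ldots,g\}$ the critical cycle uses; Proposition~\ref{p:W0} fixes the node set of $Z_0$ and the order in which $W_0$ traverses $1,\ldots,g$, but a priori $Z_0$ could run through these nodes along other arcs (the paper explicitly warns that $Z_0=(1,\cdots,g)$ is not yet known at this stage). The correct derivation is by elimination from Parts~1 and~2: critical arcs satisfy $a_{ij}=(CSR)_{ij}[A]$, so the strict inequalities rule out every arc touching $\{1,\ldots,g\}$ except $(i,i+1)$ for $1\leq i\leq g-1$ and $(g,1)$, and a cycle of length~$g$ on these $g$ nodes must then consist of exactly those arcs. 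Two smaller corrections: in Part~1 the direct arc always has length $1\equiv_g 1$, so the role of $j\not\equiv_g(i+1)$ is not to exclude that residue but to force the occurrence of $j$ at position difference $\equiv_g 1$ after the chosen occurrence of $i$ into a later period of $W_0$ (using that $n$ and $g$ are coprime, already available from Corollary~\ref{c:coprime}), which yields both Property~A and a subwalk longer than the bare arc; and your suggestion to adjust residues by inserting copies of $Z_0$ is not admissible here, since Lemma~\ref{l:propertyB} requires $W$ to be a genuine subwalk of $W_0$.
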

\begin{proof}
 We will examine the existence of walks with Properties A and B in the cases of our interest,
 and apply Lemmas~\ref{l:propertyA} and~\ref{l:propertyB} .

\begin{figure}
\centering
\begin{tikzpicture}[>=latex']
\node at (1,1) {Case 1};

\node (n1) at (0,0) {};
\node[circle,very thick,draw,minimum size=9mm,red] (nj1) at (2.4,0) {$1$};
\node[circle,very thick,draw,minimum size=9mm] (nk1) at (4.8,0) {$i$};
\node[circle,very thick,draw,minimum size=9mm,red] (nj2) at (7.2,0) {$1$};
\node[circle,very thick,draw,minimum size=9mm] (nk2) at (9.6,0) {$j$};
\node (nf) at (12,0) {};

\node at (2.4,-0.7) {$\in W_0'$};
\node at (4.8,-0.7) {$N^b_i$};
\node at (7.2,-0.7) {$\in W$};
\node at (9.6,-0.7) {$N^e_j$};

\draw[->,very thick] (n1) -- (nj1);
\draw[->,very thick] (nj1) -- (nk1);
\draw[->,very thick] (nk1) -- (nj2);
\draw[->,very thick] (nj2) -- (nk2);
\draw[->,very thick] (nk2) -- (nf);

\draw [very thick,decorate,decoration={brace,amplitude=10pt}] (4.8,0.7) --
(9.6,0.7) node [midway,above=0.4cm] {$W$};
\end{tikzpicture}
\begin{tikzpicture}[>=latex']
\node at (1,1) {Case 2.1};

\node (n1) at (0,0) {};
\node[circle,very thick,draw,minimum size=9mm,red] (nj1) at (2.4,0) {$1$};
\node[circle,very thick,draw,minimum size=9mm,red] (nk1) at (4.8,0) {$i$};
\node[circle,very thick,draw,minimum size=9mm] (nj2) at (7.2,0) {$j$};
\node[circle,very thick,draw,minimum size=9mm,red] (nk2) at (9.6,0) {$1$};
\node (nf) at (12,0) {};

\node at (4.8,-0.7) {$N^b_i$};
\node at (4.8,-1.2) {$\in W_0' \cap W$};
\node at (7.2,-0.7) {$N^e_j$};

\draw[->,very thick] (n1) -- (nj1);
\draw[->,very thick,red] (nj1) -- (nk1);
\draw[->,very thick] (nk1) -- (nj2);
\draw[->,very thick] (nj2) -- (nk2);
\draw[->,very thick] (nk2) -- (nf);

\draw [very thick,decorate,decoration={brace,amplitude=10pt}] (4.8,0.7) --
(7.2,0.7) node [midway,above=0.4cm] {$W$};
\end{tikzpicture}
\begin{tikzpicture}[>=latex']
\node at (1,1) {Case 2.2};

\node (n1) at (0,0) {};
\node[circle,very thick,draw,minimum size=9mm] (nj1) at (2.4,0) {$i$};
\node[circle,very thick,draw,minimum size=9mm,red] (nk1) at (4.8,0) {$1$};
\node[circle,very thick,draw,minimum size=9mm,red] (nj2) at (7.2,0) {$1$};
\node[circle,very thick,draw,minimum size=9mm,red] (nk2) at (9.6,0) {$j$};
\node[circle,very thick,draw,minimum size=9mm] (nf) at (12,0) {$n$};

\node at (2.4,-0.7) {$N^b_i$};
\node at (9.6,-1.2) {$\in W_0' \cap W$};
\node at (9.6,-0.7) {$N^e_j$};

\draw[->,very thick] (n1) -- (nj1);
\draw[->,very thick] (nj1) -- (nk1);
\draw[->,very thick] (nk1) -- (nj2);
\draw[->,very thick,red] (nj2) -- (nk2);
\draw[->,very thick] (nk2) -- (nf);

\draw [very thick,decorate,decoration={brace,amplitude=10pt}] (2.4,0.7) --
(9.6,0.7) node [midway,above=0.4cm] {$W$};
\end{tikzpicture}
\caption{Cases 1, 2.1, and 2.2}
\label{f:necessity}
\end{figure}

1:  Examine the case $j\not\equiv_g (i+1)$ . We take the occurrence of $i$ in the first period, that is, $N_i^b=i$. Then, since  $j\not\equiv_g (i+1)$, the (unique) occurrence of $j$ for which $N_j^e-i\equiv_g 1$ and $N_j^e>i$ exists in some other period.
They have both Property~A 
and Property~B, and  Lemmas~\ref{l:propertyA} and~\ref{l:propertyB} imply that 
$a_{i,j}<(CSR)_{i,j}[A]$. See Figure~\ref{f:necessity} for this case as well as cases 2.1 and 2.2 
described below.

2: We need to examine the following two cases: 2.1 $j\equiv_g (i+1)$, $j>i$ and $i\leq g$ and
2.2 $j\equiv_g (i+1)$, $j<i$, $i>g$, $j\leq g$.

{\em Case 2.1.}  We can take the occurrences of $i$ and $j$ in any period $k$ at positions 
$N_i^b=i+(k-1)n$ and $N_j^e=j+(k-1)n$. 

{\em Case 2.2.}  Take the occurrence of $i$ with $N_i^b=i-n$ and the occurrence of $j$ with 
$N_j^e=j+(g-1)n$ (in the last period). 

In both cases, the walks defined by these occurrences have both Property~A and
Property~B. 
By  Lemmas~\ref{l:propertyA} and~\ref{l:propertyB}, we have $a_{i,j}<(CSR)_{i,j}[A]$ if 
$j\neq i+1$.  Note that here and in 1. above we still have to argue that $A$ can be replaced with $A_1$ and $A_1\oplus B_1$. 

3:  If an arc $(i,j)$ is critical then $a_{i,j}=(CSR)_{i,j}[A]$. Hence if $a_{i,j}<(CSR)_{i,j}[A]$ then $(i,j)$ is non-critical.
As $\{1,\ldots,g\}$ are nodes of a critical cycle of length $g$ and parts 1. and 2. above imply that 
 $(i,i+1)$ for $1\leq i\leq g-1$ and $(g,1)$ 
are the only arcs between the first $g$ nodes that
can have $a_{i,j}=(CSR)_{i,j}[A]$, so these arcs are critical.   
 
Thus $1\dots g1$ is a critical cycle of~$A$ and $\lambda(A_1)=\lambda(A_1\oplus B_1)=\1$, and therefore 
$A$ can be replaced with $A_1$ and $A_1\oplus B_1$ 
first in statement and proof of Lemma~\ref{l:propertyA} and therefore also in the proofs of 1. and 2. above.
%
\end{proof}

Condition~\ref{CZO} of the theorem follows now from 
Lemma~\ref{l:CSRineq}~$3.$,
Condition~\ref{CA2} follows from 
Lemma~\ref{l:CSRineq}~$2.$, and Condition~\ref{CB1} is implied by the following Lemma.

\begin{lemma}
\label{l:A1}
If $j\equiv_g(i+1)$ and $j>i>g$ then $a_{i,j}<(A_1)_{i,j}^{j-i}$.
\end{lemma}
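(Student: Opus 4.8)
The plan is to apply the Property~B machinery of Lemma~\ref{l:propertyB} to one very specific subwalk of the interesting walk~$W_0$, namely the forward path $P=i(i+1)\cdots j$ running along the Hamiltonian cycle. First I would record that, after the renumbering of Proposition~\ref{p:W0}, $W_0$ consists (apart from its tail) of copies of the period $1\,2\cdots n$, and that since $g<i<j\le n$ the nodes $i,i+1,\ldots,j$ all lie in $\{g+1,\ldots,n\}$ and therefore occur consecutively, and in this order, inside any such period. Hence $P$ genuinely appears as a subwalk of $W_0$. Its length is $j-i$, and the hypothesis $j\equiv_g i+1$ gives $j-i\equiv 1\pmod g$, so $P\in\walkslen{i}{j}{1,g}[W_0]$.

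Next I would verify that $P$ has Property~B. All nodes of $P$ lie in $\{i,\ldots,j\}\subseteq\{g+1,\ldots,n\}$, so $P$ contains no critical node; consequently, replacing the subwalk $P$ inside $W_0$ by the single arc $(i,j)$ deletes only non-critical interior nodes and leaves untouched the many occurrences of the nodes $1,\ldots,g$ guaranteed by Lemma~\ref{l:leqg}. The modified walk therefore still visits a node of $\{1,\ldots,g\}$, which is precisely Property~B. Since the statement is substantive only when $(i,j)$ is a proper chord, I may assume $j>i+1$ (for $j=i+1$ the arc is itself a Hamiltonian arc of~$A_1$ and $(A_1)_{ij}^{j-i}=a_{ij}$, so nothing is claimed, and this is exactly the case $j>i+1$ needed for Condition~3). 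Then $j\neq i+1$ and, because $j>1$, also $(i,j)\neq(n,1)$, so Lemma~\ref{l:propertyB} applies and yields $a_{ij}<p(P)$.

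It then remains to identify $p(P)$ with $(A_1)_{ij}^{j-i}$. Here I would argue that $P$ is the unique walk of length exactly $j-i$ from $i$ to $j$ in $\digr(A_1)$: every arc of $A_1$ raises the node index by one, except the two return arcs $n\to 1$ and $g\to 1$, and using either of these forces a strictly negative net index change that cannot be recovered within the budget of only $j-i$ steps. Along $P$ the matrices $A_1$ and $A$ coincide on the relevant forward arcs, so $p(P)=\sum_{k=i}^{j-1}a_{k,k+1}=(A_1)_{ij}^{j-i}$, and combining this with the previous paragraph gives $a_{ij}<(A_1)_{ij}^{j-i}$.

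I expect the only delicate point to be this last identification: confirming that no detour through the return arcs $n\to1$ or $g\to1$ can yield another walk of length exactly $j-i$ from $i$ to $j$, so that the max-plus power entry equals the weight of the single forward path. Everything else is a direct consequence of the twice-optimality of~$W_0$ packaged in Lemma~\ref{l:propertyB}, together with the bookkeeping that places $P$ inside a single period of~$W_0$.
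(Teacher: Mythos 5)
Your proof is correct and essentially identical to the paper's: the paper likewise applies Lemma~\ref{l:propertyB} to the forward path $i(i+1)\cdots j$ viewed as a subwalk of $W_0$ (taken at positions $i-n$ to $j-n$ in the tail rather than inside a period, an immaterial difference since Property~B holds either way) and then identifies its weight with $(A_1)_{ij}^{j-i}$ by uniqueness of the length-$(j-i)$ walk from $i$ to $j$ in $\digr(A_1)$. You merely spell out two points the paper leaves implicit, namely the index-counting argument for that uniqueness and the exclusion of the degenerate case $j=i+1$, which the paper also sidesteps by restricting Lemma~\ref{l:propertyB} to $j\neq i+1$.
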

\begin{proof}
We can take the occurrences of $i$ and $j$ 
at positions $i-n$ and $j-n$.  
The resulting walk has 
Property~B. Hence by Lemma~\ref{l:propertyB} we have $a_{i,j}<p(W)$ for $j\neq i+1$. As $W$ is also a unique (and hence optimal) walk on 
$\digr(A_1)$ from $i$ to $j$ and having length $j-i$ we obtain that $p(W)=(A_1)_{i,j}^{j-i}$.
\end{proof}

It remains to obtain Condition~\ref{CB2}. Since $T_1(A)=\DM(g,n)$, we have 
 $(A^{\DM(g,n)-1})_{g+1,n}<(CS^{\DM(g,n)-1}R)_{g+1,n}[A]$,\\ and hence
$(B_1^{\DM(g,n)-1})_{g+1,n}< (CS^{\DM(g,n)-1}R)_{g+1,n}[A]$.

Let us argue that $CS^tR[A]=CS^tR[A_1]$ for all $t$. Indeed, the equality
$CS^tR[A]=CS^tR[A_1\oplus B_1]$ follows from Lemma~\ref{l:perturbation}
since $A_2<CSR[A_1\oplus B_1]$ by Lemma~\ref{l:CSRineq},
and the equality $CS^tR[A_1\oplus B_1]=CS^tR[A_1]$, follows from 
Lemma~\ref{l:CSRA=CSRAB}. We also have $CS^{\DM(g,n)-1}R[A_1]=CS^{n-1}R[A_1]$ since
$\lambda(A_1)=\1$ and $\DM(g,n)\equiv_g n$.

The remaining uniqueness statements have been proved in Remark~\ref{r:W0}.

\section{Matrices Attaining the Wielandt Bound}\label{sec:wiel}

This section is devoted to the proof of Theorem~\ref{t:wiel}.

If $\wiel(n)>\DM(g,n)$, then the Wielandt bound cannot be attained. Hence we are only 
interested in the case when $\wiel(n)\leq\DM(g,n)$.
Observe that $\wiel(n)=\DM(n-1,n)$, and therefore 

$$\DM(g,n)\geq\wiel(n)\Leftrightarrow \DM(g,n)\geq \DM(n-1,n)\Leftrightarrow g\geq n-1$$ 
for any $n\geq 2$. 

Thus we have two cases: $g=n-1$ and $g=n$.

{\em In case $g=n-1$}, we have $\DM(n-1,n)=\wiel(n)$, and observe that
Conditions~\ref{Cgnprime}, \ref{CB1} and~\ref{CB2} of Theorem~\ref{t:mainres}
trivially hold, in view of Remark~\ref{r:n<2g}. Next, both sufficiency and necessity as well as the
last part of the statement, for $g=n-1$,
follow as a special case of the corresponding claims in Theorem~\ref{t:mainres}. 

{\em In case $g=n$}, let us prove the sufficiency and necessity of Condition~2.

{\em Sufficiency:} By Lemma~\ref{l:perturbation}, Condition~2 implies that
$T_1(A)=T_1(A_1)$, so it suffices
to show that $T_1(A_1)=\wiel(n)$. For this, note that for $\tilde{A}_1$ with entries defined by
\begin{equation}
\label{e:tildeA}
(\tilde{A}_1)_{i,j}=
\begin{cases}
\bunity, &\text{if $(A_1)_{i,j}\neq \0$},\\
\bzero, &\text{otherwise.}
\end{cases}
\end{equation}
(Wielandt's example, see e.g.~\cite{BR}) 
we have $T(\tilde{A}_1)=\wiel(n)$, meaning that there exist $i,j$ such that 
$(\tilde{A}_1)^{\wiel(n)-1}_{i,j}=\0$. This implies that also 
\mbox{$(A_1)^{\wiel(n)-1}_{i,j}=\0$} and hence $T_1(A_1)\geq \wiel(n)$ (recalling that
$T_1(A_1)=T(A_1)$). However we also have $T_1(A_1)\leq\wiel(n)$ and hence 
$T_1(A_1)=\wiel(n)$.

\medskip
  
{\em Necessity:}   
To prove necessity we will need the following result, which is analogous to
Proposition~\ref{p:W0}. Since the conditions are invariant under scalar multiplication,
we will assume $\lambda(A)=\1$ in the rest of this section.

\begin{proposition}
\label{p:wielwalk}
Let $A\in\Rpnn$ for $n\geq 1$  and $g=n$. 
If $A$ has $T_1(A)=\wiel(n)$, then
there exists a unique twice optimal walk of 
length $\wiel(n)+n-1$. It is of the form
\begin{equation}
\label{e:wielwalk}
W_0=n(1 \dots (n-1))^{n-1} 1\dots n,
\end{equation}
where, after appropriate renumbering,  $(i,i+1)$ for $1\leq i< n$ and $(n,1)$
are the arcs of the unique critical cycle.
\end{proposition}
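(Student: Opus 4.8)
The plan is to treat Proposition~\ref{p:wielwalk} as the $g=n$ analogue of Proposition~\ref{p:W0} and to reuse its architecture; the essential difference is that $g=\g(\crit(A))=n$ forces \emph{every} node of $\digr(A)$ to be critical, so the critical/non-critical interlacing lemmas underlying Proposition~\ref{p:W0} are unavailable and must be replaced by an analysis based on the arithmetic of the two cycle lengths $n$ and $n-1$. Throughout I assume, as stated, $\lambda(A)=\1$, and after a visualization scaling I may assume every arc has weight $\leq\1$ with critical arcs of weight exactly $\1$. I would first pin down $\crit(A)$: a critical cycle has length $n$, hence is Hamiltonian and already spans all nodes, so $\crit(A)$ is strongly connected; any critical arc outside this cycle would, together with the complementary path along $Z_0$, close up a critical cycle of length $<n$, contradicting $\g(\crit(A))=n$. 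Thus $\crit(A)=Z_0$ is a unique Hamiltonian cycle, every other cycle has weight $<\1$, and (since $B=\bnacht=\0$ here) $T_1(A)=T(A)$.

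Next I would establish existence and the exact length of the twice optimal walk. Exactly as in Proposition~\ref{p:twiceopt}, $T_1(A)=\wiel(n)$ yields a pair $(i,j)$ with $(A^{\wiel(n)-1})_{ij}<(CS^{\wiel(n)-1}R)_{ij}$ and a twice optimal walk $W_0\in\walkslennode{i}{j}{t,g}{Z_0}$ with $t=\wiel(n)-1$. Inserting copies of $Z_0$ (each of length $n=g$ and weight $\1$) into an optimal walk shows that $W_0$ cannot have length $\leq\wiel(n)-1$; since $\wiel(n)-1=(n-1)^2\equiv 1\pmod n$, the next admissible length is $\wiel(n)+n-1$, so $\ell(W_0)\geq\wiel(n)+n-1$. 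The matching upper bound I would obtain from the structural count of the next step rather than from the generic cycle-removal threshold, which only yields the weaker walk-length bound $n^2-1$ (equivalently the transient bound $\DM(n,n)$).

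The crux is the cycle structure. Decomposing the closed part of $W_0$ into cycles, twice optimality together with Lemma~\ref{l:HA} forbids deleting---while keeping $W_0$ a valid walk from $i$ to $j$ and keeping its length $\equiv\ell(W_0)\pmod n$---any subcollection of cycles of total length $\equiv 0\pmod n$. Because $\ell(W_0)\equiv 1\pmod n$ while $Z_0$ has length $\equiv 0$, $W_0$ must use a cycle $C$ of length $c\not\equiv 0\pmod n$; solvability of $kc\equiv 1\pmod n$ forces $\gcd(c,n)=1$, and minimality of $\ell(W_0)$ together with the lower bound forces $c=n-1$, used exactly $n-1$ times (the least $k$ with $k(n-1)\equiv 1\pmod n$). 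I would then single out the unique node $v$ lying off $C$ and show, by the same removal argument, that $v$ occurs exactly twice in $W_0$ (a third occurrence would create a removable closed subwalk of length $\equiv 0\pmod n$ through $v$), whereas every node of $C$ occurs exactly $n$ times. Summing occurrences gives $\ell(W_0)\leq 1+(n-1)n=\wiel(n)+n-1$, closing the length estimate, and exhibits $W_0$ as one Hamiltonian cycle through $v$ joined with $n-1$ copies of $C$; in particular $i=j=v$. Renumbering so that $v=n$ and $C=1\cdots(n-1)1$ produces exactly~\eqref{e:wielwalk}.

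Finally, for uniqueness I would apply the same analysis to $\digr(\trans A)$, whose interesting walk is $\trans{W_0}$ (using $(\trans A)^m=\trans{(A^m)}$), to fix the order of the nodes on each side of $v$; this shows the cyclic order is that of the unique maximal-weight Hamiltonian cycle, so both $W_0$ and the renumbering are unique. I expect the main obstacle to be the third paragraph: with no non-critical nodes to interlace, the exact cycle content (one Hamiltonian cycle and $n-1$ copies of a single $(n-1)$-cycle) must be extracted purely from minimality of $\ell(W_0)$ and the coprimality of $n$ and $n-1$, and one must carefully track which cycles may be removed without disconnecting $W_0$ from its endpoint $v$.
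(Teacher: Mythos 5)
Your opening reductions are sound and match the paper: $\crit(A)$ is a unique critical Hamiltonian cycle $Z_0$ (so all nodes are critical and $\bnacht=\0$), the pair $(i,j)$ with $(A^{\wiel(n)-1})_{ij}<(CS^{\wiel(n)-1}R)_{ij}$ exists as in Proposition~\ref{p:twiceopt}, and since $\wiel(n)-1=(n-1)^2\equiv 1\pmod n$ the twice optimal walk satisfies $\ell(W_0)\ge\wiel(n)+n-1$. The genuine gap is in your third paragraph, which is where the entire content of the proposition lies. From a bare cycle decomposition of $W_0$ you cannot conclude that a \emph{single} cycle length $c$ with $kc\equiv 1\pmod n$ must account for the residue: the closed part may mix several lengths (for $n=5$, cycles of lengths $2$ and $4$ already give total $\equiv 1\pmod 5$ with no removable subcollection summing to $\equiv 0$), so the step ``solvability of $kc\equiv 1\pmod n$ forces $\gcd(c,n)=1$, and minimality forces $c=n-1$ used exactly $n-1$ times'' is an assertion of the conclusion, not a derivation. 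Your occurrence counts are likewise circular: the claim that a third occurrence of $v$ ``would create a removable closed subwalk of length $\equiv 0\pmod n$'' needs either at least $n$ closed walks based at $v$ (to invoke Lemma~\ref{l:HA}) or prior knowledge of the cycle lengths --- with only two closed walks at $v$ nothing forces a removable subset --- yet your matching upper bound on $\ell(W_0)$ is supposed to come from these very counts. (The length bound itself is salvageable without any structure: Lemma~\ref{l:HA} applied at a node occurring $\ge n+1$ times shows every node occurs at most $n$ times, so $\ell(W_0)\le n^2-1$, and $\ell(W_0)\equiv 1\pmod n$ then pins $\ell(W_0)=\wiel(n)+n-1$; but this counting still admits many occurrence distributions, e.g.\ the deficit $n-2$ spread over several nodes, and does not yield the form~\eqref{e:wielwalk}.)

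What the paper does at exactly this point, and what your sketch lacks, is a canonicalization of the decomposition: insert a copy of $Z_0$ after each of the $m$ noncritical arcs $i_\alpha j_\alpha$ of the walk, so that the multiset of arcs splits as a critical path $P$, cycles $C_\alpha$ each containing \emph{exactly one} noncritical arc (a shortcut completed by critical arcs of $Z_0$), and copies of $Z_0$. Twice optimality plus Lemma~\ref{l:HA} bounds $m\le n-1$, and then a connectivity case analysis on $M(P)\cup\bigcup_{\alpha}M(C_\alpha)$ --- connected (ruled out by a length count), a disconnected loop component (ruled out similarly), or $l(P)=0$ --- forces $i=j$, all cycles not through $i$ of length exactly $n-1$, each consisting of a single $1$-shortcut bypassing $i=j$; since there is only one $1$-shortcut bypassing a given node, all these cycles coincide, giving both the form~\eqref{e:wielwalk} and uniqueness (so your transpose argument is not needed). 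You correctly flag that removability of cycles is obstructed by disconnection, but you offer no mechanism to resolve it; the shortcut-insertion device is that mechanism, and without it the arithmetic of $n$ and $n-1$ alone cannot determine the cycle content of $W_0$.
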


\begin{proof}
This is an improvement on the proof of Theorem~\ref{t:bounds} in~\cite{MNS} with extra care on the extremal cases.

Let $Z_0$ be a critical cycle, which is of length~$n$.
This is the only critical cycle 
(up to choice of its first node) 
as any extra arc would lead to a shorter cycle.

Let $W$ be a twice optimal walk.
Denote its first node by~$i$ and its last node by~$j$.
Let $i_{\alpha}j_{\alpha}$, for $\alpha=1,\ldots,m$ be the noncritical 
arcs of $W$. 
For every such arc of $W$, insert a copy of $Z_0$ as 
a walk beginning and ending at $j_{\alpha}$. Denote the resulting walk by $W'$. 
Observe that for each noncritical arc $i_{\alpha}j_{\alpha}$ we can detect the
node $i_{\alpha}$ occurring in the subsequent copy of $Z_0$. This gives rise to a cycle $C_{\alpha}$ consisting
of a shortcut $i_{\alpha}j_{\alpha}$ and a number of critical arcs. Thus we obtain the following
decomposition in term of the multiset of its arcs:
$$
M(W')=M(P) \cup \bigcup_{\alpha=1}^m M(C_{\alpha}) \cup \bigcup_{\beta=1}^k M(Z_\beta) \enspace,
$$
where $M(V)$ denotes the multiset of arcs of a walk~$V$, $m$ is the number of noncritical arcs in the original walk, the~$Z_\beta$ are critical cycles, and $P$ is a critical path from $i$ to $j$.
Since~$Z_0$ is the only critical cycle, we have $Z_\beta = Z_0$ for all~$\beta$.
We can remove $k-1$ copies of  $Z_0$ and get an optimal walk in $\walkslen{i}{j}{1,n}$ of smaller length.  
Denote the resulting walk by $W''$.

Further, since $W''$ has the largest weight in $\walkslen{i}{j}{1,n}$, one cannot remove cycles from it maintaining the length modulo $n$,
and hence $m\leq n-1$ by Lemma~\ref{l:HA}. We distinguish two cases:

1) $M(P)\cup \bigcup_{\alpha=1}^m M(C_{\alpha})$ is connected, in which case also the $k$\textsuperscript{th} copy of $Z_0$ 
can be removed. The length of the resulting walk is bounded by $(n-1)+(n-1)^2< (n-1)^2+n$. 
Thus, $T_1(A) < \wiel(n)$, a contradiction, which shows that this case is impossible.

2) $M(P)\cup \bigcup_{\alpha=1}^m M(C_{\alpha})$ is disconnected. 
Then we cannot remove $Z_0$ from~$W''$. However, in this
case there is a cycle $\Hat{C}$ such that $l(\Hat{C})+l(P)\leq n-1$. Therefore the length of $W''$ is bounded by 
$n+(n-1)+(n-2)(n-1)=(n-1)^2+n$. 

Further, in case~2, the length of all cycles~$C_\alpha$ is bounded by $n-2$, unless one of the connected
components of $M(P)\cup\bigcup_{\alpha=1}^m M(C_{\alpha})$ has size $1$. In the former case the length is bounded by 
$n+(n-1)+(n-2)^2<(n-1)^2+n$, which is again impossible. It thus remains to treat two
subcases:\\
2a) There is a loop (or possibly, several copies of the same loop)
disconnected from $P$, and the rest of the cycles of $M(P)\cup \bigcup_{\alpha=1}^m 
M(C_{\alpha})$ connected to $P$;\\
2b) $l(P)=0$ and there are $n-1$ cycles of length at most $n-1$ disconnected from $P$.

In  subcase 2a, the length of all cycles is bounded by $n-2$, since any cycle of length $n-1$
could be combined with the loop and removed. In this case, the length of the walk is again bounded by 
$n+(n-1)+(n-2)^2<(n-1)^2+n$, which is impossible.  

In subcase 2b, we have $i=j$.  Length $l(W'')$ can reach the length $(n-1)^2+n$ only if all cycles $C_{\alpha}$, not
containing $i=j$, are of length $n-1$. However, by construction,
every such cycle should contain just one noncritical arc, and this has to be a $1$-shortcut
bypassing $i=j$. However, there is only one $1$-shortcut bypassing $i=j$
which implies that all these cycles are identical and $W''$ has to be of the form~\eqref{e:wielwalk}, after renumbering the nodes in such a way 
that $Z_0=1\ldots n1$ and $i=n$.
\end{proof}

As in the proof of Theorem~\ref{t:mainres}, any occurrence of a node in $W_0$
of Proposition~\ref{p:wielwalk} can be encoded by its {\em position} in that
walk.
Node $n$ occurs there only twice. Its first occurrence has position 
$0$, and its second occurrence has position $(n-1)^2+n$. 
The $k$th occurrence of node $i$, for $1\leq i\leq n-1$ and $k=1,\ldots,n$ has position 
$i+(k-1)(n-1)$. 

A subwalk of length $1$ modulo $n$ from an occurrence of node $i$ at position $N^b_i$ to an occurrence of node  
$j$ at position $N^e_j$ exists if and only if $N^b_i<N^e_j$ and $N^e_j-N^b_i\equiv_n 1$.

We will need the following observation:
\begin{lemma}
\label{l:CSRA1}
If there are occurrences $N^b_i$ of node $i$ and $N^e_j$ of node $j$ such that the subwalk from 
$N^b_i$ to $N^e_j$
has length $1$ modulo $n$  and $N^e_j-N^b_i>1$, then $a_{i,j}<(CSR[A_1])_{i,j}$.
\end{lemma}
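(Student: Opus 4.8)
The plan is to squeeze $a_{ij}$ strictly beneath $(CSR[A_1])_{ij}$ by using the very subwalk of $W_0$ that witnesses the hypothesis. Let $W$ be the subwalk of $W_0$ running from the prescribed occurrence of $i$ at position $N^b_i$ to the prescribed occurrence of $j$ at position $N^e_j$; by hypothesis $l(W)=N^e_j-N^b_i\equiv_n 1$ and $l(W)>1$. I will establish the chain $a_{ij}<p(W)\le (CSR[A_1])_{ij}$, from which the lemma is immediate.

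For the left inequality I would invoke twice optimality of $W_0$ from Proposition~\ref{p:wielwalk}. If $a_{ij}=\0$ there is nothing to prove, since $W$ uses only finitely weighted arcs and so $p(W)>\0$. Otherwise the arc $(i,j)$ exists, and I replace the infix $W$ inside $W_0$ by this single arc, producing a walk $W_0'$ from $n$ to $n$ of length $l(W_0)-l(W)+1$. Because $l(W)\equiv_n 1$ this length is still congruent to $l(W_0)$ modulo $n$, and because $l(W)>1$ it is \emph{strictly} smaller than $l(W_0)$; moreover $W_0'$ trivially meets $Z_0$, as every node is critical here. Were $a_{ij}\ge p(W)$, then $p(W_0')\ge p(W_0)$, so $W_0'$ would be a strictly shorter walk of maximal weight in the same start–end and length class, contradicting the minimal-length requirement of twice optimality. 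Hence $a_{ij}<p(W)$.

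For the right inequality the crucial observation is that every arc occurring in $W_0=n(1\cdots(n-1))^{n-1}1\cdots n$---namely $n\to 1$, the arcs $i\to i+1$ for $1\le i\le n-1$, and the inter-block arc $(n-1)\to 1$---is an arc of $\digr(A_1)$ (recall $A_1$ is built with $g=n-1$, so it carries exactly $(i,i+1)$, $(n,1)$ and $(n-1,1)$). Thus $W$ is a walk on $\digr(A_1)$, and since $A_1$ agrees with $A$ on each of these arcs, the weight of $W$ is unchanged when computed in $A_1$. It then remains to apply the optimal walk interpretation of Proposition~\ref{p:CSRprops} part~\eqref{CSR-walks} to $A_1$ with modulus $\gamma=n$. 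To license this I first pin down the critical structure of $A_1$: since $A_1\le A$, every cycle of $\digr(A_1)$ has mean at most $\lambda(A)=\1$, while $Z_0=1\cdots n1$ is a cycle of $\digr(A_1)$ of mean $\1$; hence $\lambda(A_1)=\1$ and $Z_0\subseteq\crit(A_1)$. In fact $\crit(A_1)=Z_0$, because the only other cycle of $\digr(A_1)$ is $1\cdots(n-1)1$, which uses the arc $(n-1,1)$ that is noncritical in $A$ and therefore has mean strictly below $\1$. Consequently $(CSR[A_1])_{ij}=p\bigl(\walkslennode{i}{j}{1,n}{Z_0}[A_1]\bigr)$, and since $W$ lies in this walk set (it runs from $i$ to $j$, has length $\equiv_n 1$, and meets $Z_0$), we obtain $p(W)\le (CSR[A_1])_{ij}$.

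The step demanding the most care is the passage from $A$ to $A_1$: one must verify both that $W_0$---and hence every subwalk $W$---uses only arcs present in $\digr(A_1)$, and that $\crit(A_1)$ is exactly the Hamiltonian cycle $Z_0$, so that the walk characterization of $CSR[A_1]$ is valid with the modulus $n$ and every nonempty walk automatically meets the critical graph. Once these structural facts are in place, the two inequalities combine to give $a_{ij}<p(W)\le(CSR[A_1])_{ij}$, completing the proof.
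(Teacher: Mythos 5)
Your proposal is correct and follows essentially the same route as the paper: strictness $a_{ij}<p(W)$ from twice optimality of $W_0$ by splicing in the arc $(i,j)$, and the comparison with $(CSR[A_1])_{ij}$ via the optimal-walk interpretation, noting that $W$ lives on $\digr(A_1)$ and that all nodes are critical. The only (harmless) divergence is that the paper asserts the stronger equality $p(W)=(CSR[A_1])_{ij}$, whereas you prove just the inequality $p(W)\le(CSR[A_1])_{ij}$ --- which is all the lemma needs and spares you the paper's implicit optimality claim for $W$ within $\digr(A_1)$; your explicit checks that $\crit(A_1)=Z_0$ and that every arc of $W_0$ lies in $\digr(A_1)$ are sound.
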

\begin{proof}
Denote by $W$ the subwalk from $N^b_i$ to $N^e_j$.   This subwalk uses only the arcs of the digraph~$\digr(A_1)$,
where it is optimal among the walks with the same length modulo $n$, and goes through critical nodes (since all nodes 
are critical). Hence $p(W)=(CSR[A_1])_{i,j}$.  
If $W$ is replaced with $(i,j)$ then the resulting 
walk also goes through the critical nodes, and therefore we must have $a_{i,j}<p(W)$, for otherwise
$W_0$ is not twice optimal. Hence the claim. 
\end{proof}

Using Lemma~\ref{l:CSRA1}, for the necessity of Condition~2, it suffices to prove that the subwalks 
with length $1$ modulo $n$ but larger than $1$ exist for any $(i,j)$ except possibly for $(i,j)=(n-1,1)$, $(i,j)=(n,1)$, or 
$j=i+1$ and $i\in\{1,\ldots,n-1\}$, which are the arcs of $\digr(A_1)$.  

{\bf Case 1.} $i=j=n$. We have two occurrences of $n$ with $N^b_n=0$ and $N^e_n=(n-1)^2+n$.
As $(n-1)^2+n\equiv_n 1$, the walk with required properties exists. 

{\bf Case 2.} $i=n$ and $j\neq n$. In this case $N^b_n=0$ and we can choose $N^e_j=j+(k-1)(n-1)$ 
with $N^e_j\equiv_n 1$, since $k\in\{1,\ldots n\}$ and $n-1$ and $n$ are coprime.

{\bf Case 3.} $i\neq n$ and $j=n$. This case is symmetrical to case 2.

{\bf Case 4.} $i\neq n$, $j\neq n$, $(i,j)\neq (n-1,1)$, and $j\neq i+1$. 
Observe that $j\neq i+1$ is equivalent to $j\not\equiv_n (i+1)$ because $1\leq i \leq j\leq n$. \\
Take $N_b^i=i$ and $N_e^j=j+(k-1)(n-1)$ where $k\in\{1,\ldots,n\}$. 
Then $N_e^j-N_b^i=(j-i)+(k-1)(n-1)$. As $n$ and $n-1$ are coprime, there is a $k\in\{1,\ldots,n\}$
such that $(j-i)+(k-1)(n-1)\equiv_n 1$, and since $j\not\equiv_n i+1$ we have $k>1$ and hence 
$N_e^j-N_b^i> 1$. The case $N_e^j-N_b^i=1$ is only possible if $i=(n-1)$ and $j=1$, the case which was 
excluded.

\medskip

The uniqueness of the renumbering follows from the uniqueness of $W_0$.

\section{Matrices with Critical Columns Attaining the Bounds}\label{sec:crit}

This section is devoted to the proof of Theorems~\ref{t:CritColDM} and~\ref{t:CritColWiel}. As before, we assume that~$\lambda(A)=\1$.

Let us first notice that the transient of critical rows and columns is at most~$T_1(A)$ because if $i$ or $j$ is critical then $(CS^tR\oplus
B_N^t)_{i,j}=(CS^tR)_{i,j}$. Thus, if the transient of a critical row or column reaches the bound, so does~$T_1(A)$ and $A$~ belongs to the class defined by Theorem~\ref{t:mainres} (except if $\g(\crit(A))=1$) or~\ref{t:wiel}. 

\begin{proof}[Proof of Theorem~\ref{t:CritColDM}]
Let~$A$ be a matrix with~$\g(\crit(A))=g$ and a critical row~$i_0$ whose transient is $\DM(g,n)$ and show that the index of~$\crit(A)$ is~$\DM(g,n)$.
Then, the same is true for a critical column by transposition of the matrix and the converse of the theorem follows from~\cite{MNSS}[Lemma 8.2].
We also have $T_1(A)=\DM(g,n)$, by the above argument.

We assume without loss of generality that $A$~is strictly visualized.

By Proposition~\ref{p:critgraph}, $\crit(A)$ is strongly connected and contains a unique (up to choice of the starting node) cycle of length~$g$ denoted by~$Z_0$.

Since~$i_0$ is critical, the transient of row~$i_0$ reaches the bound means that there is a $j_0$ such
that $A_{i_0j_0}^{\DM(g,n)-1}<(CS^{n-1}R)_{i_0j_0}$ and by Proposition~\ref{p:twiceopt}, there is an interesting walk~$W_0$ from~$i_0$ to~$j_0$.

{\em If $g\ge 2$}, then $A$ satisfies the conditions of Theorem~\ref{t:mainres}. We assume without loss of generality that~$A$ has been
reordered as in the theorem.

By Proposition~\ref{p:W0}, we have~$(i_0,j_0)=(g+1,n)$, so that $g+1$ is critical.

It remains to understand~$\crit(A)$.
By Lemma~\ref{l:perturbation}, it is contained in~$\crit(A_1\oplus B_1)$.
Since~$A$ is visualized, Condition~\ref{CB1} ensures that it only contains entries of~$A_1$ and entries $ij$ with $g<j<i$,
so that the only possible critical cycle that would share a node with~$Z_0=1\cdots g1$ is $Z_1=1\cdots n1$.
Finally, since~$\crit(A)$ is strongly connected and contains $g+1$ and $Z_0$,~$Z_1$ is critical and we have $\digr(A_1)\subset\crit(A)$.
Since~$g$ and~$n$ are coprime, $\gamma(\crit(A))=1$.

{\em If $g=1$}, since~$i_0$ and~$Z_0$ are critical, and $\crit(A)$ is strongly connected, 
there is a walk from~$i_0$ to any node of~$Z_0$ with only critical arcs.
Since~$A$ is strictly visualized, this walk has weight~$0$ and by optimality of $W_0$, $W_0$ also uses only critical arcs from $i_0$ to $Z_0$.
But, by Lemma~\ref{l:interlace}, $W_0$ goes through every node not in~$Z_0$ before reaching~$Z_0$, so that all nodes are critical.

{\em In both cases}, $\crit(A)$ is strongly connected, has cyclicity~$1$ and contains all nodes.
Thus, the strict visualization ensures that $(CS^tR)_{i,j}=\1$ for all~$i,j$, and that $A_{i,j}=\1$ if and only if $(i,j)$ is a critical arc.
Therefore, we can redefine $A_1$ and $A_2$ by $(A_1)_{i,j}=A_{i,j}=\1$ and~$(A_2)_{i,j}=\0$ if $(i,j)$ is critical
and $(A_2)_{i,j}=A_{i,j}<\1$ and~$(A_1)_{i,j}=\0$ otherwise.
We have $A=A_1\oplus A_2$, $\lambda(A_1)=\1$ and $A_2<CSR[A_1]$,
so that Lemma~\ref{l:perturbation} ensures that the index of~$\crit(A)$ is $T(A_1)=T_1(A_1)=T_1(A)=\DM(g,n)$.

\end{proof}

\begin{proof}[Proof of Theorem~\ref{t:CritColWiel}]
Let us first check that the conditions are sufficient : $T_1(A)=T_1(A_1)$ by Lemma~\ref{l:perturbation}
and $T_1(A_1)$ is at least the index of~$\digr(A_1)$ and thus $T_1(A)=\wiel(n)$.
But, since all nodes are critical (because of the Hamiltonian critical cycle),
this means that there is a (critical) row (or column) whose transient is~$\wiel(n)$.

Conversely, let us assume there is a critical row (or column) whose transient is~$\wiel(n)$.
Since $T_1(A)=\wiel(n)$ the existence of~$A_1$ and $A_2$ with $A_2<CSR[A_1]$ follows from Theorem~\ref{t:wiel}
and we already noticed that~$\digr(A_1)$ has index~$\wiel(n)$.
It remains to show the existence of the Hamiltonian cycle.
By Lemma~\ref{l:perturbation}, we know that $\crit(A)=\crit(A_1)$, so that $g=\g(\crit(A))\in \{n-1,n\}.$

If~$g=n-1$, then $\DM(g,n)=\wiel(n)$ and we are in the situation of Theorem~\ref{t:CritColDM}.
Thus, $\crit(A_1)=\crit(A)$ has index~$\wiel(n)$, which is only possible if~$\crit(A)=\digr(A_1)$, that is if $A_1$ has a critical Hamiltonian cycle.

If~$g=n$, then there is an Hamiltonian cycle in~$\crit(A)$,
which is necessarily a critical Hamiltonian cycle of~$\digr(A_1)$ because~$\crit(A_1)=\crit(A)$.
\end{proof}

\section*{Acknowledgement} We are grateful to St\'{e}phane Gaubert and anonymous referees of the paper for many constructive and helpful suggestions and comments.

\providecommand{\bysame}{\leavevmode\hbox
to3em{\hrulefill}\thinspace}
\providecommand{\MR}{\relax\ifhmode\unskip\space\fi MR }
\providecommand{\MRhref}[2]{%
  \href{http://www.ams.org/mathscinet-getitem?mr=#1}{#2}
} \providecommand{\href}[2]{#2}

\end{document}